\newtheorem{them}{Theorem}[section]
\newtheorem{lem}{Lemma}[section]
\newtheorem{coro}{Corollary}[section]
\newtheorem{pro}{Proposition}[section]
\begin{document}

\title{Fully piecewise linear vector optimization problem
\footnotemark[1]}
\author{Xi Yin Zheng\footnotemark[2] \and Xiaoqi Yang\footnotemark[3]}
\maketitle
\begin{abstract}
We distinguish two  kinds of piecewise linear functions and provide an interesting representation for a piecewise linear function between two normed spaces. Based on such a representation, we study a fully piecewise linear vector optimization  (PLP) with the objective and constraint functions being piecewise linear. We divide (PLP) into some linear subproblems and structure a finite dimensional reduction method to solve (PLP). Under some mild assumptions, we prove that the Pareto (resp. weak Pareto) solution set of (PLP) is the union of finitely many generalized polyhedra (resp. polyhedra), each of which is contained in  a Pareto (resp. weak Pareto) face of some linear subproblem.
Our main results are even new in the linear case and  further generalize Arrow, Barankin and Blackwell's classical results on
linear vector optimization problems in the framework of finite dimensional spaces.
\end{abstract}
\renewcommand{\thefootnote}{\fnsymbol{footnote}}

\footnotetext[1]{This research was supported by  the National Natural Science
Foundation of P. R. China (Grant No. 11771384) and the Research Grants Council of Hong Kong (PolyU 152128/17E).}
\footnotetext[2]{Department of Mathematics, Yunnan University,
Kunming 650091, P. R. China (xyzheng@ynu.edu.cn).}
\footnotetext[3]{Department of Applied Mathematics, The Hong Kong Polytechnic University, Hong Kong, P. R. China (xiao.qi.yang@polyu.edu.hk).}

\noindent
{\bf Key words.} {\it Polyhedron, piecewise linear function, Pareto solution, weak Pareto solution.} \\
{\bf AMS subject classifications.} {\it 52B60, 52B70, 90C29}

\vskip1cm

\section{Introduction}
Though vector optimization is often  encountered in theory and practical application,  the study of nonlinear vector optimization is far from systemic (possibly because the vector ordering is  much more complicated than the scalar one).  On the other hand,  linear vector optimization has been well studied (cf. \cite{Ar,BS,GR,jahn,Luc,Luc2,Perez,Ze} and the references therein).  In particular,
in the finite-dimensional case,  Arrow, Barankin and Blackwell \cite{ABB}  established the structure of the Pareto solution set and  weak Pareto solution set of a linear vector optimization problem. However the linearity assumption is  quite restrictive  in both theory and application. To overcome the restriction of linearity, one sometimes adopts the piecewise linear functions (cf. \cite{grv,FMY, YY, Zh}). The family of all piecewise
linear functions is much larger than that of all linear functions and  there exists a   wide
class of functions that can be approximated by piecewise linear functions. Therefore, from the viewpoint of  theoretical interest as well as for applications,
it is important to study  piecewise linear problems. Given two normed spaces $X$ and $Y$,  the following piecewise linearity of a vector-valued function $f:X\rightarrow Y$ was adopted in the literature (cf. \cite{YY, ZY}): {\it there exist finitely many polyhedra $\Lambda_1,\cdots,\Lambda_m$ in the product $X\times Y$ such that}
\begin{equation}\label{pl1}
{\rm gph}(f):=\{(x,f(x)):\;x\in X\}=\bigcup\limits_{i=1}^m\Lambda_i.
\end{equation}
Throughout this paper, we will use $\mathcal{P}(Z)$ to denote the family of all polyhedra in a normed space $Z$.
Another kind of piecewise linearity for a function $f$ is as follows:  {\it there exist $T_i\in\mathcal{L}(X,Y)$, $P_i\in\mathcal{P}(X)$ and $b_i\in Y$ ($i=1,\cdots,m$) such that}
\begin{equation}\label{pl2}
X=\bigcup\limits_{i=1}^mP_i \;\;{\rm and}\;\;f(x)=T_i(x)+b_i\quad\forall x\in P_i,\;i=1,\cdots,m,
\end{equation}
where $\mathcal{L}(X,Y)$ denotes the space of all  continuous linear operators from $X$ to $Y$. For convenience, let $\mathcal{PL}_1(X,Y)$ (resp. $\mathcal{PL}(X,Y)$) denote the family of all piecewise linear functions from $X$ to $Y$ in the sense of (\ref{pl1}) (resp. (\ref{pl2})). It is clear that $\mathcal{L}(X,Y)$ is always contained in $\mathcal{PL}(X,Y)$; however if $Y$ is infinite dimensional then every linear operator in $\mathcal{L}(X,Y)$ must not be in $\mathcal{PL}_1(X,Y)$. This motivates us to study the relationship between $\mathcal{PL}_1(X,Y)$ and $\mathcal{PL}(X,Y)$. To do this, we first consider polyhedra in normed spaces. In Section 2, we provide several properties on
polyhedra in normed spaces. In particular, with the help of the notion of a prime generator group of a polyhedron (cf. \cite{BCL, Te, KTZ}), we establish some results on the  maximal faces of a polyhedron, which not only play a key role in the proof of the main theorem on piecewise linear functions but also should be valuable by themselves. In Section 3, using the results obtained in Section 2, we prove that
$$
{\rm dim}(Y)<\infty\Leftrightarrow \mathcal{PL}_1(X,Y)=\mathcal{PL}(X,Y)\;\;\;{\rm and}\;\;\;
{\rm dim}(Y)=\infty\Leftrightarrow \mathcal{PL}_1(X,Y)=\emptyset.
$$
As one of the mains results, we  prove by using the Fubibi theorem on Lebesgue's measure that for each $f\in\mathcal{PL}(X,Y)$ there exist two closed subspaces $X_1$ and $X_2$ of $X$,  a closed subspace $Y_2$ of $Y$, $T\in\mathcal{L}(X_1,Y)$ and  $g\in\mathcal{PL}_1(X_2,Y_2)$ such that  $X=X_1\oplus X_2$, ${\rm dim}(X_2)<\infty$, ${\rm dim}(Y_2)<\infty$  and
$$f(x_1+x_2)=Tx_1+g(x_2)\;\;\forall (x_1,x_2)\in X_1\times X_2.$$

In Section 4, we consider  a fully piecewise linear vector optimization  problem in the framework of general normed spaces. In the case when $f\in\mathcal{PL}(X,Y)$  and $\varphi_j\in\mathcal{PL}(X,\mathbb{R})$ ($j\in\overline{1l}:=\{1,\cdots,l\}$), we  study the structure of the (weak) Pareto solution set of the following fully piecewise linear vector optimization problem
$$C-{\rm Min}f(x)\;\;{\rm subject\;to}\;\varphi_j(x)\leq0,\;j=1,\cdots,l,\leqno{\rm (PLP)}$$
where $C$ is a closed convex cone in $Y$.
Let $\leq_C$ denote the preorder induced by $C$
in $Y$, that is,
for $y_1,y_2\in Y$, $y_1\leq_C y_2\Leftrightarrow y_2-y_1\in C$. When the interior ${\rm int}(C)$ of $C$ is nonempty,   $y_1<_Cy_2$ is defined as
$y_2-y_1\in{\rm int}(C)$.
For a subset $\Omega$ of $Y$ and a point $\omega$ in $\Omega$, we say that $\omega$ is a Pareto efficient point of $\Omega$ (with respect to $C$), denoted by $\omega\in E(\Omega,C)$, if there is no element $v\in \Omega\setminus\{\omega\}$ such that
$v\leq_C\omega$. In the case when ${\rm int}(C)\not=\emptyset$, we say that  $\omega$ is  a weak Pareto efficient point of $\Omega$, denoted by $\omega\in{\rm WE}(\Omega,C)$, if there is no element $v\in \Omega$
such that $v<_C\omega$.

Let $A$ denote the feasible set of fully piecewise linear vector optimization problem (PLP), that is,
\begin{equation}\label{Fe}
A:=\{x\in X:\;\varphi_1(x)\leq0,\cdots,\varphi_l(x)\leq0\}.
\end{equation}
We say that $\bar{x}\in A$  is  a Pareto (resp. weak Pareto) solution of (PLP) if  $f(\bar{x})\in {\rm E}(f(A),C)$
(resp. $f(\bar{x})\in{\rm WE}(f(A),C)$).

For each $i\in\overline{1m}$, let
\begin{equation}\label{wp2'}
A_i:=\{x\in P_i:\;\langle x_{ij}^*,x\rangle\leq c_{ij}\;\;\forall j\in\overline{1l}\}.
\end{equation}
To study fully piecewise linear problem (PLP), we also consider the following  linear subproblems
$$C-\min T_ix+b_i\;\;{\rm subject\;to}\;x\in A_i.\leqno{{\rm (LP)}_{i}}$$
Recall that a weak Pareto face (resp. Pareto face) $F$ of linear problem (LP)$_{i}$ is a face of $A_{i}$ such that each point in  $F$ is a weak Pareto solution (resp. Pareto solution) of (LP)$_{i}$.

In the case of finite dimensional spaces, the following well known result on the solution sets for linear vector optimization problems is based on the pioneering work by Arrow et al. \cite{ABB}  (also see \cite[Theorem 3.3]{Luc} and \cite[Theorems 4.1.20 and 4.3.8]{Luc2})

{\bf Theorem ABB.} {\it
Let $X=\mathbb{R}^p$, $Y=\mathbb{R}^q$, $C=\mathbb{R}^q_+$, $f(x)=T(x)+b$ and $\varphi_k(x)=\langle x_k^*,x\rangle+r_k$ for some $T\in\mathcal{L}(X,Y)$, $x_k^*\in X^*=\mathcal{L}(X,\mathbb{R})$ and $(b,r_k)\in Y\times\mathbb{R}$ ($k\in\overline{1m}$). Then the Pareto solution set and  weak Pareto solution set of (PLP) are the union of finitely many faces of $A$, where $A$ is as in (\ref{Fe})}.

In the case when the objective $f$ is further piecewise linear, several authors studied the structure of the Pareto solution set and weak Pareto soluiton set and proved that if the objective $f$ is restricted in $\mathcal{PL}_1(X,Y)$ and each $\varphi_k$ is  linear then the weak Pareto solution set of the corresponding piecewise linear  problem (PLP) is the union of finitely many polyhedra, while its Pareto solution set is the union of generalized polyhedra (cf. \cite{ZY, YY, Ze, FMY} and the references therein).

For piecewise linear functions  $f$ and  $\varphi_j$ appearing in fully piecewise linear vector optimization problem (PLP),  based on Proposition \ref{pro2.6} and its proof (an elementary method), we can select   $P_i\in\mathcal{P}(X)$, $T_i\in\mathcal{L}(X,Y)$,  $u_{ik}^*,x_{ij}^*\in X^*$, $b_i\in Y$ and $t_{ik},c_{ij}\in\mathbb{R}$  ($i=1,\cdots,m$, $k=1,\cdots,q_i$ and $j=1,\cdots,l$)  such that
\begin{equation}\label{Poi}
P_i=\{x\in X:\;\langle u_{ik}^*,x\rangle\leq t_{ik}\;\;\forall k\in\overline{1q_i}\},
\end{equation}
\begin{equation}\label{wp1'}
X=\bigcup\limits_{i=1}^m P_i,\;{\rm int}(P_i)\not=\emptyset,\;P_i\cap{\rm int}(P_{i'})=\emptyset\quad\forall i,i'\in\overline{1m}\;{\rm with}\;i\not=i'
\end{equation}
and
\begin{equation}\label{wp2}
f|_{P_i}=T_i|_{P_i}+b_i\;\;{\rm and}\;\;\varphi_j|_{P_i}=x_{ij}^*|_{P_i}-c_{ij}\quad\forall (i,j)\in\overline{1m}\times\overline{1l}.
\end{equation}
In order to obtain exact formulas for optimal value sets and solution sets of  (PLP), we  structure the following  procedures: \\
{\bf Step 1} (Decomposing the space $X$):  Let
\begin{equation}\label{X1}
X_1:=
\bigcap\limits_{i=1}^{m}\bigcap\limits_{(j,k)\in\overline{1l}\times\overline{1q_i}}\mathcal{N}(x_{ij}^*)\cap\mathcal{N}(u^*_{ik}),
\end{equation}
where $\mathcal{N}(x_{ij}^*):=\{x\in X:\langle x_{ij}^*,x\rangle=0\}$ is the null space of $x_{ij}^*$;
namely, $X_1$ is  the solution space of the following system of linear equations
$$\langle u_{ik}^*,x\rangle=\langle x_{ij}^*,x\rangle=0,\;i=1,\cdots,m,\;j=1,\cdots,l,\;k=1,\cdots,q_i.$$
Take a maximal linearly independent subset $\{e_1^*,\cdots,e^*_\nu\}$  of the finite set $\{u^*_{ik},x_{ij}^*:\;i\in\overline{1m},\;j\in\overline{1l},\;k\in\overline{1q_i}\}$. Then,  for each $\iota\in\overline{1\nu}$, the following system of linear equations
$$\langle e_\iota^*,x\rangle=1\;\;\;{\rm and}\;\;\; \langle e^*_{\iota'},x\rangle=0\;\;\forall\iota'\in\overline{1\nu}\setminus\{\iota\}$$
is solvable; take a solution $h_\iota$ of this system of linear equations.
Let
\begin{equation}\label{X2}
X_2:={\rm span}\{h_1,\cdots,h_\nu\}=\left\{\sum\limits_{\iota=1}^\nu t_\iota h_\iota:\;t_1,\cdots,t_\nu\in\mathbb{R}\right\}
\end{equation}
(in particular, $X_2={\rm span}\{e_1^*,\cdots,e_\nu^*\}$ when $X$ is a Hilbert space).
Then
\begin{equation}\label{6.4}
X=X_1+X_2\;\;\;{\rm  and}\;\;\; X_1\cap X_2=\{0\}.
\end{equation}
{\bf Step 2} (Constructing finite dimensional subspace $Z$ of $Y$): Thanks to Theorem \ref{thm3.0}  and (\ref{6.4}),
\begin{equation}\label{6.5}
\hat T:=T_1|_{X_1}=T_2|_{X_1}=\cdots=T_m|_{X_1}.
\end{equation}
Let $D$ denote the finite set $\bigcup\limits_{i=1}^m\{T_i(h_1),\cdots,T_i(h_\nu),\,b_i\}$ and take $u_1,\cdots,u_\varsigma$ in $D$ with $\varsigma$ being the maximal integer such that $u_1\in D\setminus \hat T(X_1)$,
$$u_2\in D\setminus (\hat T(X_1)+{\rm span}\{u_1\}),\cdots,u_{\varsigma}\in D\setminus (\hat T(X_1)+{\rm span}\{u_1,\cdots,u_{\varsigma-1}\}),$$
where $X_1$ and $h_1,\cdots,h_\nu$ are as in Step 1.
Let
\begin{equation}\label{Z}
Z:={\rm span}\{u_1,\cdots,u_\varsigma\}.
\end{equation}
Clearly, $Z$ is a subspace of $Y$ such that
\begin{equation}\label{W4.1}
{\rm dim}(Z)=\varsigma\;\;\;{\rm and}\;\;\;\hat T(X_1)\cap Z=\{0\}.
\end{equation}
Let $\Pi_Z$ denote the projection from $\hat T(X_1)\oplus Z$ onto $Z$, that is,
\begin{equation}\label{Pi}
\Pi_Z(y+z):=z\quad\forall (y,z)\in \hat T(X_1)\times Z,
\end{equation}
and let $C_Z$ be a convex cone in the finite dimensional space $Z$ defined by
\begin{equation}\label{CZ0}
C_Z:=\Pi_Z((\hat T(X_1)\oplus Z)\cap C).
\end{equation}
{\bf Step 3} (Exact formulas for weak Pareto optimal value set and weak Pareto  set of (PLP)): For each $i\in\overline{1m}$, let
\begin{equation}\label{hatA}
\hat A_i:=\{x_2\in\hat P_i:\,\langle x_{ij}^*,x_2\rangle\leq c_{ij}\;\;\forall j\in\overline{1l}\},
\end{equation}
where $\hat P_i:=\{x_2\in X_2:\,\langle u_{ik}^*,x_2\rangle\leq t_{ik}\;\;\forall k\in\overline{1q_i}\}$.
The weak Pareto optimal value set ${\rm WE}(f(A),C)$ and weak Pareto solution set $S^w$ of (PLP) can be formulized as follows:\\
(i) If $(\hat T(X_1)\oplus Z)\cap{\rm int}(C)=\emptyset$ then ${\rm WE}(f(A),C)=f(A)$ and $S^w=A$.\\
(ii) If $(\hat T(X_1)\oplus Z)\cap{\rm int}(C)\not=\emptyset$ then
$${\rm WE}(f(A),C)=\hat T(X_1)+\bigcup\limits_{i=1}^m\hat V_i^w\;\;{\rm and}\;\; S^w=X_1+\bigcup\limits_{i=1}^m\hat A_i\cap (\Pi_Z\circ T_i)^{-1}(\hat V_i^w-\Pi_Z(b_i)),$$
where $\hat V_{i}^w:=\Pi_Z\big(T_i(\hat A_{i})+b_i\big)\setminus\left(\bigcup\limits_{i'\in\overline{1m}}\Pi_Z\big(T_{i'}(\hat A_{i'})+b_{i'}\big)+(\hat T(X_1)\oplus Z)\cap{\rm int}(C)\right)$ (thanks to Theorems \ref{thm5.1} and \ref{pro5.2} and Lemma \ref{lem5.1}).

Similarly,  with Corollary \ref{coro5.1} and Proposition \ref{pro5.3} replacing Theorems \ref{thm5.1} and \ref{pro5.2},  we can also obtain the formulas for the Pareto optimal value set  and Pareto solution set of (PLP). Based on the above procedures, we establish the structure theorems for  Pareto solution sets and optimal value sets of (PLP).

\section{Polyhedra in normed  spaces} Let $Z$ be a normed space with the dual space $Z^*$. Recall (cf.\cite{A,Ro}) that a subset $P$ of $Z$ is a (convex)  polyhedron  if there exist
$u_1^*,\cdots,u_m^*\in Z^*$ and $s_1,\cdots,s_m\in \mathbb{R}$ such that
\begin{equation}\label{Po2.1}
P=\{x\in Z:\;\langle u_i^*,x\rangle\leq s_i,\;i=1,\cdots,m\}.
\end{equation}
An exposed face of $P$ is a set $F$  such that
$$F=\{u\in P:\;\langle x^*,u\rangle=\sup\limits_{x\in P}\langle x^*,x\rangle\}$$
for some $x^*\in Z^*$ (cf. \cite[P.162]{Ro}). It is known that each polyhedron has finitely many exposed faces.
We say that a subset $\tilde{P}$ of $Z$ is a  generalized polyhedron if there exist a polyhedron $P$ in $Z$, $v_1^*,\cdots,v_k^*\in Z^*$ and $t_1,\cdots,t_k\in\mathbb{R}$ such that
$$\tilde{P}=P\cap\{z\in Z:\;\langle v_i^*,z\rangle<t_i,\;1\leq i\leq k\}.$$
Given $z^*\in Z^*\setminus\{0\}$, let $\mathcal{N}(z^*)$ denote the null space of $z^*$, that is,
$$\mathcal{N}(z^*):=\{z\in Z:\langle z^*,z\rangle=0\}.$$
Then $\mathcal{N}(z^*)$ is a closed subspace of $Z$ with codimension ${\rm codim}(\mathcal{N}(z^*))=1$.

For a convex set $K$ in $Z$, let ${\rm rec}(K)$ denote the recession cone of $K$, that is, ${\rm rec}(K):=\{h\in Z:\;K+\mathbb{R}_+h\subset K\}$, and recall that linearity space ${\rm lin}(K)$ of $K$ is defined as $(-{\rm rec}(K))\cap{\rm rec}(K)$. Then ${\rm lin}(K)$ is the large subspace contained in ${\rm rec}(K)$.  It is clear that if $P$ is a polyhedron in $Z$ as in (\ref{Po2.1}) then
\begin{equation}\label{L2.2}
{\rm rec}(P)=\{x\in Z:\;\langle u_i^*,x\rangle\leq0,\;i=1,\cdots,m\}\;\;{\rm and}\;\;{\rm lin}(P)=\bigcap\limits_{i=1}^m\mathcal{N}(u_i^*).
\end{equation}
Recall that a normed space $Z$ is a direct sum of its two closed subspaces $Z_1$ and $Z_2$, denoted by $Z=Z_1\oplus Z_2$, if $Z_1\cap Z_2=\{0\}$ and $Z=Z_1+Z_2$.
It is easy to verify that if $Z=Z_1\oplus Z_2$ then for each $z\in Z$ there exists a unique $(z_1,z_2)\in Z_1\times Z_2$ such that $z=z_1+z_2$ and the projection mapping $\Pi_{Z_2}:\,Z=Z_1\oplus Z_2\rightarrow Z_2$ is linear, where
\begin{equation}\label{P2.1}
\Pi_{Z_2}(z_1+z_2):=z_2\quad\forall (z_1,z_2)\in Z_1\times Z_2.
\end{equation}
It is known that if $Q$ is a polyhedron in  $Z_1\oplus Z_2$ then $\Pi_{Z_2}(Q)$ is a polyhedron in $Z_2$ (cf. \cite[Theorem 19.3]{Ro} and the following Proposition 2.1).

For a convex  set $C$ in $Z$, let ${\rm int}(C)$ (resp. ${\rm rint}(C)$) denote the interior (relative interior) of $C$. It is known that if ${\rm dim}(Z)<\infty$ and $C\not=\emptyset$ then ${\rm rint}(C)\not=\emptyset$. Throughout, let $\mathbb{N}$ denote the set of all natural numbers and
$$\overline{1m}:=\{1,\cdots,m\}\quad\forall m\in\mathbb{N}.$$
Now we provide some results on polyhedra which are useful for our analysis later.
\begin{pro}\label{pro2.1}
Let $(z_1^*,s_1),\cdots,(z_m^*,s_m)\in Z^*\times\mathbb{R}$ and $P:=\{z\in Z:\;\langle z_i^*,z\rangle\leq s_i\;\;\forall i\in\overline{1m}\}$.
Let $Z_1$ and $Z_2$ be two closed subspaces of $Z$  such that
\begin{equation}\label{1}
Z_1\subset \bigcap\limits_{i=1}^m\mathcal{N}(z_i^*),\;{\rm dim}(Z_2)={\rm codim}(Z_1)<\infty\;\;{\rm and}\;\;Z=Z_1\oplus Z_2.
\end{equation}
Then
\begin{equation}\label{2}
P=Z_1+\hat P\;\;{\rm and}\;\;{\rm rint}(P)=Z_1+{\rm rint}(\hat P),
\end{equation}
where $\hat P:=\{z\in Z_2:\;\langle z_i^*,z\rangle\leq s_i,\;i=1,\cdots,m\}$.
\end{pro}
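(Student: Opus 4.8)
\textit{Proof proposal.} The plan is to establish the set identity $P=Z_1+\hat P$ first, recast it as $P=\Pi_{Z_2}^{-1}(\hat P)$, and then bootstrap it to the relative‑interior statement. For $Z_1+\hat P\subset P$: if $z_1\in Z_1$ and $z_2\in\hat P$, then $Z_1\subset\bigcap_{i=1}^m\mathcal N(z_i^*)$ gives $\langle z_i^*,z_1+z_2\rangle=\langle z_i^*,z_2\rangle\le s_i$ for all $i$, so $z_1+z_2\in P$. Conversely, for $z\in P$ write $z=\Pi_{Z_1}(z)+\Pi_{Z_2}(z)$ according to $Z=Z_1\oplus Z_2$; then $\langle z_i^*,\Pi_{Z_2}(z)\rangle=\langle z_i^*,z\rangle-\langle z_i^*,\Pi_{Z_1}(z)\rangle=\langle z_i^*,z\rangle\le s_i$, so $\Pi_{Z_2}(z)\in\hat P$ and $z\in Z_1+\hat P$. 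Equivalently $P=\Pi_{Z_2}^{-1}(\hat P)$. If $\hat P=\emptyset$ then $P=\emptyset$ and the second identity is trivial, so from now on assume $\hat P\ne\emptyset$.

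Before the relative‑interior identity I would record two facts. First, $\Pi_{Z_2}$ (hence $\Pi_{Z_1}={\rm id}-\Pi_{Z_2}$) is continuous: if $u_n\to 0$ with $\|\Pi_{Z_2}(u_n)\|=1$, compactness of the unit sphere of the finite‑dimensional space $Z_2$ gives a subsequence with $\Pi_{Z_2}(u_n)\to z_2$, $\|z_2\|=1$, whence $\Pi_{Z_1}(u_n)=u_n-\Pi_{Z_2}(u_n)\to -z_2$, and closedness of $Z_1$ forces $z_2\in Z_1\cap Z_2=\{0\}$, a contradiction. Second, $\mathrm{aff}(P)=Z_1+\mathrm{aff}(\hat P)$: this follows from $P=Z_1+\hat P$ and the fact that $Z_1$ is a subspace, via a one‑line affine‑combination computation; in particular every $u\in\mathrm{aff}(P)$ decomposes as $u=\Pi_{Z_1}(u)+\Pi_{Z_2}(u)$ with $\Pi_{Z_1}(u)\in Z_1$ and $\Pi_{Z_2}(u)\in\mathrm{aff}(\hat P)$. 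Finally, $\hat P$ is a nonempty polyhedron in the finite‑dimensional space $Z_2$, so $\mathrm{rint}(\hat P)\ne\emptyset$ and the asserted formula is substantive.

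For $Z_1+\mathrm{rint}(\hat P)\subset\mathrm{rint}(P)$: given $z_1\in Z_1$ and $z_2\in\mathrm{rint}(\hat P)$, pick $\delta>0$ with $\{v\in\mathrm{aff}(\hat P):\|v-z_2\|<\delta\}\subset\hat P$ and set $r:=\delta/(1+\|\Pi_{Z_2}\|)$. If $u\in\mathrm{aff}(P)$ with $\|u-(z_1+z_2)\|<r$, then $\Pi_{Z_2}(u)\in\mathrm{aff}(\hat P)$ and $\|\Pi_{Z_2}(u)-z_2\|=\|\Pi_{Z_2}(u-(z_1+z_2))\|\le\|\Pi_{Z_2}\|\,r<\delta$, so $\Pi_{Z_2}(u)\in\hat P$ and $u=\Pi_{Z_1}(u)+\Pi_{Z_2}(u)\in Z_1+\hat P=P$; hence $z_1+z_2\in\mathrm{rint}(P)$. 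For the reverse inclusion $\mathrm{rint}(P)\subset Z_1+\mathrm{rint}(\hat P)$: take $w\in\mathrm{rint}(P)$, write $w=z_1+z_2$ with $z_1\in Z_1$ and (since $w\in P$) $z_2\in\hat P$, and choose $r>0$ with $\{u\in\mathrm{aff}(P):\|u-w\|<r\}\subset P$. For any $v\in\mathrm{aff}(\hat P)$ with $\|v-z_2\|<r$ we have $z_1+v\in Z_1+\mathrm{aff}(\hat P)=\mathrm{aff}(P)$ and $\|z_1+v-w\|=\|v-z_2\|<r$, so $z_1+v\in P=Z_1+\hat P$; writing $z_1+v=z_1'+z_2'$ with $z_1'\in Z_1$, $z_2'\in\hat P$ forces $z_1-z_1'=z_2'-v\in Z_1\cap Z_2=\{0\}$, hence $v=z_2'\in\hat P$. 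Thus $z_2\in\mathrm{rint}(\hat P)$ and $w\in Z_1+\mathrm{rint}(\hat P)$.

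The only genuinely delicate point is that the relative interior is taken in the possibly infinite‑dimensional space $Z$, where $\mathrm{aff}(P)$ may itself be infinite‑dimensional. The representation $P=\Pi_{Z_2}^{-1}(\hat P)$ together with the continuity of the finite‑rank projection $\Pi_{Z_2}$ localises the whole question inside the finite‑dimensional polyhedron $\hat P\subset Z_2$, where relative interior is unproblematic; once this reduction and the affine‑hull identity are in hand, both inclusions reduce to the elementary $\varepsilon$--$\delta$ estimates above.
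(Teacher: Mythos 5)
Your proof is correct and follows essentially the same route the paper sketches: the paper's two stated ingredients — the existence of $L>0$ with $L(\|z_1\|+\|z_2\|)\leq\|z_1+z_2\|$ (equivalently, continuity of the projections onto $Z_1$ and $Z_2$, which you prove via compactness of the unit sphere of the finite-dimensional $Z_2$ and closedness of $Z_1$) and the identity ${\rm aff}(Z_1+\hat P)=Z_1+{\rm aff}(\hat P)$ — are exactly what your $\varepsilon$--$\delta$ arguments flesh out. No gaps; your write-up simply supplies the details the paper leaves to the reader.
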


The first equality in (\ref{2}) is a slight variant of \cite[Lemma 2.1]{Zh} and can be proved similar to the proof of \cite[Lemma 2.1]{Zh}, while the second equality in (\ref{2}) is immediate from the following observation: there exists $L\in(0,\;+\infty)$ such that  $L(\|z_1\|+\|z_2\|)\leq \|z_1+z_2\|$ for all $(z_1,z_2)\in Z_1\times Z_2$ and
the affine subspace ${\rm aff}(Z_1+\hat P)$ is equal to $Z_1+{\rm aff}(\hat P)$ (thanks to  (\ref{1}) and the definition of $\hat P$).

From Proposition 2.1, one can see that many properties on polyhedra established in the finite dimension case also hold in the infinite dimension one. In particular,  the following corollaries are  consequences of Proposition \ref{pro2.1} and  \cite[Corollary 6.5.1]{Ro}.

\begin{coro}\label{coro2.1}
Let  $\{(u_1^*,s_1),\cdots,(u_n^*,s_n)\}$ and $P$ be as in Proposition \ref{pro2.1}. Then
\begin{equation}\label{ri2.18}
{\rm rint}(P)=\{z\in Z:\;\langle u^*_i,z\rangle<s_i,\;i\in\overline{1n}\setminus \bar I_P\}\cap\bigcap\limits_{i\in \bar I_P}F_i,
\end{equation}
where $\bar I_P:=\{i\in\overline{1n}:\;\langle u_i^*,z\rangle=s_i\;{\rm for\;all}\;z\in P\}$ and $F_i:=\{z\in Z:\;\langle u_i^*,z\rangle=s_i\}$.
\end{coro}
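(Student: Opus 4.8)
The plan is to reduce to the finite-dimensional case by Proposition \ref{pro2.1} and then invoke \cite[Corollary 6.5.1]{Ro}. First I would take $Z_1:=\bigcap_{i=1}^n\mathcal{N}(u_i^*)$; being a finite intersection of closed hyperplanes, $Z_1$ is a closed subspace of $Z$ with ${\rm codim}(Z_1)\leq n<\infty$, so it admits a finite-dimensional (hence closed) algebraic complement $Z_2$ with $Z=Z_1\oplus Z_2$ and ${\rm dim}(Z_2)={\rm codim}(Z_1)$. Thus $(Z_1,Z_2)$ satisfies (\ref{1}) (with $m$ replaced by $n$), and Proposition \ref{pro2.1} yields
$$P=Z_1+\hat P\quad{\rm and}\quad {\rm rint}(P)=Z_1+{\rm rint}(\hat P),$$
where $\hat P:=\{z\in Z_2:\;\langle u_i^*,z\rangle\leq s_i,\;i\in\overline{1n}\}$ is a polyhedron in the finite-dimensional space $Z_2$. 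If $P=\emptyset$ the conclusion is immediate, since then ${\rm rint}(P)=\emptyset$ while $\bigcap_{i\in\overline{1n}}F_i\subseteq P=\emptyset$ and $\bar I_P=\overline{1n}$; so from now on assume $P\neq\emptyset$, equivalently $\hat P\neq\emptyset$.

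Next I would apply \cite[Corollary 6.5.1]{Ro} to $\hat P$ inside $Z_2$: writing $\hat I:=\{i\in\overline{1n}:\langle u_i^*,z\rangle=s_i\ \,\forall z\in\hat P\}$ for the set of implicit equalities of the system defining $\hat P$, one obtains
$${\rm rint}(\hat P)=\{z\in Z_2:\;\langle u_i^*,z\rangle<s_i,\;i\in\overline{1n}\setminus\hat I\}\cap\bigcap\limits_{i\in\hat I}(F_i\cap Z_2).$$
The key bookkeeping step is the identity $\hat I=\bar I_P$. Since $0\in Z_1$ we have $\hat P\subseteq P$, so $i\in\bar I_P$ forces $\langle u_i^*,z\rangle=s_i$ for every $z\in\hat P$, i.e. $i\in\hat I$. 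Conversely, for $i\in\hat I$ and any $z\in P$, decompose $z=z_1+z_2$ with $z_1\in Z_1\subseteq\mathcal{N}(u_i^*)$ and $z_2\in\hat P$; then $\langle u_i^*,z\rangle=\langle u_i^*,z_2\rangle=s_i$, so $i\in\bar I_P$. Hence $\hat I=\bar I_P$.

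Finally I would translate the description of ${\rm rint}(\hat P)$ back to $Z$. For any $z\in Z$ with its unique decomposition $z=z_1+z_2$ ($z_1\in Z_1,\ z_2\in Z_2$) one has $\langle u_i^*,z\rangle=\langle u_i^*,z_2\rangle$ for every $i$, because $Z_1\subseteq\mathcal{N}(u_i^*)$; consequently $z_2\in{\rm rint}(\hat P)$ if and only if $\langle u_i^*,z\rangle<s_i$ for $i\in\overline{1n}\setminus\bar I_P$ and $\langle u_i^*,z\rangle=s_i$ (that is, $z\in F_i$) for $i\in\bar I_P$. Therefore
$${\rm rint}(P)=Z_1+{\rm rint}(\hat P)=\{z\in Z:\;\langle u_i^*,z\rangle<s_i,\;i\in\overline{1n}\setminus\bar I_P\}\cap\bigcap\limits_{i\in\bar I_P}F_i,$$
which is (\ref{ri2.18}). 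I do not expect a genuine obstacle: the only points needing care are the existence of a decomposition $Z=Z_1\oplus Z_2$ meeting the hypotheses of Proposition \ref{pro2.1}, the identity $\hat I=\bar I_P$, and the invariance of the functionals $z\mapsto\langle u_i^*,z\rangle$ under the projection onto $Z_2$ — all of which follow directly from $Z_1\subseteq\bigcap_{i=1}^n\mathcal{N}(u_i^*)$.
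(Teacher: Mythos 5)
Your proof is correct and follows exactly the route the paper intends: it states Corollary \ref{coro2.1} as a consequence of Proposition \ref{pro2.1} and \cite[Corollary 6.5.1]{Ro} without further detail, and your argument supplies precisely those details (the decomposition $Z=Z_1\oplus Z_2$ with $Z_1=\bigcap_{i=1}^n\mathcal{N}(u_i^*)$, the identification $\hat I=\bar I_P$, and the lifting of ${\rm rint}(\hat P)$ back to $Z$ via ${\rm rint}(P)=Z_1+{\rm rint}(\hat P)$). No gaps.
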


\begin{coro}\label{pro2.4}
Let $Z_1$ and  $Z_2$ be two closed subspaces of $Z$ such that
\begin{equation}\label{2.20'}
Z=Z_1\oplus Z_2\;\;\;{\rm and}\;\;\;{\rm dim}(Z_2)<\infty.
\end{equation}
Let $\hat P$ be a polyhedron in $Z_2$ and $\hat F$ be a subset of $\hat P$. Then $\hat F$ is an exposed face of $\hat P$ if and only if $Z_1+\hat F$ is an exposed face of the polyhedron $Z_1+\hat P$ in $Z$.
\end{coro}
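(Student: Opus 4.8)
The plan is to argue straight from the functional definition of an exposed face, transporting exposing functionals between $Z$ and $Z_2$ through the projection $\Pi_{Z_2}:Z=Z_1\oplus Z_2\to Z_2$. First I would dispose of the trivial case $\hat P=\emptyset$ and, since only nonempty faces are needed in the sequel, work with $\hat F\neq\emptyset$. Two preparatory remarks are required. Since $Z_1$ is closed, $\dim(Z_2)<\infty$ and $Z=Z_1\oplus Z_2$, the estimate $L(\|z_1\|+\|z_2\|)\le\|z_1+z_2\|$ recorded after Proposition \ref{pro2.1} makes $\Pi_{Z_2}$ bounded; hence for each $z_2^*\in Z_2^*$ the functional $\overline{z_2^*}:=z_2^*\circ\Pi_{Z_2}$ lies in $Z^*$, it is the unique element of $Z^*$ vanishing on $Z_1$ and restricting to $z_2^*$ on $Z_2$, and conversely every $x^*\in Z^*$ with $x^*|_{Z_1}=0$ is of this form with $z_2^*=x^*|_{Z_2}$. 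Moreover, writing $\hat P=\{z\in Z_2:\langle v_i^*,z\rangle\le t_i\}$ and applying Proposition \ref{pro2.1} to the polyhedron $\{z\in Z:\langle\overline{v_i^*},z\rangle\le t_i\}$ of $Z$ shows $Z_1+\hat P=\{z\in Z:\langle\overline{v_i^*},z\rangle\le t_i\}$, which reconfirms that $Z_1+\hat P$ is a polyhedron.

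The crux is a transfer identity for the supremum functional. For $x^*\in Z^*$ I would show that $x^*|_{Z_1}\neq0$ implies $\sup_{z\in Z_1+\hat P}\langle x^*,z\rangle=+\infty$ (because $Z_1$ is a subspace and $\hat P\neq\emptyset$), whereas if $x^*|_{Z_1}=0$ then, writing $z_2^*:=x^*|_{Z_2}$, $\alpha:=\sup_{z\in Z_1+\hat P}\langle x^*,z\rangle$ and $\beta:=\sup_{z\in\hat P}\langle z_2^*,z\rangle$, the decomposition $z=z_1+z_2$ with $z_1\in Z_1$, $z_2\in\hat P$ together with $\langle x^*,z\rangle=\langle z_2^*,z_2\rangle$ gives $\alpha=\beta$ and
\[
\{z\in Z_1+\hat P:\ \langle x^*,z\rangle=\alpha\}=Z_1+\{z\in\hat P:\ \langle z_2^*,z\rangle=\beta\}.
\]

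The two implications then follow immediately. For $(\Rightarrow)$, if $\hat F$ is the exposed face of $\hat P$ determined by some $z_2^*\in Z_2^*$, then $\hat F\neq\emptyset$ forces $\beta$ to be finite and attained, so by the displayed identity $\overline{z_2^*}$ exposes precisely $Z_1+\hat F$ in $Z_1+\hat P$. For $(\Leftarrow)$, if $Z_1+\hat F$ is the exposed face of $Z_1+\hat P$ determined by some $x^*\in Z^*$, then $Z_1+\hat F\neq\emptyset$ makes the corresponding supremum attained and hence finite, which by the first part of the key step forces $x^*|_{Z_1}=0$; setting $z_2^*:=x^*|_{Z_2}$, the displayed identity then gives $Z_1+\hat F=Z_1+\hat F'$, where $\hat F'$ is the exposed face of $\hat P$ determined by $z_2^*$. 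Since $\hat F,\hat F'\subset Z_2$ and $Z_1\cap Z_2=\{0\}$, uniqueness of the direct-sum decomposition forces $\hat F=\hat F'$, which finishes the proof.

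There is no serious obstacle here — the argument is essentially bookkeeping with the splitting $Z=Z_1\oplus Z_2$. The two points deserving care are the boundedness of $\Pi_{Z_2}$, which is exactly where $\dim(Z_2)<\infty$ and the closedness of $Z_1$ enter and which guarantees that the extended functionals remain in $Z^*$, and, in the $(\Leftarrow)$ direction, the remark that a nonempty exposed face must be cut out by a functional annihilating $Z_1$.
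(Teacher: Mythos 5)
Your argument is correct and is essentially the intended one: the paper states this corollary without proof as a consequence of Proposition \ref{pro2.1}, and your transport of exposing functionals through the continuous projection $\Pi_{Z_2}$ (using that a functional not annihilating $Z_1$ has supremum $+\infty$ over $Z_1+\hat P$, while one annihilating $Z_1$ has the same supremum and the level set $Z_1+\{z\in\hat P:\langle z_2^*,z\rangle=\beta\}$) is exactly the bookkeeping that citation leaves to the reader. Your restriction to $\hat F\neq\emptyset$ is in fact necessary rather than merely convenient: when $Z_1\neq\{0\}$ and $\hat P$ is compact, the empty set is an exposed face of $Z_1+\hat P$ (via any $x^*$ with $x^*|_{Z_1}\neq0$) but not of $\hat P$, so the equivalence as literally stated only holds for nonempty faces, which are the only ones used later in the paper.
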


The following proposition is known   and useful for us (cf. \cite[Lemma 2.2]{Zh}).

\begin{pro}\label{pro2.2}
Let $P_1$ and $P_2$ be  two polyhedra (resp. generalized polyhedra) in $Z$. Then $P_1+P_2$ and $P_1\cap P_2$ are
polyhedra (resp. generalized polyhedra).
\end{pro}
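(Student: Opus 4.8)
The plan is to treat the two assertions by quite different routes: the statement about $P_1\cap P_2$ is immediate from the definitions, whereas the statement about $P_1+P_2$ will be reduced to the finite‑dimensional case by the same space‑splitting device that underlies Proposition \ref{pro2.1}. For the intersection, essentially nothing is needed: writing $P_1=\{z:\langle u_i^*,z\rangle\le s_i,\ i\in\overline{1m}\}$ and $P_2=\{z:\langle v_j^*,z\rangle\le t_j,\ j\in\overline{1n}\}$, the set $P_1\cap P_2$ is cut out by the concatenated finite system of inequalities and is thus a polyhedron; in the generalized case one merely merges the two families of non‑strict inequalities and the two families of strict ones, and $P_1\cap P_2$ is again a generalized polyhedron.

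For the sum, I would first collect all the (finitely many) functionals $a_1^*,\dots,a_r^*\in Z^*$ that occur in fixed (generalized‑)polyhedral descriptions of $P_1$ and $P_2$, then set $Z_1:=\bigcap_{s=1}^r\mathcal{N}(a_s^*)$, take a maximal linearly independent subset $\{e_1^*,\dots,e_\nu^*\}$ of $\{a_1^*,\dots,a_r^*\}$, choose $h_\iota\in Z$ with $\langle e_\iota^*,h_{\iota'}\rangle=\delta_{\iota\iota'}$, and put $Z_2:={\rm span}\{h_1,\dots,h_\nu\}$ (this is exactly the construction of $X_1$ and $X_2$ performed in Section~1). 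Then $Z=Z_1\oplus Z_2$ with both summands closed, $\dim(Z_2)=\nu<\infty$, and $Z_1\subset\bigcap_s\mathcal{N}(a_s^*)$; since every $a_s^*$ annihilates $Z_1$, each open or closed half‑space occurring in the descriptions of $P_1,P_2$ factors through the projection onto $Z_2$, so Proposition \ref{pro2.1} (and its evident analogue for strict inequalities) yields $P_k=Z_1+\hat P_k$ for $k=1,2$, with $\hat P_k\subset Z_2$ a (generalized) polyhedron. Because $Z_1$ is a subspace, $P_1+P_2=Z_1+(\hat P_1+\hat P_2)$, so it remains to know: (i) that the sum of two polyhedra (resp. generalized polyhedra) in the finite‑dimensional space $Z_2$ is again one; and (ii) that $Z_1+\hat Q$ is a polyhedron (resp. generalized polyhedron) in $Z$ whenever $\hat Q\subset Z_2$ is. For (ii) I would write $\hat Q=\{z_2\in Z_2:\langle w_k^*,z_2\rangle\le r_k,\ k\in K;\ \langle p_l^*,z_2\rangle<q_l,\ l\in L\}$ and lift the functionals by $\tilde w_k^*:=\sum_{\iota=1}^\nu\langle w_k^*,h_\iota\rangle e_\iota^*$ and $\tilde p_l^*:=\sum_{\iota=1}^\nu\langle p_l^*,h_\iota\rangle e_\iota^*$; these are continuous (finite combinations of the $e_\iota^*$), vanish on $Z_1$, and restrict to $w_k^*,p_l^*$ on $Z_2$ (because $\langle e_\iota^*,h_{\iota'}\rangle=\delta_{\iota\iota'}$), whence $Z_1+\hat Q=\{z\in Z:\langle\tilde w_k^*,z\rangle\le r_k,\ \langle\tilde p_l^*,z\rangle<q_l\}$, which is of the required form.

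The routine parts are the intersection and step (ii); the real content is (i). For ordinary polyhedra it is classical, being immediate from the Minkowski--Weyl representation $\hat P_k={\rm conv}(V_k)+{\rm cone}(R_k)$, which gives $\hat P_1+\hat P_2={\rm conv}(V_1+V_2)+{\rm cone}(R_1\cup R_2)$. For generalized polyhedra the approach I would take is to decompose each $\hat P_k$ into the finitely many disjoint relative interiors of those faces of the polyhedron $\overline{\hat P_k}$ that it contains, to use ${\rm rint}(F_1)+{\rm rint}(F_2)={\rm rint}(F_1+F_2)$ together with the polyhedrality of $F_1+F_2$, and then to check that the resulting finite union of relative interiors of polyhedra --- which is automatically convex, being a sum of convex sets --- is again a generalized polyhedron. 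This last verification, i.e. the finite‑dimensional statement, is the delicate point; it is precisely \cite[Lemma~2.2]{Zh}, which I would invoke rather than reprove, and combining it with (ii) completes the argument.
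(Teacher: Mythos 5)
Your proposal is essentially sound, and it is worth noting that the paper itself offers no proof of this proposition at all: it simply records the statement as known and cites \cite[Lemma 2.2]{Zh}. Your treatment of the intersection and of the lifting step (ii) is correct and routine, and your reduction of the Minkowski sum to the finite-dimensional case via the splitting $Z=Z_1\oplus Z_2$ with $Z_1=\bigcap_s\mathcal{N}(a_s^*)$ is exactly in the spirit of Proposition \ref{pro2.1} and of the construction of $X_1,X_2$ in Section 1; the identity $P_1+P_2=Z_1+(\hat P_1+\hat P_2)$ and the lifting $\tilde w_k^*=\sum_\iota\langle w_k^*,h_\iota\rangle e_\iota^*$ both check out. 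Your face-decomposition argument for the generalized case is also on the right track: one does have that for each face $F$ of ${\rm cl}(\hat P_k)$ the set ${\rm rint}(F)$ is either contained in $\hat P_k$ or disjoint from it, so $\hat P_k$ is a finite disjoint union of such relative interiors, and ${\rm rint}(F_1)+{\rm rint}(F_2)={\rm rint}(F_1+F_2)$ reduces everything to the assertion that a convex finite union of relative interiors of polyhedra in a finite-dimensional space is a generalized polyhedron. The one caveat is that this residual assertion, which you defer to \cite[Lemma 2.2]{Zh}, is in substance the very lemma the paper cites for the whole proposition, so your argument is not self-contained on the genuinely delicate point --- and it is genuinely delicate: a convex set squeezed between ${\rm rint}(Q)$ and $Q$ for a polyhedron $Q$ need not be a generalized polyhedron (e.g. $(0,1)^2\cup([0,\frac12]\times\{0\})$ in $\mathbb{R}^2$), so the face structure of the union really must be exploited there. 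If you want a complete proof you still owe that finite-dimensional verification; otherwise your write-up is a correct and more informative elaboration of what the paper leaves to the citation.
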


Note that a closed subspace of $Z$ is not necessarily a polyhedron in $Z$. In fact, it is easy to verify that   a closed subspace $E$ of $Z$ is a polyhedron in $Z$ if and only if its codimension ${\rm codim}(E)$ is finite. Note that if $E$ is a closed subspace of $Z$ with ${\rm codim}(E)<+\infty$ and if $H$ is a subspace of $E$ then $E+H$ is a closed subspace of $Z$ with ${\rm codim}(E+H)<+\infty$. The following proposition can be easily proved.

\begin{pro}\label{pro2.3}
Let $Z$ be a normed space, $E$ be a closed subspace of $Z$ with  ${\rm codim}(E)<+\infty$,  and let $H$ be a subspace of $Z$. Then the following statements hold:\\
(i) $E+H+\hat P$ is a  polyhedron  in  $Z$  for each polyhedron $\hat P$  in some finite dimensional subspace of $Z$.\\
(ii) $H+P$ is a polyhedron for each polyhedron $P$ in $Z$.
\end{pro}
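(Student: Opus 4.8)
The plan is to prove (i) first, where all the substance lies, and then obtain (ii) from (i) in a few lines. The key tool is a converse of Proposition~\ref{pro2.1}: \emph{if $Z_1$ is a closed subspace of $Z$ with $\mathrm{codim}(Z_1)<\infty$, if $Z=Z_1\oplus Z_2$, and if $\hat Q$ is a polyhedron contained in $Z_2$, then $Z_1+\hat Q$ is a polyhedron in $Z$.} To see this, write $\hat Q=\{z\in Z_2:\langle\hat z_i^*,z\rangle\le t_i,\ i\in\overline{1r}\}$ with $\hat z_i^*\in Z_2^*$, and lift each $\hat z_i^*$ to $z_i^*:=\hat z_i^*\circ\Pi_{Z_2}\in Z^*$; here $\Pi_{Z_2}$ is continuous because $Z_2$ is finite dimensional and $Z_1$ is closed, so $z_i^*$ is indeed a continuous linear functional, and $Z_1=\ker\Pi_{Z_2}\subset\bigcap_{i=1}^r\mathcal{N}(z_i^*)$. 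Applying Proposition~\ref{pro2.1} to $P^\circ:=\{z\in Z:\langle z_i^*,z\rangle\le t_i,\ i\in\overline{1r}\}$ (with the splitting $Z=Z_1\oplus Z_2$) gives $P^\circ=Z_1+\{z\in Z_2:\langle\hat z_i^*,z\rangle\le t_i,\ i\in\overline{1r}\}=Z_1+\hat Q$, so $Z_1+\hat Q$ is a polyhedron.

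For (i), set $E':=E+H$. Since the image of $E'$ in the finite dimensional quotient $Z/E$ is a subspace, hence closed, $E'$ is the preimage of a closed set under the (continuous) quotient map and so is a closed subspace of $Z$; moreover $\mathrm{codim}(E')\le\mathrm{codim}(E)<\infty$. Fix a finite dimensional complement $Z_2$ with $Z=E'\oplus Z_2$, let $\Pi_{Z_2}$ be the associated continuous projection, let $W$ be the finite dimensional subspace with $\hat P\subset W$, and put $\hat Q:=\Pi_{Z_2}(\hat P)$. Because $\Pi_{Z_2}|_W$ is a linear map between finite dimensional spaces, $\hat Q$ is a polyhedron in the finite dimensional subspace $\Pi_{Z_2}(W)$ of $Z_2$ (linear images of polyhedra are polyhedra, cf.\ \cite[Theorem~19.3]{Ro}). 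Decomposing each $p\in\hat P\subset Z=E'\oplus Z_2$ as $p=p_1+\Pi_{Z_2}(p)$ with $p_1\in E'$ shows at once that $E'+\hat P=E'+\hat Q$. Now apply the converse of Proposition~\ref{pro2.1} from the previous paragraph with $Z_1:=E'$: $E'+\hat Q$, and hence $E+H+\hat P=E'+\hat P$, is a polyhedron in $Z$.

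For (ii), write $P=\{z\in Z:\langle u_i^*,z\rangle\le s_i,\ i\in\overline{1m}\}$ and let $N:=\bigcap_{i=1}^m\mathcal{N}(u_i^*)$, the kernel of the continuous linear map $z\mapsto(\langle u_i^*,z\rangle)_{i\in\overline{1m}}$ from $Z$ to $\mathbb{R}^m$; hence $N$ is closed with $\mathrm{codim}(N)\le m<\infty$. Choosing a finite dimensional complement $Z_0$ of $N$ and applying Proposition~\ref{pro2.1} (with $Z_1:=N$, $Z_2:=Z_0$) yields $P=N+\hat P$, where $\hat P:=\{z\in Z_0:\langle u_i^*,z\rangle\le s_i,\ i\in\overline{1m}\}$ is a polyhedron in the finite dimensional subspace $Z_0$. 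Consequently $H+P=H+N+\hat P$, and part (i) (applied with $E:=N$) shows that $H+P$ is a polyhedron in $Z$.

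The whole argument is elementary; the only place demanding care is the bookkeeping in (i) and in its preliminary converse of Proposition~\ref{pro2.1} — namely verifying that the lifted functionals $z_i^*$ vanish on $E'$, that the polyhedron they define is exactly $E'+\hat P$ and nothing larger, and that the set identity $E'+\hat P=E'+\hat Q$ holds. Once these routine checks are done, (ii) is an immediate reduction to (i) via the splitting $P=N+\hat P$.
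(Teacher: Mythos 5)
Your proof is correct, and since the paper only asserts that Proposition~\ref{pro2.3} ``can be easily proved,'' your argument is exactly the routine completion one would expect: reduce to a finite-dimensional polyhedron via Proposition~\ref{pro2.1}, verify the converse direction by lifting functionals through the continuous projection $\Pi_{Z_2}$ (continuity being guaranteed because the complement is finite dimensional and the kernel is closed), and use the closedness of $E+H$ obtained from the finite-dimensional quotient $Z/E$. All the bookkeeping steps you flag — $E'+\hat P=E'+\hat Q$, the vanishing of the lifted functionals on $E'$, and the reduction of (ii) to (i) via $P=N+\hat P$ — check out.
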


The following lemma is useful in the proofs of some main results.

\begin{lem}
Let  $C_1,\cdots,C_m$ be closed sets in a normed space $Z$ such that $B(x_0,r_0)\subset \bigcup\limits_{i=1}^mC_i$ for some $x_0\in Z$ and $r_0>0$. Then there exists $i_0\in\overline{1m}$ such that $B(x_0,r_0)\cap{\rm int}(C_{i_0})\not=\emptyset$.
\end{lem}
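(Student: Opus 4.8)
The plan is to argue by contradiction using a Baire-category-type argument adapted to a single closed ball. Suppose, for contradiction, that $B(x_0,r_0)\cap{\rm int}(C_i)=\emptyset$ for every $i\in\overline{1m}$. Equivalently, each closed set $C_i\cap \overline{B(x_0,r_0)}$ has empty interior relative to the complete metric space $\overline{B(x_0,r_0)}$ (here we use that $\overline{B(x_0,r_0)}$, being a closed subset of a Banach space — wait, $Z$ is only normed; see the obstacle below). Granting completeness, each $C_i\cap\overline{B(x_0,r_0)}$ is closed and nowhere dense in $\overline{B(x_0,r_0)}$, so by the Baire category theorem their union cannot cover $\overline{B(x_0,r_0)}$, contradicting the hypothesis $B(x_0,r_0)\subset\bigcup_{i=1}^m C_i$. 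Hence some $i_0$ satisfies $B(x_0,r_0)\cap{\rm int}(C_{i_0})\neq\emptyset$.

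The first main step is to reduce ``${\rm int}(C_i)$ is empty inside the ball'' to ``$C_i$ has empty interior as a subset of the metric space $\overline{B(x_0,r_0)}$.'' If $C_i$ contained a nonempty open subset $U$ of $\overline{B(x_0,r_0)}$, then $U$ would contain a point $z$ together with a relative neighborhood; since $B(x_0,r_0)$ is open in $Z$, a point of $U\cap B(x_0,r_0)$ would be interior to $C_i$ in $Z$ (a small $Z$-ball around it lies in $B(x_0,r_0)\cap U\subset C_i$), giving $B(x_0,r_0)\cap{\rm int}(C_i)\neq\emptyset$. This contMapPath-proposition handles the translation cleanly. The second step is just the statement of the Baire category theorem for the complete metric space $\overline{B(x_0,r_0)}$: a countable (here finite) union of nowhere dense closed sets has empty interior, so it is a proper subset; since it contains $B(x_0,r_0)$, which is nonempty and open, we reach the contradiction.

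The main obstacle is the completeness issue: the lemma is stated for a general normed space $Z$, not a Banach space, and a closed ball in an incomplete normed space need not be a complete metric space, so Baire's theorem does not directly apply. There are two natural remedies. The cleanest is to pass to the completion $\widehat{Z}$ of $Z$: the sets $\overline{C_i}^{\,\widehat Z}$ (closures in $\widehat Z$) are closed in the Banach space $\widehat Z$, the closed ball $\overline{B_{\widehat Z}(x_0,r_0)}$ is complete, and one checks that $B_{\widehat Z}(x_0,r_0)\subset\bigcup_i \overline{C_i}^{\,\widehat Z}$ because $B_Z(x_0,r_0)$ is dense in $B_{\widehat Z}(x_0,r_0)$ and each $\overline{C_i}^{\,\widehat Z}$ is closed. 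Baire in $\widehat Z$ then yields an $i_0$ with ${\rm int}_{\widehat Z}(\overline{C_{i_0}}^{\,\widehat Z})\cap B_{\widehat Z}(x_0,r_0)\neq\emptyset$; one then descends back to $Z$ using density of $Z$ in $\widehat Z$ and closedness of $C_{i_0}$ in $Z$ to conclude $B_Z(x_0,r_0)\cap{\rm int}_Z(C_{i_0})\neq\emptyset$. Alternatively — and this is likely what the authors intend given that in the applications $Z$ is typically the Banach space occurring as the domain or a finite-dimensional target — one simply remarks that in all uses $Z$ is complete (or the relevant $C_i$'s lie in a finite-dimensional subspace), so Baire applies on the nose. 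I would present the completion argument as the rigorous route, keeping the descent step explicit since that is where one actually uses that each $C_i$ is closed in $Z$ rather than merely in $\widehat Z$.
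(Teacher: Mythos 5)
Your argument is correct, including the completion detour you flag as necessary: passing to $\widehat Z$, applying Baire to the closed ball there, and descending via $Z\cap\overline{C_{i_0}}^{\,\widehat Z}=C_{i_0}$ (which uses closedness of $C_{i_0}$ in $Z$) does yield the conclusion. However, the paper's proof is far more elementary and avoids the completeness issue entirely: since the $C_i$ are closed and $B(x_0,r_0)$ is open, the set $B(x_0,r_0)\setminus\bigcup_{i=1}^{m-1}C_i$ is open; by the covering hypothesis it is contained in $C_m$, hence in ${\rm int}(C_m)$. So either this set is nonempty, in which case $B(x_0,r_0)\cap{\rm int}(C_m)\neq\emptyset$ and one is done, or it is empty, in which case $B(x_0,r_0)\subset\bigcup_{i=1}^{m-1}C_i$ and one recurses on $m-1$ sets (the case $m=1$ being trivial since $B(x_0,r_0)\subset C_1$ forces $B(x_0,r_0)\subset{\rm int}(C_1)$). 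This two-line induction exploits the finiteness of the cover directly, works in an arbitrary topological space, and needs no Baire category, no completion, and no reduction to relative interiors. Your route buys nothing extra here (the statement is only about finite unions), so while it is rigorous, it is substantially heavier machinery than the problem requires; it is worth internalizing that ``an open set contained in a closed set is contained in its interior'' already does all the work.
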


\begin{proof}
By the assumption,  $B(x_0,r_0)\setminus \bigcup\limits_{i=1}^{m-1}C_i$ is open, and  $B(x_0,r_0)\setminus \bigcup\limits_{i=1}^{m-1}C_i\subset B(x_0,r_0)\cap{\rm int}(C_m)$. Hence either $B(x_0,r_0)\cap{\rm int}(C_m)\not=\emptyset$ or  $B(x_0,r_0)\subset \bigcup\limits_{i=1}^{m-1}C_i$, which implies clearly that the conclusion holds. The proof is complete.
\end{proof}

With the help of Lemma 2.1, we can prove the following interesting proposition.

\begin{pro}\label{pro2.5}
Let $C$ be a convex set in a normed space $Z$ and let  $F_1,\cdots,F_\nu$ be exposed faces of  a polyhedron $P$ in $Z$ such that $C\subset\bigcup\limits_{j=1}^\nu F_j$. Then there exists $j_0\in\overline{1\nu}$ such that $C\subset F_{j_0}$.
\end{pro}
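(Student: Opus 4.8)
The plan is to argue by induction on $\nu$, reducing the problem at each step to a smaller family of exposed faces by exploiting convexity of $C$ together with the fact that the exposed faces $F_j$ of $P$ are themselves intersections of $P$ with supporting hyperplanes. The base case $\nu=1$ is trivial. For the inductive step, suppose $C\subset\bigcup_{j=1}^\nu F_j$ with $\nu\ge2$. If $C\subset\bigcup_{j=1}^{\nu-1}F_j$ we are done by the induction hypothesis, so we may assume $C\not\subset\bigcup_{j=1}^{\nu-1}F_j$, i.e. there is a point $c_0\in C$ with $c_0\in F_\nu$ but $c_0\notin F_j$ for $j\in\overline{1(\nu-1)}$.

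The key step is to show that in fact $C\subset F_\nu$. To do this I would first pass to the finite-dimensional setting. Since $P$ is a polyhedron in $Z$, by the representation $P=\{z:\langle u_i^*,z\rangle\le s_i\}$ and Proposition~\ref{pro2.1} we may split $Z=Z_1\oplus Z_2$ with $Z_1\subset\bigcap_i\mathcal{N}(u_i^*)$, $\dim(Z_2)=\mathrm{codim}(Z_1)<\infty$, so that $P=Z_1+\hat P$ and every exposed face $F_j$ has the form $Z_1+\hat F_j$ for an exposed face $\hat F_j$ of the finite-dimensional polyhedron $\hat P$ (Corollary~\ref{pro2.4}). Writing $\Pi_{Z_2}$ for the projection onto $Z_2$, the set $\hat C:=\Pi_{Z_2}(C)$ is convex, $\hat C\subset\bigcup_{j=1}^\nu\hat F_j$, and $C\subset F_{j_0}\Leftrightarrow\hat C\subset\hat F_{j_0}$; so it suffices to prove the statement when $\dim(Z)<\infty$, where we may additionally take $\mathrm{rint}(C)\neq\emptyset$.

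So assume $\dim(Z)<\infty$ and pick $c_0\in\mathrm{rint}(C)$ (if $c_0\in C\setminus\mathrm{rint}(C)$ does not already lie in $F_\nu$, relabel so that the distinguished point does; the point is that $\mathrm{rint}(C)$ meets at least one $F_j$, and that one becomes $F_\nu$). Working inside the affine hull $\mathrm{aff}(C)$, choose a small ball $B(c_0,r_0)\cap\mathrm{aff}(C)\subset C\subset\bigcup_{j=1}^\nu F_j$. By Lemma~2.1 applied within $\mathrm{aff}(C)$ there is some $j_*$ with $B(c_0,r_0)\cap\mathrm{aff}(C)\cap\mathrm{int}_{\mathrm{aff}(C)}(F_{j_*})\neq\emptyset$; hence $\mathrm{aff}(C)\cap F_{j_*}$ has nonempty interior in $\mathrm{aff}(C)$, which forces $\mathrm{aff}(C)\subset\mathrm{aff}(F_{j_*})$. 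Now $F_{j_*}=\{z\in P:\langle x^*,z\rangle=\sup_{x\in P}\langle x^*,x\rangle=:\alpha\}$ for some $x^*\in Z^*$, and since $\mathrm{aff}(C)\subset\mathrm{aff}(F_{j_*})\subset\{z:\langle x^*,z\rangle=\alpha\}$ while $C\subset P$, we get $\langle x^*,c\rangle=\alpha$ for all $c\in C$, i.e. $C\subset F_{j_*}$. This is exactly the desired conclusion (and it makes the induction unnecessary — the direct argument already yields $C\subset F_{j_*}$ for a single $j_*$).

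The main obstacle is the passage from the supporting hyperplane of $F_{j_*}$ back to the conclusion $C\subset F_{j_*}$: one must ensure that the relative interior point of $C$ detected by Lemma~2.1 lies in the relative interior of $F_{j_*}$ \emph{as a face}, so that the whole affine hull of $C$ is pinned to the exposing hyperplane; the finite-dimensional reduction and the use of $\mathrm{rint}(C)\neq\emptyset$ are precisely what make this step clean. A minor point to handle carefully is that Lemma~2.1 is stated for metric balls in $Z$, so one applies it to the normed space $\mathrm{aff}(C)-c_0$ (a finite-dimensional subspace) with the faces $F_j\cap\mathrm{aff}(C)$, each of which is closed.
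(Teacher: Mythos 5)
Your argument is correct and follows essentially the same route as the paper's proof: reduce to finite dimensions via Proposition \ref{pro2.1} and Corollary \ref{pro2.4}, apply Lemma 2.1 inside the affine hull (the paper uses $Z_3={\rm span}(\Pi_{Z_2}(C)-\hat x)$, which is your ${\rm aff}(C)-c_0$) to find a face containing a relatively open ball of that affine hull, and then use linearity of the exposing functional to pin all of $C$ to the exposing hyperplane. The only cosmetic difference is your initial induction scaffolding, which, as you note yourself, the direct argument renders unnecessary.
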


\begin{proof}
By Proposition \ref{pro2.1}, there exist two closed subspaces $Z_1$ and $Z_2$ of $Z$ and a polyhedron $\hat P$ in $Z_2$ such that (\ref{1}) and (\ref{2})   hold. Hence, by Corollary \ref{pro2.4}, there exists an exposed face $\hat F_j$ of $\hat P$ such that $F_j=Z_1+\hat F_j$ ($j\in\overline{1\nu}$), and so
$C\subset\bigcup\limits_{j=1}^\nu(Z_1+\hat F_j)$. It follows from (\ref{P2.1}) and (\ref{1}) that $\Pi_{Z_2}(C)\subset\bigcup\limits_{j=1}^\nu\hat F_j$. Thus, it suffices to show that $\Pi_{Z_2}(C)\subset \hat F_{j_0}$ for some $j_0\in\overline{1\nu}$. To prove this,  take $(\hat u_j^*,\alpha_j)\in Z_2^*\times\mathbb{R}$ such that
\begin{equation}\label{F2.21}
\alpha_j=\sup\limits_{x_2\in \hat P}\langle \hat u_j^*,x_2\rangle\;\;{\rm and}\;\; \hat F_j=\{x_2\in\hat P:\;\langle \hat u_j^*,x_2\rangle=\alpha_j\}\quad\forall j\in\overline{1\nu}.
\end{equation}
Since $\Pi_{Z_2}(C)$ is a convex set in the finite dimensional space $Z_2$, ${\rm rint}(\Pi_{Z_2}(C))\not=\emptyset$. Take  $\hat x\in {\rm rint}(\Pi_{Z_2}(C))$ and let $Z_3:={\rm span}(\Pi_{Z_2}(C)-\hat x)$. Then $Z_3$ is a subspace of $Z_2$ and there exists $\delta>0$ such that
$\hat x+B_{Z_3}(0,\delta)\subset \Pi_{Z_2}(C)\subset\bigcup\limits_{j=1}^\nu\hat F_j$, that is,
$$B_{Z_3}(0,\delta)\subset\bigcup\limits_{j=1}^\nu(\hat F_j-\hat x)\cap Z_3.$$
Hence, by Lemma 2.1, there exist $z_3\in B_{Z_3}(0,\delta)$, $\varepsilon\in(0,\;+\infty)$ and $j_0\in\overline{1\nu}$ such that
$z_3+B_{Z_3}(0,\varepsilon)\subset (\hat F_{j_0}-\hat x)\cap Z_3$. Letting $\hat u:=\hat x+z_3$, one has $\hat u+B_{Z_3}(0,\varepsilon)\subset \hat F_{j_0}$.
This and  (\ref{F2.21}) imply  that $\langle \hat u^*_{j_0},\hat v\rangle=0$ for all $\hat v\in B_{Z_3}(0,\varepsilon)$ and so $\langle \hat u^*_{j_0},\hat v\rangle=0$ for all $\hat v\in {Z_3}$. Hence,
$\Pi_{Z_2}(C)\subset \hat x+Z_3=\hat u+Z_3\subset\{x_2\in Z_2:\;\langle \hat u_{j_0}^*,x_2\rangle=\alpha_{j_0}\}$. Noting that $\Pi_{Z_2}(C)\subset \bigcup\limits_{j=1}^\nu \hat F_j\subset \hat P$, it follows that
$$\Pi_{Z_2}(C)\subset \hat P\cap \{x_2\in Z_2:\;\langle \hat u_{j_0}^*,x_2\rangle=\alpha_{j_0}\}=\hat F_{j_0}.$$
The proof is complete.
\end{proof}

We also need  the following proposition.

\begin{pro}\label{pro2.6}
Let $P_i$ be polyhedra in a normed space $Z$ such that  ${\rm int}(P_i)\not=\emptyset$ ($i=1,\cdots,m$).
Then  there exist polyhedra $Q_j$ in $Z$  with ${\rm int}(Q_j)\not=\emptyset$ ($j=1,\cdots,\nu$) such that
$\bigcup\limits_{i=1}^mP_i=\bigcup\limits_{j=1}^\nu Q_j$  and ${\rm int}(Q_{j})\cap Q_{j'}=\emptyset$ for all $j,j'\in\overline{1\nu}\;{\rm with}\;j\not=j'$.
\end{pro}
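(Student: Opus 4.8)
The plan is to cut $Z$ into the closed cells of the hyperplane arrangement generated by all the inequalities defining $P_1,\cdots,P_m$ and to keep only the full‑dimensional cells. Since each $P_i$ is nonempty, I would first delete from its description every constraint with a zero functional, so that $P_i=\{x\in Z:\ \langle u_{ik}^*,x\rangle\le t_{ik},\ k\in\overline{1q_i}\}$ with all $u_{ik}^*\not=0$. Listing the pairs $(u_{ik}^*,t_{ik})$ over $i\in\overline{1m}$, $k\in\overline{1q_i}$ as $(v_1^*,\beta_1),\cdots,(v_N^*,\beta_N)$ and letting $S_i\subset\overline{1N}$ be the set of indices coming from $P_i$, I would set, for each $\sigma\in\{-1,1\}^N$,
$$Q_\sigma:=\{x\in Z:\ \sigma(s)\langle v_s^*,x\rangle\le\sigma(s)\beta_s\ \ \forall s\in\overline{1N}\},$$
which is a polyhedron in $Z$. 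Writing $\sigma_x(s):=1$ when $\langle v_s^*,x\rangle\le\beta_s$ and $\sigma_x(s):=-1$ otherwise, one has $x\in Q_{\sigma_x}$ for every $x$, and $x\in P_i$ precisely when $\sigma_x|_{S_i}\equiv1$; these two observations give $Z=\bigcup_{\sigma}Q_\sigma$ and $\bigcup_{i=1}^m P_i=\bigcup_{\sigma\in\Sigma}Q_\sigma$, where $\Sigma$ denotes the set of those $\sigma$ for which $\sigma|_{S_i}\equiv1$ for some $i\in\overline{1m}$. The family I would propose is $\{Q_\sigma:\ \sigma\in\Sigma,\ {\rm int}(Q_\sigma)\not=\emptyset\}$, which I relabel as $Q_1,\cdots,Q_\nu$.

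Two properties then have to be verified. For the disjointness, given $\sigma\not=\sigma'$ I pick $s$ with $\sigma(s)\not=\sigma'(s)$, say $\sigma(s)=1$ and $\sigma'(s)=-1$; then $Q_{\sigma'}\subset\{x:\ \langle v_s^*,x\rangle\ge\beta_s\}$, whereas any $y\in{\rm int}(Q_\sigma)$ satisfies $\langle v_s^*,y\rangle<\beta_s$ — otherwise, since $v_s^*\not=0$, a small perturbation of $y$ in a direction along which $\langle v_s^*,\cdot\rangle$ increases would stay in $Q_\sigma$ yet violate $\langle v_s^*,\cdot\rangle\le\beta_s$ (alternatively, this follows from Corollary \ref{coro2.1} applied to $Q_\sigma$). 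Hence ${\rm int}(Q_\sigma)\cap Q_{\sigma'}=\emptyset$, and since the pair was arbitrary we obtain ${\rm int}(Q_j)\cap Q_{j'}=\emptyset$ for all $j\not=j'$. The nonemptiness of each ${\rm int}(Q_j)$ is part of the construction.

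The heart of the matter — and the step I expect to be the real obstacle — is to show that throwing away the lower‑dimensional cells does not shrink the union, i.e. $\bigcup_{j=1}^\nu Q_j=\bigcup_{i=1}^m P_i$; only ``$\subset$'' is immediate. I would fix $x\in P_{i_0}$ and approximate it by interior points of $P_{i_0}$ that miss the whole arrangement: since $P_{i_0}$ is closed and convex with nonempty interior, $\overline{{\rm int}(P_{i_0})}=P_{i_0}$, and since $\bigcup_{s=1}^N\{z\in Z:\ \langle v_s^*,z\rangle=\beta_s\}$ is a finite union of proper closed affine subspaces, hence nowhere dense, the set ${\rm int}(P_{i_0})\setminus\bigcup_{s=1}^N\{z:\ \langle v_s^*,z\rangle=\beta_s\}$ is dense in ${\rm int}(P_{i_0})$ and its closure still contains $x$. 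Choosing $x_n\to x$ in this set, every defining inequality of $Q_{\sigma_{x_n}}$ holds strictly at $x_n$, so $x_n\in{\rm int}(Q_{\sigma_{x_n}})$, and $x_n\in P_{i_0}$ forces $\sigma_{x_n}|_{S_{i_0}}\equiv1$; thus $Q_{\sigma_{x_n}}$ is one of $Q_1,\cdots,Q_\nu$. Because there are only finitely many sign vectors, some $\sigma^*$ equals $\sigma_{x_n}$ for infinitely many $n$, and then $x=\lim x_n\in Q_{\sigma^*}$ by closedness of $Q_{\sigma^*}$. This gives $x\in\bigcup_{j=1}^\nu Q_j$ and completes the argument; the only delicate point is the density/avoidance argument guaranteeing that the approximating sign vectors index full‑dimensional cells.
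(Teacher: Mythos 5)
Your proof is correct, and it takes a genuinely different (and more direct) route than the paper's. The paper argues by induction on $m$: having decomposed $\bigcup_{i=1}^{n}P_i$ into pieces $H_1,\dots,H_l$ with the required properties, it cuts $P_{n+1}\setminus\bigcup_{i}H_i$ into polyhedra by writing each complement $Z\setminus H_i$ as a disjoint ``staircase'' union $\bigcup_{k}\Lambda^i_k\cap\{x:\langle x^*_{ik},x\rangle>t_{ik}\}$ and intersecting these over $i$; the tuples whose cells have empty interior are discarded, and the fact that no points are lost is extracted from the inclusions $P_{n+1}\setminus H_i\subset{\rm cl}\big({\rm int}(P_{n+1})\setminus H_i\big)$ and ${\rm cl}(\widetilde{Q}^i_k)\subset Q^i_k$. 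You instead perform the whole decomposition in one shot, via the sign vectors of the arrangement of all $N$ defining hyperplanes; this removes the induction entirely and makes the disjointness of interiors immediate, since two cells with different sign vectors are separated by a hyperplane on which they disagree and interior points of a cell satisfy every defining inequality strictly. For the crux --- that discarding the lower-dimensional cells does not shrink the union --- the paper uses the closure inclusions above, while you use that the union of the $N$ hyperplanes is closed and nowhere dense, so that ${\rm int}(P_{i_0})$ minus the arrangement is dense in $P_{i_0}=\,$${\rm cl}({\rm int}(P_{i_0}))$, followed by a pigeonhole on the finitely many sign vectors and closedness of the cells; both arguments are sound. The trade-off is minor: your construction may produce up to $2^N$ cells where the incremental one typically produces fewer, and the paper's version has the feature that the pieces already covering $\bigcup_{i=1}^{n}P_i$ are left untouched when $P_{n+1}$ is adjoined, but for the purposes of this proposition the two constructions are interchangeable.
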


\begin{proof}
The conclusion holds clearly when $m=1$.
Given a natural number $n$, suppose that the conclusion holds when $m=n$. Let  $P_1,\cdots,P_n,P_{n+1}$ be arbitrary $n+1$  polyhedra in  $Z$ such that each ${\rm int}(P_i)$ is nonempty. Then, by induction, it suffices to show that there exist polyhedra $Q_j$ in $Z$  with ${\rm int}(Q_j)\not=\emptyset$ ($j=1,\cdots,\nu$) such that
$\bigcup\limits_{i=1}^{n+1}P_i=\bigcup\limits_{j=1}^\nu Q_j$ and ${\rm int}(Q_{j})\cap Q_{j'}=\emptyset$ for all $j,j'\in\overline{1\nu}$  with $j\not=j'$.
To do this, take
polyhedra $H_1,\cdots,H_l$ in $Z$ such that
\begin{equation}\label{2.21'}
\quad\bigcup\limits_{i=1}^{n}P_i=\bigcup\limits_{i=1}^l H_i,\;{\rm int}(H_i)\not=\emptyset\;\;{\rm and}\;\;H_i\cap{\rm int}(H_{i'})=\emptyset\;\;\;\forall i,i'\in\overline{1l}\;{\rm with}\;i\not=i'.
\end{equation}
If ${\rm int}(P_{n+1})\subset \bigcup\limits_{i=1}^l H_i$, then $P_{n+1}\subset \bigcup\limits_{i=1}^l H_i$ and so $\bigcup\limits_{i=1}^{n+1}P_i=\bigcup\limits_{i=1}^l H_i$; hence the conclusion is trivially true. Next suppose that  ${\rm int}(P_{n+1})\nsubseteq \bigcup\limits_{i=1}^lH_i$.
For each $i\in\overline{1l}$, take $x_{ij}^*\in Z^*\setminus\{0\}$ and $t_{ij}\in \mathbb{R}$ ($j=1,\cdots,\kappa_i$)
such that $H_i=\bigcap\limits_{j=1}^{\kappa_i}\{x\in Z:\;\langle x_{ij}^*,x\rangle\leq t_{ij}\}$. Then
$$Z\setminus H_i=\bigcup\limits_{j=1}^{\kappa_i}\{x\in Z:\;\langle x_{ij}^*,x\rangle>t_{ij}\}=\bigcup\limits_{k=1}^{\kappa_i}\Lambda^i_k\cap\{x\in Z:\;\langle x_{ik}^*,x\rangle>t_{ik}\},$$
where $\Lambda^i_1:=Z$ and  $\Lambda^i_k:=\bigcap\limits_{j=1}^{k-1}\{x\in Z:\;\langle x_{ij}^*,x\rangle\leq t_{ij}\}$ for $k=2,\cdots,\kappa_i$.
Since ${\rm int}(P_{n+1})\setminus H_i={\rm int}(P_{n+1})\cap (Z\setminus H_i)$,
\begin{equation}\label{Qi}
{\rm int}(P_{n+1})\setminus H_i=\bigcup\limits_{k=1}^{\kappa_i} {\rm int}(P_{n+1})\cap\Lambda^i_{k}\cap \{x\in Z:\;\langle x_{ik}^*,x\rangle>t_{ik}\}.
\end{equation}
For each $k\in\overline{1\kappa_i}$, let $Q_k^i:=P_{n+1}\cap\Lambda^i_{k}\cap\{x\in Z:\;\langle x_{ik}^*,x\rangle\geq t_{ik}\}$. Then each $Q_k^i$ is a polyhedron in $Z$ and, by Corollary \ref{pro2.1},
\begin{equation}\label{int}
{\rm int}(Q_k^i)={\rm int}(P_{n+1})\cap{\rm int}(\Lambda^i_{k})\cap\{z\in Z:\;\langle x_{ik}^*,z\rangle>t_{ik}\}.
\end{equation}
Since ${\rm int}(\Lambda^i_{k})=\{z\in Z:\;\langle x_{ij}^*,x\rangle< t_{ij},\; j=1,\cdots,k-1\}$,
\begin{equation}\label{K}
Q_k^i\cap {\rm int}(Q_{k'}^i)=\emptyset\quad\forall k,k'\in \overline{1\kappa_i}\;{\rm with}\;k\not=k'.
\end{equation}
Let
$$\Gamma:=\left\{(k_1,\cdots,k_l)\in \overline{1\kappa_1}\times\cdots\times \overline{1\kappa_l}:\;\bigcap\limits_{i=1}^l{\rm int}(Q_{k_i}^i)\not=\emptyset\right\}$$ and
$Q_{(k_1,\cdots,k_l)}:=\bigcap\limits_{i=1}^lQ_{k_i}^i$ for all $(k_1,\cdots,k_l)\in \Gamma$.
Then, each $Q_{(k_1,\cdots,k_l)}$ is a polyhedron in $Z$ with ${\rm int}(Q_{(k_1,\cdots,k_l)})=\bigcap\limits_{i=1}^l{\rm int}(Q_{k_i}^i)$ (thanks to Corollary 2.1). Hence, by (\ref{K}),
\begin{equation}\label{I2.12}
Q_{(k_1,\cdots,k_l)}\cap {\rm int}(Q_{(k'_1,\cdots,k'_l)})=\emptyset
\end{equation}
for all $(k_1,\cdots,k_l)\in\Gamma$ and  all $(k'_1,\cdots,k'_l)\in\Gamma\setminus\{(k_1,\cdots,k_l)\}$.
Let
$$\widetilde Q_k^i:={\rm int}(P_{n+1})\cap\Lambda^i_{k}\cap\{x\in Z:\;\langle x_{ik}^*,x\rangle>t_{ik}\}.$$
Then ${\rm int}(P_{n+1})\setminus H_i=\bigcup\limits_{k=1}^{\kappa_i}\widetilde Q_k^i$ (by (\ref{Qi})) and ${\rm cl}(\widetilde Q_k^i)\subset Q_k^i$. For any  $(k_1,\cdots,k_l)\in I_1\times\cdots\times I_l$, it is easy from (\ref{int}) to verify that $\bigcap\limits_{i=1}^l\widetilde Q_{k_i}^i\not=\emptyset$ if and only if $\bigcap\limits_{i=1}^l{\rm int}(Q_{k_i}^i)\not=\emptyset$. Noting that  $P_{n+1}\setminus H_i\subset{\rm cl}({\rm int}(P_{n+1})\setminus H_i)$, by (\ref{Qi}) and the definition of $Q_{(k_1,\cdots,k_l)}$, one has
$$P_{n+1}\setminus\bigcup\limits_{i=1}^lH_i=\bigcap\limits_{i=1}^l(P_{n+1}\setminus H_i)\subset\bigcup\limits_{(k_1,\cdots,k_l)\in\Gamma}\bigcap\limits_{i=1}^l{\rm cl}(\widetilde Q_{k_i}^i)\subset \bigcup\limits_{(k_1,\cdots,k_l)\in\Gamma}Q_{(k_1,\cdots,k_l)}\subset P_{n+1}.$$
It follows from (\ref{2.21'}) that $\bigcup\limits_{i=1}^{n+1}P_i=
\left(\bigcup\limits_{i=1}^lH_i\right)\cup\left(\bigcup\limits_{(k_1,\cdots,k_l)\in\Gamma}Q_{(k_1,\cdots,k_l)}\right)$. By  (\ref{I2.12}) and (\ref{2.21'}),  this shows that the conclusion also holds when $m=n+1$.
The proof is complete.
\end{proof}

For $(u_1^*,s_1),\cdots,(u_n^*,s_n)\in Z^*\times\mathbb{R}$ and $P=\{z\in Z:\;\langle u_i^*,z\rangle\leq s_i,\;i\in\overline{1n}\}$,
we say that $(u_i^*,s_i)$ is a redundant generator of $P$ if
$P=\big\{z\in Z:\;\langle u_j^*,z\rangle\leq s_j,\;j\in\overline{1n}\setminus\{i\}\big\}$
(cf. \cite{Te, KTZ}). For convenience, we adopt the following notion.

{\bf Definition 2.1} {\it We say that $\{(u_1^*,s_1),\cdots,(u_n^*,s_n)\}\subset Z^*\times \mathbb{R}$ is  a prime generator group of  a polyhedron $P$ in a normed space $Z$  if
\begin{equation}\label{P2.2}
P=\{z\in Z:\;\langle u_i^*,z\rangle\leq s_i,\;i\in\overline{1n}\}
\end{equation}
and $(u_i^*,s_i)$ is not a redundant generator of $P$ for all $i\in\overline{1n}$.}

Every  polyhedron has a prime generator group (cf. \cite{BCL, Te}). It is clear that if $\{(u_1^*,s_1),\cdots,(u_n^*,s_n)\}\subset Z^*\times \mathbb{R}$ is   a prime generator group of  $P$ then
\begin{equation}\label{P2.3}
P\not=\{z\in Z:\;\langle u_i^*,z\rangle\leq s_i,\;i\in\overline{1n}\setminus\{j\}\}\quad\forall j\in\overline{1n}.
\end{equation}
In the remainder of this paper, we assume that every polyhedron $P$ of $Z$ is not equal to $Z$. So, it is clear that $u_i^*\not=0$ for all $i\in\overline{1n}$
whenever $\{(u_1^*,s_1),\cdots,(u_n^*,s_n)\}$  is a prime generator group of  $P$. Moreover, we have the following lemma.

\begin{lem}\label{lemN2.1}
Let $\{(u_1^*,s_1),\cdots,(u_n^*,s_n)\}$ be a prime generator group of  a polyhedron $P$ in a normed space $Z$ and  suppose that ${\rm int}(P)\not=\emptyset$. Then $u_{j_1}^*$ and $u_{j_2}^*$ are linearly independent whenever  $j_1\in\overline{1n}$ and  $j_2\in\overline{1n}\setminus\{j_1\}$ satisfy $\langle u_{j_1}^*,\bar x\rangle=s_{j_1}$ and $\langle u_{j_2}^*,\bar x\rangle=s_{j_2}$ for some $\bar x\in P$.
\end{lem}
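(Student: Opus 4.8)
The plan is to argue by contradiction, exploiting the non-redundancy of the two generators $(u_{j_1}^*,s_{j_1})$ and $(u_{j_2}^*,s_{j_2})$ together with the existence of a boundary point $\bar x\in P$ at which both constraints are active. Suppose $u_{j_1}^*$ and $u_{j_2}^*$ are linearly dependent. Since the prime generator group assumption forces $u_{j_1}^*\neq0$ and $u_{j_2}^*\neq0$, there is a scalar $\lambda\neq0$ with $u_{j_2}^*=\lambda u_{j_1}^*$. Evaluating at $\bar x$ gives $s_{j_2}=\langle u_{j_2}^*,\bar x\rangle=\lambda\langle u_{j_1}^*,\bar x\rangle=\lambda s_{j_1}$, so in fact $(u_{j_2}^*,s_{j_2})=\lambda(u_{j_1}^*,s_{j_1})$ as elements of $Z^*\times\mathbb{R}$. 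Thus the two inequalities $\langle u_{j_1}^*,z\rangle\leq s_{j_1}$ and $\langle u_{j_2}^*,z\rangle\leq s_{j_2}$ are proportional with a positive or negative proportionality constant.

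The case $\lambda>0$ is the easy one: then the inequality indexed by $j_2$ is $\lambda\langle u_{j_1}^*,z\rangle\leq\lambda s_{j_1}$, which is equivalent to the inequality indexed by $j_1$, so $(u_{j_2}^*,s_{j_2})$ is a redundant generator of $P$, contradicting that $\{(u_1^*,s_1),\cdots,(u_n^*,s_n)\}$ is a prime generator group. The case $\lambda<0$ is where the hypotheses ${\rm int}(P)\neq\emptyset$ and the activeness at $\bar x$ are genuinely needed. Here the two constraints read $\langle u_{j_1}^*,z\rangle\leq s_{j_1}$ and $\langle u_{j_1}^*,z\rangle\geq s_{j_1}$ (dividing the $j_2$ constraint by $\lambda<0$ and using $s_{j_2}=\lambda s_{j_1}$), so together they force $\langle u_{j_1}^*,z\rangle=s_{j_1}$ for every $z\in P$. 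Since $u_{j_1}^*\neq0$, the set $\{z:\langle u_{j_1}^*,z\rangle=s_{j_1}\}$ is a proper affine hyperplane, hence has empty interior; as $P$ is contained in it, ${\rm int}(P)=\emptyset$, contradicting the hypothesis. (Note that in this case $\bar x$ is not even needed beyond guaranteeing $P\neq\emptyset$; the activeness at a common point $\bar x$ is what made the proportionality constant pin down $s_{j_2}=\lambda s_{j_1}$ in the first place, which is the crux of both cases.)

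The only subtlety — and the step I would be most careful about — is the deduction $s_{j_2}=\lambda s_{j_1}$: without a point $\bar x\in P$ activating both constraints, linear dependence of $u_{j_1}^*$ and $u_{j_2}^*$ alone does not force the right-hand sides to be proportional, and then neither redundancy nor emptiness of interior need follow (two parallel half-spaces with non-matching offsets are both genuinely needed and have nonempty interior). So the logical heart of the lemma is that the common active point upgrades ``parallel normals'' to ``proportional constraints,'' after which the prime-generator property handles $\lambda>0$ and the nonempty-interior property handles $\lambda<0$. I would therefore present the argument in exactly that order: first derive proportionality of the full pairs from $\bar x$, then split on the sign of $\lambda$.
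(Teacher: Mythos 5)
Your proof is correct and uses essentially the same argument as the paper: derive $(u_{j_2}^*,s_{j_2})=\lambda(u_{j_1}^*,s_{j_1})$ from the common active point $\bar x$, then rule out $\lambda>0$ by non-redundancy and $\lambda<0$ by the nonempty interior (the paper merely orders these two steps slightly differently, using the interior point first to force $\lambda>0$ and then concluding redundancy).
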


\begin{proof}
Suppose to the contrary that there exist $j_1\in\overline{1n}$, $j_2\in\overline{1n}\setminus\{j_1\}$, $\bar x\in P$ and $\alpha\in\mathbb{R}\setminus\{0\} $ such that
\begin{equation}\label{P2.14}
\langle u_{j_1}^*,\bar x\rangle=s_{j_1},\; \langle u_{j_2}^*,\bar x\rangle=s_{j_2}\;{\rm and}\; u_{j_2}^*=\alpha u_{j_1}^*.
\end{equation}
Take $x_0\in{\rm int}(P)$ and $r>0$ such that $B(x_0,r)\subset P$. Then $\langle u_{j_1}^*,x\rangle\leq s_{j_1}$ and $\langle u_{j_2}^*,x\rangle\leq s_{j_2}$ for all $x\in B(x_0,r)$. This and (\ref{P2.14}) imply that $\alpha>0$ and
$$\{x\in Z:\;\langle u_{j_1}^*,x\rangle \leq s_{j_1}\}=\{x\in Z:\;\langle u_{j_2}^*,x\rangle \leq s_{j_2}\}.$$
Thus,
$P=\{x\in Z:\;\langle u_i^*,x\rangle\leq s_i,\;i\in\overline{1n}\setminus\{j_{1}\}\}$, contradicting (\ref{P2.3}).
\end{proof}

\begin{lem}\label{lem2.1}
Let $\{(u_1^*,s_1),\cdots,(u_n^*,s_n)\}$ be a prime generator group of  a polyhedron $P$ in a normed space $Z$. Then, for each $j\in\overline{1n}$,
\begin{equation}\label{F1}
F_j(P):=P\cap\{x\in Z:\;\langle u_j^*,x\rangle=s_j\}\not=\emptyset.
\end{equation}
\end{lem}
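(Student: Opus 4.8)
The plan is to exploit directly the non-redundancy encoded in the notion of a prime generator group. Fix $j\in\overline{1n}$ and set $Q_j:=\{z\in Z:\langle u_i^*,z\rangle\le s_i,\ i\in\overline{1n}\setminus\{j\}\}$. Trivially $P\subseteq Q_j$, and since $(u_j^*,s_j)$ is not a redundant generator of $P$, relation (\ref{P2.3}) gives $P\ne Q_j$; hence $P$ is a proper subset of $Q_j$ and I may choose a point $\bar x\in Q_j\setminus P$. As $\bar x$ satisfies every defining inequality of $P$ in (\ref{P2.2}) except possibly the $j$-th one, the failure $\bar x\notin P$ forces $\langle u_j^*,\bar x\rangle>s_j$. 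I then fix any $x_0\in P$ (here $P$ is nonempty, as is tacitly assumed throughout) and look along the segment joining $x_0$ to $\bar x$.

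The main step is a one-variable continuity argument on that segment. Put $\varphi(t):=\langle u_j^*,(1-t)x_0+t\bar x\rangle=(1-t)\langle u_j^*,x_0\rangle+t\langle u_j^*,\bar x\rangle$, which is affine, hence continuous, on $[0,1]$, with $\varphi(0)=\langle u_j^*,x_0\rangle\le s_j$ and $\varphi(1)=\langle u_j^*,\bar x\rangle>s_j$. By the intermediate value theorem there is $t_0\in[0,1)$ with $\varphi(t_0)=s_j$; set $z_0:=(1-t_0)x_0+t_0\bar x$. It then remains to check $z_0\in P$. The $j$-th inequality holds with equality by the choice of $t_0$; and for each $i\in\overline{1n}\setminus\{j\}$ both $x_0$ (a point of $P\subseteq Q_j$) and $\bar x$ (a point of $Q_j$) satisfy $\langle u_i^*,\cdot\rangle\le s_i$, so, since $0\le t_0\le 1$, convexity yields $\langle u_i^*,z_0\rangle=(1-t_0)\langle u_i^*,x_0\rangle+t_0\langle u_i^*,\bar x\rangle\le s_i$. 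Therefore $z_0\in P$ and $\langle u_j^*,z_0\rangle=s_j$, i.e.\ $z_0\in F_j(P)$, which proves $F_j(P)\ne\emptyset$.

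I do not anticipate a genuine obstacle; the argument is elementary. The one delicate point is the translation of the abstract hypothesis ``$(u_j^*,s_j)$ is not a redundant generator'' into the existence of a concrete witness $\bar x$ that violates the $j$-th inequality and no other — which is exactly what (\ref{P2.3}) supplies — together with the observation that taking $\bar x$ inside $Q_j$, rather than merely outside $P$, is what makes the convex combination $z_0$ automatically obey all the remaining inequalities. Everything else is routine bookkeeping over the index set $\overline{1n}$.
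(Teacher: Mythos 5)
Your argument is correct and is exactly the reasoning the paper has in mind: the text declares the lemma ``immediate from Definition 2.1,'' and the witness-plus-intermediate-value-theorem step you spell out is precisely the segment argument the authors themselves use inside the proof of the next lemma (the implication (i)$\Rightarrow$(ii) there). The only caveat is the one you already flag, namely that $P\neq\emptyset$ must be tacitly assumed, since an empty polyhedron can have a prime generator group in the stated sense and would falsify the conclusion.
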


Lemma \ref{lem2.1} is immediate from Definition 2.1.   The following two lemmas will be quite useful in the proof of our main result.

\begin{lem}\label{lem2.2}
Let $\{(u_1^*,s_1),\cdots,(u_n^*,s_n)\}$ be a prime generator group
of a polyhedron $P$ in a normed space $Z$. Let $F_j(P)$ be as in (\ref{F1}) and
\begin{equation}\label{F2}
F_j^\circ(P):=\{z\in Z:\;\langle u_j^*,z\rangle=s_j\;{\rm and}\;\langle u_i^*,z\rangle<s_i,\;i\in\overline{1n}\setminus\{j\}\}
\end{equation}
for all $j\in\overline{1n}$. Then the following statements are equivalent:\\
(i) ${\rm int}(P)\not=\emptyset$.\\
(ii) $F_j(P)={\rm cl}(F_j^\circ(P))$ for all $j\in\overline{1n}$.\\
(iii) $F_j^\circ(P)\not=\emptyset$ for all $j\in\overline{1n}$.\\
(iv) $F_{j_0}^\circ(P)\not=\emptyset$  for some $j_0\in\overline{1n}$.
\end{lem}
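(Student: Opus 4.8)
The plan is to prove the cyclic chain $(i)\Rightarrow(ii)\Rightarrow(iii)\Rightarrow(iv)\Rightarrow(i)$, since $(ii)$, $(iii)$ and $(iv)$ are the natural intermediate statements and each implication is short. The only serious work is in $(i)\Rightarrow(ii)$ and in $(iv)\Rightarrow(i)$; the implications $(ii)\Rightarrow(iii)$ and $(iii)\Rightarrow(iv)$ are essentially trivial, the former because $F_j(P)\ne\emptyset$ by Lemma \ref{lem2.1} so its closure being $\mathrm{cl}(F_j^\circ(P))$ forces $F_j^\circ(P)\ne\emptyset$, the latter by specialization.

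For $(i)\Rightarrow(ii)$, fix $j\in\overline{1n}$. The inclusion $\mathrm{cl}(F_j^\circ(P))\subset F_j(P)$ is immediate since $F_j^\circ(P)\subset F_j(P)$ and $F_j(P)$ is closed. For the reverse inclusion, pick $x_0\in{\rm int}(P)$ and $r>0$ with $B(x_0,r)\subset P$; then $\langle u_i^*,x_0\rangle<s_i$ for every $i\in\overline{1n}$ — indeed if $\langle u_i^*,x_0\rangle=s_i$ for some $i$ then, taking $z$ with $\langle u_i^*,z\rangle>0$, the point $x_0+\frac{r}{2\|z\|}z\in B(x_0,r)\subset P$ would violate the $i$-th inequality. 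By Lemma \ref{lem2.1}, $F_j(P)\ne\emptyset$; take any $\bar z\in F_j(P)$ and consider the segment $z_t:=(1-t)\bar z+t x_0$ for $t\in(0,1)$. For each $i\ne j$ we have $\langle u_i^*,z_t\rangle=(1-t)\langle u_i^*,\bar z\rangle+t\langle u_i^*,x_0\rangle<(1-t)s_i+ts_i=s_i$ using $\langle u_i^*,x_0\rangle<s_i$. It remains to arrange $\langle u_j^*,z_t\rangle=s_j$: this need not hold on the whole segment, so instead I perturb. Since $u_j^*\ne0$, choose $w$ with $\langle u_j^*,w\rangle=0$ but — more carefully — the standard device is to first move strictly inside along $x_0$ and then project back onto the hyperplane $\{\langle u_j^*,\cdot\rangle=s_j\}$ in a direction annihilating all the other $u_i^*$ simultaneously. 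Concretely: set $y_t:=z_t+\lambda_t d$ where $d\in\mathcal{N}(u_i^*)$ for all $i\ne j$ is impossible to guarantee in general, so instead I use that $z_t\in{\rm int}(P)$-adjacent points satisfy $\langle u_j^*,z_t\rangle\le s_j$ with $\langle u_j^*,z_t\rangle\to s_j$ as $t\to0$, and pick a vector $e$ with $\langle u_j^*,e\rangle>0$; then $y_t:=z_t+\frac{s_j-\langle u_j^*,z_t\rangle}{\langle u_j^*,e\rangle}e$ satisfies $\langle u_j^*,y_t\rangle=s_j$, while $\langle u_i^*,y_t\rangle=\langle u_i^*,z_t\rangle+\frac{s_j-\langle u_j^*,z_t\rangle}{\langle u_j^*,e\rangle}\langle u_i^*,e\rangle<s_i$ for all $i\ne j$ once $t$ is small, because $\langle u_i^*,z_t\rangle<s_i$ with a gap bounded below on compact $t$-intervals while the correction term tends to $0$. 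Thus $y_t\in F_j^\circ(P)$ and $y_t\to\bar z$ as $t\to0^+$, giving $\bar z\in\mathrm{cl}(F_j^\circ(P))$.

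For $(iv)\Rightarrow(i)$, suppose $F_{j_0}^\circ(P)\ne\emptyset$ and take $z_0\in F_{j_0}^\circ(P)$, so $\langle u_i^*,z_0\rangle<s_i$ for all $i\ne j_0$ and $\langle u_{j_0}^*,z_0\rangle=s_{j_0}$. Since $(u_{j_0}^*,s_{j_0})$ is not a redundant generator, by \eqref{P2.3} there exists $w\in Z$ with $\langle u_i^*,w\rangle\le s_i$ for all $i\ne j_0$ but $\langle u_{j_0}^*,w\rangle>s_{j_0}$. Consider $x_0:=\frac12(z_0+w')$ where $w'$ is a slight pullback of $w$ toward $z_0$; more simply, for small $\varepsilon>0$ set $x_\varepsilon:=z_0+\varepsilon(w-z_0)$, which gives $\langle u_{j_0}^*,x_\varepsilon\rangle=s_{j_0}+\varepsilon(\langle u_{j_0}^*,w\rangle-s_{j_0})>s_{j_0}$, so that doesn't work directly; instead take $x_\varepsilon$ on the opposite side and then move it: the clean choice is to observe that $z_0-\delta(w-z_0)$ satisfies $\langle u_{j_0}^*,\cdot\rangle=s_{j_0}-\delta(\langle u_{j_0}^*,w\rangle-s_{j_0})<s_{j_0}$ and, for small $\delta$, still $\langle u_i^*,\cdot\rangle<s_i$ for $i\ne j_0$ by continuity; call this point $x_0$. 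Then $\langle u_i^*,x_0\rangle<s_i$ for all $i\in\overline{1n}$, and by continuity of each $u_i^*$ there is $r>0$ with $B(x_0,r)\subset\{z:\langle u_i^*,z\rangle<s_i,\ i\in\overline{1n}\}\subset P$, so ${\rm int}(P)\ne\emptyset$. The main obstacle in the whole argument is the direction-perturbation in $(i)\Rightarrow(ii)$: one must keep all the $n-1$ inequalities indexed by $i\ne j$ strict while exactly restoring the equality in coordinate $j$, which forces the uniformity-over-$t$ estimate on the gaps $s_i-\langle u_i^*,z_t\rangle$; everything else is routine continuity and the non-redundancy relation \eqref{P2.3}.
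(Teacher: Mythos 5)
Your cyclic scheme $(i)\Rightarrow(ii)\Rightarrow(iii)\Rightarrow(iv)\Rightarrow(i)$ is the same as the paper's, and your arguments for $(ii)\Rightarrow(iii)$, $(iii)\Rightarrow(iv)$ and $(iv)\Rightarrow(i)$ are correct. The genuine gap is in $(i)\Rightarrow(ii)$. With $z_t=(1-t)\bar z+tx_0$ and $y_t=z_t+\frac{s_j-\langle u_j^*,z_t\rangle}{\langle u_j^*,e\rangle}\,e$, you justify $\langle u_i^*,y_t\rangle<s_i$ by saying the gap $s_i-\langle u_i^*,z_t\rangle$ stays bounded below while the correction term tends to $0$. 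But when $\bar z$ lies on another face as well, i.e. $\langle u_i^*,\bar z\rangle=s_i$ for some $i\neq j$ (which happens, e.g., at any vertex), the gap equals $t\,(s_i-\langle u_i^*,x_0\rangle)$ and the correction equals $t\,\frac{(s_j-\langle u_j^*,x_0\rangle)\langle u_i^*,e\rangle}{\langle u_j^*,e\rangle}$: both are of order $t$, so letting $t\to0$ decides nothing, and for a badly chosen $e$ the required strict inequality genuinely fails. A symptom of the problem is that your argument for this implication never uses that the generator group is prime, yet the implication is false without that hypothesis: for $P=\{x:\langle u^*,x\rangle\le0,\ \langle 2u^*,x\rangle\le0\}$ one has ${\rm int}(P)\neq\emptyset$ and $F_1(P)\neq\emptyset$ but $F_1^\circ(P)=\emptyset$. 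Some appeal to (\ref{P2.3}) is therefore unavoidable in this step.

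The repair is exactly that appeal. By (\ref{P2.3}) there is $v$ with $\langle u_j^*,v\rangle>s_j$ and $\langle u_i^*,v\rangle\le s_i$ for all $i\neq j$; choosing $e=v-x_0$ gives $\langle u_j^*,e\rangle>s_j-\langle u_j^*,x_0\rangle>0$ and $\langle u_i^*,e\rangle\le s_i-\langle u_i^*,x_0\rangle$, so the correction coefficient is strictly smaller than the gap coefficient for every $i\neq j$ and your $y_t$ does land in $F_j^\circ(P)$. The paper avoids this two-rate bookkeeping entirely: it first produces a single point $w_j=\lambda_0x_0+(1-\lambda_0)v\in F_j^\circ(P)$ and then approximates an arbitrary $\bar z\in F_j(P)$ by the convex combinations $\frac{k}{k+1}\bar z+\frac{1}{k+1}w_j$, which remain in $F_j^\circ(P)$ because the equality $\langle u_j^*,\cdot\rangle=s_j$ is preserved under convex combination while the strict inequalities are inherited from $w_j$. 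Either fix is fine, but as written your proof of $(i)\Rightarrow(ii)$ is incomplete.
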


\begin{proof}
First suppose that (i) holds. Then,   by Corollary \ref{coro2.1}, there exists $x_0\in Z$ such that $\langle u_i^*,x_0\rangle<s_i$ for all $i\in\overline{1n}$. For each $j\in\overline{1n}$, by (\ref{P2.3}), there exists $v\in Z$ such that $\langle u_j^*,v\rangle>s_j$ and $\langle u_i^*,v\rangle \leq s_i$ for all $i\in\overline{1n}\setminus\{j\}$.
It follows that there exists $\lambda_0\in(0,\;1)$ such that
$$\langle u_j^*,\lambda_0x_0+(1-\lambda_0)v\rangle=s_j\;\;{\rm and}\;\;\langle u_i^*,\lambda_0x_0+(1-\lambda_0)v\rangle\rangle <s_i\quad\forall i\in\overline{1n}\setminus\{j\}.$$
Therefore,   $\frac{kx}{1+k}+\frac{\lambda_0x_0+(1-\lambda_0)v}{k+1}\in F_j^\circ(P)$ for all $(x,k)\in F_j(P)\times\mathbb{N}$. Letting $k\rightarrow\infty$, it follows that $x\in{\rm cl}(F_j^\circ(P))$ for all $x\in F_j(P)$, that is,  $F_j(P)\subset {\rm cl}(F_j^\circ(P))$. Since the converse inclusion holds trivially, this shows implication (i)$\Rightarrow$(ii). Since (ii)$\Rightarrow$(iii) is immediate from Lemma \ref{lem2.1} and (iii)$\Rightarrow$(iv) is trivial,  it suffices to show (iv)$\Rightarrow$(i). To prove this, let $\bar x\in F_{j_0}^\circ(P)$, that is, $\langle u_{j_0}^*,\bar x\rangle=s_{j_0}$ and $\langle u_i^*,\bar x\rangle<s_i$ for all $i\in\overline{1n}\setminus\{j_0\}$. Taking $h\in Z$ with $\langle u_{j_0}^*,h\rangle<0$ (thanks to  $u_{j_0}^*\not=0$), it follows that there exists  $t>0$ sufficiently small
such that $\langle u_k^*,\bar x+th\rangle<s_k$ for all $k\in\overline{1n}$. This shows  that $\bar x+th\in{\rm int}(P)$, and hence (iv)$\Rightarrow$(i) holds. The proof is complete.
\end{proof}

\begin{lem}\label{lem2.3}
Let $P_1$ and $P_2$ be two polyhedra in a normed space $Z$ such that ${\rm int}(P_1)\cap P_2=\emptyset$, and let
$\{(u_{ij}^*,s_{ij}):\;j=1,\cdots,n_i\}\subset Z^*\times\mathbb{R}$ be a prime generator group of $P_i$ ($i=1,2$).  Then for any $(j_1,j_2)\in\overline{1n_1}\times\overline{1n_2}$ and  $x_0\in F_{j_1}^\circ(P_1)\cap F_{j_2}^\circ(P_2)$ there exists $r>0$ such that
$\mathcal{N}(u_{1j_1}^*)=\mathcal{N}(u_{2j_2}^*)$  and
\begin{equation}\label{new2.14}
\;\;F_{j_1}^\circ(P_1)\cap B_Z(x_0,r)=F_{j_2}^\circ(P_2)\cap B_Z(x_0,r)=(x_0+\mathcal{N}(u_{1j_1}^*))\cap B_Z(x_0,r),
\end{equation}
where  $B_Z(x_0,r):=\{x\in Z:\;\|x-x_0\|<r\}$ and $F_{j_1}^\circ(P_1)$ is as in (\ref{F2}).
\end{lem}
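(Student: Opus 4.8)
The plan is to exploit the local structure of $P_1$ and $P_2$ near the common point $x_0 \in F_{j_1}^\circ(P_1) \cap F_{j_2}^\circ(P_2)$. Since $x_0 \in F_{j_1}^\circ(P_1)$, we have $\langle u_{1j_1}^*, x_0 \rangle = s_{1j_1}$ while $\langle u_{1j}^*, x_0 \rangle < s_{1j}$ for every $j \in \overline{1n_1}\setminus\{j_1\}$; symmetrically for $P_2$. By picking $r_1 > 0$ small enough that the strict inequalities $\langle u_{1j}^*, x\rangle < s_{1j}$ ($j \neq j_1$) and $\langle u_{2j}^*, x\rangle < s_{2j}$ ($j \neq j_2$) persist on $B_Z(x_0, r_1)$, we get that on this ball the set $P_1$ coincides with $\{x : \langle u_{1j_1}^*, x\rangle \le s_{1j_1}\}$ and $F_{j_1}^\circ(P_1)$ coincides with $(x_0 + \mathcal{N}(u_{1j_1}^*)) \cap B_Z(x_0, r_1)$ — indeed $\langle u_{1j_1}^*, x\rangle = s_{1j_1}$ forces $x - x_0 \in \mathcal{N}(u_{1j_1}^*)$ — and likewise $P_2 \cap B_Z(x_0, r_1)$ is the half-space $\{x : \langle u_{2j_2}^*, x\rangle \le s_{2j_2}\}$ locally, with $F_{j_2}^\circ(P_2)$ locally the affine hyperplane $x_0 + \mathcal{N}(u_{2j_2}^*)$.

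The second step is to extract the conclusion $\mathcal{N}(u_{1j_1}^*) = \mathcal{N}(u_{2j_2}^*)$ from the hypothesis ${\rm int}(P_1) \cap P_2 = \emptyset$. Locally near $x_0$, ${\rm int}(P_1)$ is the open half-space $\{x : \langle u_{1j_1}^*, x\rangle < s_{1j_1}\}$ (using Corollary 2.1 and the local reduction above; note ${\rm int}(P_1) \neq \emptyset$ follows from $F_{j_1}^\circ(P_1) \neq \emptyset$ via Lemma 2.2), and $P_2$ locally contains the closed half-space $\{x : \langle u_{2j_2}^*, x\rangle \le s_{2j_2}\}$ near $x_0$. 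If the two hyperplanes $\mathcal{N}(u_{1j_1}^*)$ and $\mathcal{N}(u_{2j_2}^*)$ were distinct, then — both passing through the common point $x_0$ after translation — the open half-space bounded by the first would necessarily meet the closed half-space bounded by the second in a neighbourhood of $x_0$ (pick $h$ with $\langle u_{1j_1}^*, h\rangle < 0$ but $\langle u_{2j_2}^*, h\rangle < 0$ as well, or perturb $h$ slightly; the key is that two distinct hyperplanes through a point cannot have their half-spaces separate a neighbourhood). This would contradict ${\rm int}(P_1) \cap P_2 = \emptyset$. Hence $u_{1j_1}^*$ and $u_{2j_2}^*$ are parallel, and since their half-spaces are on "opposite sides" in the sense forced by the emptiness condition — actually the same side, with $P_2$ sitting in the complement of ${\rm int}(P_1)$ — one gets $u_{2j_2}^* = \alpha u_{1j_1}^*$ for some $\alpha > 0$ (the sign is pinned down by $x_0$ lying on both boundary hyperplanes and both half-spaces being proper), whence $\mathcal{N}(u_{1j_1}^*) = \mathcal{N}(u_{2j_2}^*)$.

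Once the two null spaces coincide, formula (\ref{new2.14}) follows by combining the two local identifications from the first step: for $r := r_1$ (shrunk if necessary) both $F_{j_1}^\circ(P_1) \cap B_Z(x_0,r)$ and $F_{j_2}^\circ(P_2) \cap B_Z(x_0,r)$ equal $(x_0 + \mathcal{N}(u_{1j_1}^*)) \cap B_Z(x_0,r)$. I expect the main obstacle to be making the parallelism argument in the second step fully rigorous in the \emph{infinite-dimensional} normed-space setting: one cannot simply invoke finite-dimensional hyperplane geometry, so the cleanest route is probably to argue via the functionals directly — show that $\{x : \langle u_{1j_1}^*,x\rangle < s_{1j_1}\} \cap \{x : \langle u_{2j_2}^*, x\rangle \le s_{2j_2}\} = \emptyset$ on a neighbourhood of $x_0$ forces, after translating $x_0$ to the origin, that $\langle u_{1j_1}^*, x\rangle < 0 \Rightarrow \langle u_{2j_2}^*, x\rangle > 0$ on a ball, then extend to all of $Z$ by scaling, obtaining $u_{2j_2}^* = \alpha u_{1j_1}^*$ with $\alpha > 0$ — an elementary but slightly delicate separation argument, analogous to the one used in the proof of Lemma \ref{lemN2.1}.
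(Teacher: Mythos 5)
Your proof is correct, and its overall skeleton matches the paper's: both start from the local identification $F_{j_i}^\circ(P_i)\cap B_Z(x_0,r)=(x_0+\mathcal{N}(u_{ij_i}^*))\cap B_Z(x_0,r)\subset P_i$, which holds because at $x_0\in F_{j_i}^\circ(P_i)$ all constraints other than the $j_i$-th are strict and hence remain strict on a small ball. Where you diverge is in how you extract $\mathcal{N}(u_{1j_1}^*)=\mathcal{N}(u_{2j_2}^*)$ from ${\rm int}(P_1)\cap P_2=\emptyset$. The paper invokes the Hahn--Banach separation theorem to produce $v^*\neq0$ with $\langle v^*,x_0\rangle=\inf_{P_1}v^*=\sup_{P_2}v^*$, and then uses the local identification to conclude that $v^*$ vanishes on both $\mathcal{N}(u_{1j_1}^*)$ and $\mathcal{N}(u_{2j_2}^*)$, so all three hyperplanes coincide. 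You instead argue directly: if the two functionals were linearly independent, the map $h\mapsto(\langle u_{1j_1}^*,h\rangle,\langle u_{2j_2}^*,h\rangle)$ would be onto $\mathbb{R}^2$, yielding $h$ with both values negative and hence $x_0+th\in{\rm int}(P_1)\cap P_2$ for small $t>0$, a contradiction; this works verbatim in infinite dimensions (your worry about finite-dimensional hyperplane geometry is unfounded, since the argument only uses the two functionals), and it is marginally more elementary in that it avoids the separation theorem. One small slip: with your orientation convention the disjointness forces $u_{2j_2}^*=\alpha u_{1j_1}^*$ with $\alpha<0$, not $\alpha>0$ (the two outward normals point to opposite sides of the common hyperplane); this is immaterial, since the conclusion only concerns the null spaces, which do not see the sign of $\alpha$.
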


\begin{proof}
Let $(j_1,j_2)\in\overline{1n_1}\times\overline{1n_2}$ and  $x_0\in F_{j_1}^\circ(P_1)\cap F_{j_2}^\circ(P_2)$. Then $x_0\in P_1\cap P_2$.  Since ${\rm int}(P_1)\cap P_2=\emptyset$, the separation theorem implies that there exists $v^*\in Z^*\setminus\{0\}$ such that
$\langle v^*,x_0\rangle=\inf\limits_{x\in P_1}\langle v^*,x\rangle=\sup\limits_{x\in P_2}\langle v^*,x\rangle$.
Noting that
\begin{equation}\label{F3}
F_{j_1}^\circ(P_1)\cap B_Z(x_0,r)=(x_0+\mathcal{N}(u_{1j_1}^*))\cap B_Z(x_0,r)\subset P_1
\end{equation}
and
\begin{equation}\label{F4}
F_{j_2}^\circ(P_2)\cap B_Z(x_0,r)=(x_0+\mathcal{N}(u_{2j_2}^*))\cap B_Z(x_0,r)\subset P_2
\end{equation}
for some $r>0$  (thanks to the definitions of $F_{j_1}^\circ(P_1)$ and $F_{j_2}^\circ(P_2)$), it follows that
$$\langle v^*,x_0\rangle=\inf\limits_{x\in (x_0+\mathcal{N}(u_{1j_1}^*))\cap B_Z(x_0,r)}\langle v^*,x\rangle=\sup\limits_{x\in (x_0+\mathcal{N}(u_{2j_2}^*))\cap B_Z(x_0,r)}\langle v^*,x\rangle.$$
Hence $\inf\limits_{x\in \mathcal{N}(u_{1j_1}^*)\cap B_Z(0,r)}\langle v^*,x\rangle=\sup\limits_{x\in \mathcal{N}(u_{2j_2}^*)\cap B_Z(0,r)}\langle  v^*,x\rangle=0$, and so $$\mathcal{N}(v^*)=\mathcal{N}(u_{1j_1}^*)=\mathcal{N}(u_{2j_2}^*)$$
because $v^*$ is linear and both $\mathcal{N}(u_{1j_1}^*)$ and $\mathcal{N}(u_{2j_2}^*)$ are maximal linear subspaces of $Z$. This, together with (\ref{F3}) and (\ref{F4}), implies that (\ref{new2.14}) holds.
\end{proof}

\section{Piecewise linear vector-valued functions}
In this section, we will distinguish $\mathcal{PL}_1(X,Y)$ and $\mathcal{PL}(X,Y)$ and consider the structure of a  piecewise linear function.

\begin{pro}\label{pro3.1}
Let $X$ and $Y$ be normed spaces. Then the following statements hold.\\
(i) $\mathcal{L}(X,Y)$ is always contained in $\mathcal{P}\mathcal{L}(X,Y)$.\\
(ii) $\mathcal{P}\mathcal{L}_1(X,Y)\not=\emptyset$ if and only if  ${\rm dim}(Y)<\infty$. Consequently, if $Y$ is infinite dimensional, then every linear function from $X$ to $Y$ is not in $\mathcal{P}\mathcal{L}_1(X,Y)$.\\
(iii) $\mathcal{P}\mathcal{L}_1(X,Y)=\mathcal{P}\mathcal{L}(X,Y)$ when ${\rm dim}(Y)<\infty$.
\end{pro}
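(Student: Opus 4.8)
The plan is to handle the three items in the natural order, proving (ii) and (iii) together since they share the same constructions, and using (iii) as the key ingredient for the "if" direction of (ii). For item (i), given $T\in\mathcal{L}(X,Y)$, I would simply take $m=1$, $P_1=X$, $T_1=T$ and $b_1=0$ in the definition (\ref{pl2}); since $X$ is trivially a polyhedron (the empty intersection of halfspaces) this shows $T\in\mathcal{PL}(X,Y)$ directly. This is the routine part and needs only a sentence.

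For the "only if" direction of (ii), suppose $f\in\mathcal{PL}_1(X,Y)$, so $\mathrm{gph}(f)=\bigcup_{i=1}^m\Lambda_i$ with each $\Lambda_i$ a polyhedron in $X\times Y$. I would fix any $i$ with $\mathrm{int}_{X}(\mathrm{proj}_X(\Lambda_i))$ large enough (or just pick one $\Lambda_i$ whose projection onto $X$ has nonempty interior — such an $i$ exists by Lemma 2.1 applied to a ball inside $X=\mathrm{proj}_X(\mathrm{gph}(f))=\bigcup_i\mathrm{proj}_X(\Lambda_i)$, each projection being a polyhedron hence closed). On that $\Lambda_i$, since $f$ is a (single-valued) graph, $\Lambda_i$ is itself the graph of an affine selection over a full-dimensional polyhedron in $X$; the recession cone / linearity space structure of the polyhedron $\Lambda_i$ then forces $\mathrm{proj}_Y(\mathrm{rec}(\Lambda_i))$ to be finite-dimensional, and since $f$ restricted to an open set is affine and its range's affine hull must lie in a finite-dimensional translate, one deduces $\dim(Y)<\infty$ — more precisely, $Y$ equals the affine hull of $f$ near an interior point minus a basepoint, which is finite-dimensional because a polyhedron in a normed space is contained in a finite-dimensional affine subspace plus its (finite-codimensional) linearity space, and the single-valuedness kills the linearity-space directions in the $Y$-component. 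The cleanest route is: a polyhedron $\Lambda$ in $X\times Y$ satisfies $\Lambda\subset \mathrm{lin}(\Lambda)+F$ for some finite-dimensional $F$ (by Proposition 2.1 with $Z_1=\mathrm{lin}(\Lambda)$), and $\mathrm{lin}(\Lambda)\cap(\{0\}\times Y)=\{0\}$ because $\Lambda$ is a graph, so $\mathrm{proj}_Y(\Lambda_i)$ is finite-dimensional; combined with $\mathrm{proj}_Y(\Lambda_i)$ containing an affine image of an open subset of $X$ it follows (after also noting $Y=\bigcup_i \mathrm{aff}(\mathrm{proj}_Y\Lambda_i)$-span arguments) that $\dim Y<\infty$.

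For (iii), assume $\dim(Y)<\infty$; I must show $\mathcal{PL}(X,Y)\subset\mathcal{PL}_1(X,Y)$ (the reverse inclusion, $\mathcal{PL}_1\subset\mathcal{PL}$, is the easier direction and follows by projecting each $\Lambda_i$ to $X$ and observing the resulting polyhedra cover $X$ with $f$ affine on each). Given $f\in\mathcal{PL}(X,Y)$ with data $P_i$, $T_i$, $b_i$ as in (\ref{pl2}), the candidate polyhedra are $\Lambda_i:=\mathrm{gph}(T_i+b_i)\cap(P_i\times Y)=\{(x,T_ix+b_i):x\in P_i\}$. The content is that each $\Lambda_i$ is a \emph{polyhedron} in $X\times Y$: it is the intersection of $P_i\times Y$ (a polyhedron) with the graph of the affine map $T_i+b_i$, and the graph of a continuous linear operator into a finite-dimensional space is a finite intersection of hyperplanes — pick a basis $y_1^*,\dots,y_q^*$ of $Y^*$ and write $\mathrm{gph}(T_i)=\bigcap_{k=1}^q\{(x,y):\langle y_k^*,y\rangle=\langle y_k^*\circ T_i,x\rangle\}$, which is genuinely polyhedral because $\dim Y<\infty$ makes this a finite system. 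Then $\mathrm{gph}(f)=\bigcup_i\Lambda_i$ follows from $X=\bigcup_i P_i$, giving $f\in\mathcal{PL}_1(X,Y)$.

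The main obstacle is the "only if" half of (ii): rigorously extracting $\dim(Y)<\infty$ from the mere existence of a finite polyhedral cover of $\mathrm{gph}(f)$. The delicate point is that in an infinite-dimensional $Y$ a polyhedron can still be "thin" (contained in a finite-dimensional affine set plus a finite-codimensional linearity subspace), and one must exploit the graph condition to rule out the linearity-subspace directions having any $Y$-component, and then use that the $X$-projections of the $\Lambda_i$ cover $X$ (with one of them full-dimensional, via Lemma 2.1) to conclude the $Y$-projection of that piece — hence $Y$ itself — is finite-dimensional. I expect this to rely essentially on Proposition 2.1 and the recession/linearity-space identities (\ref{L2.2}), and to be the step requiring the most care.
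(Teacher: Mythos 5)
Your overall architecture is the paper's: (i) is the one--piece representation, the inclusion $\mathcal{PL}(X,Y)\subset\mathcal{PL}_1(X,Y)$ for finite dimensional $Y$ is proved exactly as in the paper (graph of $T_i$ as a finite intersection of hyperplanes via a basis of $Y^*$), and the necessity in (ii) is meant to come from the decomposition of a polyhedron into its finite--codimensional linearity space plus a finite--dimensional piece, with single--valuedness killing the $Y$--directions of the linearity space. However, your concluding step for the necessity of (ii) is wrong as written. The implication ``$\mathrm{lin}(\Lambda)\cap(\{0\}\times Y)=\{0\}$, so $\mathrm{proj}_Y(\Lambda_i)$ is finite--dimensional'' is a non sequitur (a subspace can meet $\{0\}\times Y$ trivially and still have infinite--dimensional $Y$--projection, e.g.\ a graph of an infinite--rank linear map), and the claims ``$Y$ equals the affine hull of $f$ near an interior point minus a basepoint'' and ``$Y=\bigcup_i\mathrm{aff}(\mathrm{proj}_Y\Lambda_i)$'' are simply false: take $f\equiv 0$, whose graph pieces project to $\{0\}$ in $Y$ while $Y$ is arbitrary. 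The correct deduction, which uses only the two facts you already have on the table, is direct: for a nonempty $\Lambda_i$, $\mathrm{lin}(\Lambda_i)=\bigcap_j\mathcal{N}(u_j^*)$ has finite codimension in $X\times Y$ and satisfies $\Lambda_i+\mathrm{lin}(\Lambda_i)\subset\Lambda_i\subset\mathrm{gph}(f)$, whence $\mathrm{lin}(\Lambda_i)\cap(\{0\}\times Y)=\{0\}$; therefore $\{0\}\times Y$ injects into the finite--dimensional quotient $(X\times Y)/\mathrm{lin}(\Lambda_i)$ and $\dim(Y)<\infty$. This is what the paper does (in the form $Y_i=\{0\}$, $\tilde Y_i=Y$ finite dimensional); no projections onto $Y$ and no covering of $X$ are needed.

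The second gap is your dismissal of $\mathcal{PL}_1(X,Y)\subset\mathcal{PL}(X,Y)$ as ``the easier direction.'' Knowing that the projections $\Lambda_i|_X$ are polyhedra covering $X$ and that $f$ is affine on each (which does follow from convexity of the graph piece) does not yet put $f$ in $\mathcal{PL}(X,Y)$: the definition requires $T_i\in\mathcal{L}(X,Y)$, i.e.\ a continuous linear operator defined on all of $X$ agreeing with $f-b_i$ on $\Lambda_i|_X$. Extending the affine selection from the (generally proper, possibly low--dimensional) polyhedron $\Lambda_i|_X$ to such a $T_i$ is the bulk of the paper's proof: one passes to a relative interior point to linearize on the generated subspace, decomposes the resulting linear map into finitely many scalar functionals using a basis of its finite--dimensional range, checks continuity of these functionals via closedness of their null spaces, and extends each by Hahn--Banach. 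None of this is present in your sketch, and without it the inclusion $\mathcal{PL}_1\subset\mathcal{PL}$ (hence the equality in (iii) and, as the paper organizes it, part of (ii)) is not established.
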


\begin{proof}
Since (i) is trivial and the sufficiency part of (ii) is a straightforward consequence of (i) and (iii), it suffices to show (iii) and the necessity part of (ii). First suppose that $\mathcal{P}\mathcal{L}_1(X,Y)\not=\emptyset$, and let
 $g$ be an  element in $\mathcal{P}\mathcal{L}_1(X,Y)$. Then there exist finitely many polyhedra $\Lambda_1,\cdots,\Lambda_k$ in the product $X\times Y$ such that
\begin{equation}\label{4.2}
{\rm gph}(g)=\bigcup\limits_{i=1}^k\Lambda_i\;\;{\rm and}\;\;X=\bigcup\limits_{i=1}^k\Lambda_i|_{X},
\end{equation}
where $\Lambda_i|_X:=\{x\in X:\;(x,y)\in\Lambda_i\;{\rm for\;some}\;y\in Y\}$ is the projection of $\Lambda_i$ to $X$.
Given an  $i\in\overline{1k}$, by Proposition \ref{pro2.1}, there exist two closed subspaces $X_i,\tilde{X}_i$ of $X$ and two closed subspaces $Y_i,\tilde{Y}_i$ of $Y$ such that
\begin{equation}\label{n3.2}
X\times Y=(X_i\times Y_i)\oplus(\tilde{X}_i\times\tilde{Y}_i), \;\;{\rm codim}(X_i\times Y_i)={\rm dim}(\tilde{X}_i\times\tilde{Y}_i)\leq\infty,
\end{equation}
\begin{equation}\label{n3.3}
\Lambda_i=X_i\times Y_i+\tilde\Lambda_i,
\end{equation}
where $\tilde\Lambda_i$ is a polyhedron in $\tilde{X}_i\times \tilde{Y}_i$. Since $g$ is a single-valued function, it follows from (\ref{4.2}) that $Y_i=\{0\}$ and $\tilde{Y}_i=Y$. Hence $Y$ is finite-dimensional, and the necessity part of (ii) is proved.  Next we prove $g\in\mathcal{P}\mathcal{L}(X,Y)$. To prove this,  we only need to  show that  there exist $T_i\in\mathcal{L}(X,Y)$ and $b_i\in Y$ such that
\begin{equation}\label{n3.1}
g(x)=T_i(x)+b_i\quad\forall x\in \Lambda_i|_{X}.
\end{equation}
Since every convex set in a finite-dimensional space has a nonempty relative interior, ${\rm rint}(\tilde\Lambda_i)\not=\emptyset$. Take a point $(\tilde{a}_i,\tilde{b}_i)$ in ${\rm rint}(\tilde\Lambda_i)$. Thus, $\tilde a_i\in{\rm rint}(\tilde\Lambda_i|_{\tilde X_i})$, and $E_i:=\mathbb{R}_+(\tilde\Lambda_i|_{\tilde X_i}-\tilde{a}_i)$ and $Z_i:=\mathbb{R}_+(\tilde\Lambda_i-(\tilde{a}_i,\tilde{b}_i))$ are  linear subspaces of $\tilde{X}_i$ and $\tilde{X}_i\times\tilde{Y}_i$, respectively.  Noting that $\tilde\Lambda_i\subset {\rm gph}(g)$, define $\hat T_i:\tilde\Lambda_i|_{\tilde X_i}-\tilde{a}_i\rightarrow \tilde Y_i$ such that
\begin{equation}\label{4.6}
\hat T_i(u_i):=g(u_i+\tilde{a}_i)-g(\tilde{a}_i)=g(u_i+\tilde{a}_i)-\tilde{b}_i\quad\forall u_i\in \tilde\Lambda_i|_{\tilde X_i}-\tilde{a}_i.
\end{equation}
Then  ${\rm gph}(\hat T_i)=\tilde\Lambda_i-(\tilde{a}_i,\tilde{b}_i)$.  Let
$\tilde T_i:E_i\rightarrow\tilde Y_i$ be such that
$$\tilde T_i(tu_i):=t\hat T_i(u_i)\quad\forall (t,u_i)\in \mathbb{R}_+\times(\tilde\Lambda_i|_{\tilde X_i}-\tilde{a}_i).$$
It is easy to verify that $\tilde{T}_i$ is well-defined and its graph  is just the linear subspace $Z_i=\mathbb{R}_+(\tilde\Lambda_i-(\tilde{a}_i,\tilde{b}_i))$, and so $\tilde T_i$ is linear. Hence there exist $e_j\in Y$ and $e_{ij}^*\in E_i^*$ ($j=1,\cdots,p$) such that  $e_1,\cdots,e_p$ are linearly independent and
$$\tilde T_i(x)=\sum\limits_{j=1}^p\langle e_{ij}^*,x\rangle e_j\quad\forall x\in E_i.$$
For each $j\in\overline{1p}$, let $\tilde{e}_{ij}^*:X_i+E_i\rightarrow\mathbb{R}$ be such that
$$\langle\tilde{e}_{ij}^*,u+v\rangle=\langle e_{ij}^*,v\rangle\quad\forall (u,v)\in X_i\times E_i.$$
Then, by (\ref{n3.2}) and $E_i\subset\tilde{X}_i$, $\tilde{e}_{ij}^*$ is a linear functional on $X_i+E_i$, and its null space
$$\mathcal{N}(\tilde{e}_{ij}^*):=\{x\in X_i+E_i:\;\langle\tilde{e}_{ij}^*,x\rangle=0\}=X_i+\{v\in E_i:\;\langle e_{ij}^*,v\rangle=0\}.$$
Since $X_i$ is a closed subspace of $X$ and ${\rm dim}(E_i)<\infty$, it follows that $\mathcal{N}(\tilde{e}_{ij}^*)$ is a closed subspace of $X$. Hence $\tilde{e}_{ij}^*$ is a continuous linear functional on $X_i+E_i$ (thanks to \cite[Theorem 1.18]{Ru}). By the Hahn-Banach theorem, there exists $x_{ij}^*\in X^*$ such that $x_{ij}^*|_{X_i+E_i}=\tilde{e}_{ij}^*$. Let $T_i:X\rightarrow Y$ be such that
$$T_i(x)=\sum\limits_{j=1}^p\langle x_{ij}^*,x\rangle e_j\quad\forall x\in X.$$
Then $T_i\in\mathcal{L}(X,Y)$,
\begin{equation}\label{4.7}
\mathcal{N}(T_i)\supset\bigcap\limits_{j=1}^p\mathcal{N}(x_{ij}^*)\supset X_i\;\;{\rm and}\;\;T_i|_{\tilde{\Lambda}_i|_{\tilde{X}_i}-\tilde{a}_i}=\tilde{T}_i|_{\tilde{\Lambda}_i|_{\tilde{X}_i}-\tilde{a}_i}=\hat{T}_i.
\end{equation}
Let $x$ be an arbitrary element in $\Lambda_i|_X$ and take $y\in Y$ such that $(x,y)\in\Lambda_i$. Then, by (\ref{n3.2}) and (\ref{n3.3}), there exist $x_i\in X_i$ and  $\tilde{x}_i\in\tilde{\Lambda}_i|_{\tilde{X}_i}$ such that $(\tilde{x}_i,y)\in\tilde{\Lambda}_i$ and $(x,y)=(x_i+\tilde{x}_i,y)$ (because  $Y_i=\{0\}$). Hence, by (\ref{4.6}) and (\ref{4.7}), one has
$$g(x)=g(\tilde{x}_i)=y=\hat{T}_i(\tilde{x}_i-\tilde{a}_i)+\tilde{b}_i=T_i(\tilde{x}_i-\tilde{a}_i)+\tilde{b}_i=T_i(x)-T_i(\tilde{a}_i)+\tilde{b}_i.$$
This shows that (\ref{n3.1}) holds with $b_i=-T_i(\tilde{a}_i)+\tilde{b}_i$ and so $g\in \mathcal{P}\mathcal{L}(X,Y)$. Therefore,
$\mathcal{P}\mathcal{L}_1(X,Y)\subset \mathcal{P}\mathcal{L}(X,Y)$.

Now suppose that ${\rm dim}(Y)<\infty$. To prove the converse inclusion
$\mathcal{P}\mathcal{L}_1(X,Y)\supset \mathcal{P}\mathcal{L}(X,Y)$,
let $g\in\mathcal{P}\mathcal{L}(X,Y)$. Then there exist  $P_i\in\mathcal{P}(X)$, $T_i\in\mathcal{L}(X,Y)$ and $b_i\in Y$ ($i=1,\cdots,n$) such that
\begin{equation}\label{n3.5}
X=\bigcup\limits_{i=1}^nP_i\;\;{\rm and}\;\;g(x)=T_i(x)+b_i\quad\forall x\in P_i\;{\rm and}\;\forall i\in\overline{1n}.
\end{equation}
By ${\rm dim}(Y)<\infty$, there exist $y_1^*,\cdots,y_q^*\in Y^*$ such that $Y^*={\rm span}\{y_1^*,\cdots,y_q^*\}$.  For any $x\in X$, since
$$T_i(x)=y\Leftrightarrow[\langle y^*,T_i(x)\rangle=\langle y^*,y\rangle\;\;\forall y^*\in Y^*]\Leftrightarrow[\langle y_j^*,T_i(x)\rangle=\langle y_j^*,y\rangle,\;j=1,\cdots,q],$$
$$T_i(x)=y\Longleftrightarrow[\langle T_i^*(y_j^*),x\rangle=\langle y_j^*,y\rangle,\;j=1,\cdots,q].$$
Hence ${\rm gph}(T_i)=\{(x,y)\in X\times Y:\;\langle T_i^*(y_j^*),x\rangle-\langle y_j^*,y\rangle=0,\;j=1,\cdots,q\}$,
and so ${\rm gph}(T_i)$ is a polyhedron of $X\times Y$. Noting (by (\ref{n3.5})) that
$${\rm gph}(g)=\bigcup\limits_{i=1}^n({\rm gph}(T_i)+(0,b_i))\cap(P_i\times Y),$$
it follows that ${\rm gph}(g)$  is the union of finitely many polyhedra in $X\times Y$. Therefore, $g\in\mathcal{P}\mathcal{L}_1(X,Y)$. The proof of (iii) is complete.
\end{proof}

Given $f\in\mathcal{PL}(X,Y)$,  there exist $(P_1,T_1,b_1),\cdots, (P_m,T_m,b_m)$ in the product $\mathcal{P}(X)\times\mathcal{L}(X,Y)\times Y$ such that (\ref{pl2}) holds. For  $i\in\overline{1m}$,  since each polyhedron is closed, the first equality of (\ref{pl2}) implies that ${\rm int}(P_i)\supset X\setminus\bigcup\limits_{j\in\overline{1m}\setminus\{i\}}P_j$ and so $X=\bigcup\limits_{j\in\overline{1m}\setminus\{i\}}P_j$ whenever ${\rm int}(P_i)=\emptyset$. Hence, without loss of generality, we can assume that each $P_i$ in (\ref{pl2}) has a nonempty interior. Moreover,
we assume without loss of generality that there exists $k\in\overline{1m}$ satisfying the following property:
\begin{equation}\label{3.9}
(T_i,b_i)\not=(T_{i'},b_{i'})\quad\forall i,i'\in \overline{1k}\;{\rm with}\; i\not=i'
\end{equation}
and for each $j\in \overline{1m}$ there exists $i\in \overline{1k}$ such that $(T_j,b_j)=(T_i,b_i)$.
For each $i\in\overline{1k}$, let
\begin{equation}\label{3.10}
I_i:=\{j\in \overline{1m}: (T_j,b_j)=(T_i,b_i)\}\;\;{\rm and}\;\; Q_i:=\bigcup\limits_{j\in I_i}P_j.
\end{equation}
Then $X=\bigcup\limits_{i\in\overline{1k}}Q_i$, $X\not=\bigcup\limits_{i\in\overline{1k},i\not=j}Q_i$  and $f|_{Q_j}=T_j|_{Q_j}+b_j$ for all $j\in\overline{1k}$.
We claim that
\begin{equation}\label{3.12}
{\rm int}(Q_i)\cap{\rm int}(Q_{i'})=\emptyset\quad\forall i,i'\in\overline{1k}\;{\rm with}\;i\not=i'.
\end{equation}
Indeed, if this is not the case, there exist $i,i'\in\overline{1k}$  with $i\not=i'$, $x\in X$ and $r>0$ such that $B(x,r)\subset Q_i\cap Q_{i'}$, and so
$$f(x)=T_i(u)+b_i=T_{i'}(u)+b_{i'}\quad\forall u\in B(x,r).$$Since $T_i$ and $T_{i'}$ are linear, it follows that $(T_i,b_i)=(T_{i'},b_{i'})$, contradicting (\ref{3.9}). Hence (\ref{3.12}) holds. Since each $Q_i$ is closed, (\ref{3.12}) can be rewritten as
$$Q_i\cap{\rm int}(Q_{i'})=\emptyset\quad\forall i,i'\in\overline{1k}\;{\rm with}\;i\not=i'.$$
Therefore, by Proposition \ref{pro2.6}, we have the following result.

\begin{pro}\label{pro3.2}
For each $f\in\mathcal{PL}(X,Y)$ there exist $(P_i,T_i,b_i)\in \mathcal{P}(X)\times\mathcal{L}(X,Y)\times Y$ ($i=1,\cdots,m$) such that
\begin{equation}\label{3.14}
X=\bigcup\limits_{i=1}^m P_i,\; {\rm int}(P_i)\not=\emptyset,\; P_i\cap {\rm int}(P_j)=\emptyset\;\; \forall i,j\in\overline{1m}\;{\rm with}\; i\not=j,
\end{equation}
and
\begin{equation}\label{L3.17}
f|_{P_i}=T_i|_{P_i}+b_i\quad\forall i\in\overline{1m},
\end{equation}
that is, $f(x)=T_ix+b_i$ for all $i\in\overline{1m}$ and all $x\in P_i$.
\end{pro}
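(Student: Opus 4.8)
\noindent\emph{Proof plan.}\; The plan is to rely entirely on the preparatory discussion immediately preceding the statement together with Proposition~\ref{pro2.6}, invoked one group at a time. That discussion already associates to $f\in\mathcal{PL}(X,Y)$ a family $(P_j,T_j,b_j)$ ($j\in\overline{1m}$) with $X=\bigcup\limits_{j=1}^mP_j$, ${\rm int}(P_j)\not=\emptyset$ and $f|_{P_j}=T_j|_{P_j}+b_j$, a set of pairwise distinct representatives $(T_i,b_i)$ ($i\in\overline{1k}$), the index sets $I_i$ and the unions $Q_i=\bigcup\limits_{j\in I_i}P_j$ of (\ref{3.10}), satisfying $X=\bigcup\limits_{i=1}^kQ_i$, $f|_{Q_i}=T_i|_{Q_i}+b_i$, and (\ref{3.12}); since each $Q_i$ is closed, (\ref{3.12}) reads $Q_i\cap{\rm int}(Q_{i'})=\emptyset$ for all $i,i'\in\overline{1k}$ with $i\not=i'$. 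So the only thing left is to refine each $Q_i$, which is a finite \emph{union} of polyhedra rather than a polyhedron, into finitely many genuine polyhedra, preserving interior-disjointness inside each group and introducing none between groups.

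First I would fix $i\in\overline{1k}$ and apply Proposition~\ref{pro2.6} to the finite family $\{P_j:\; j\in I_i\}$ (polyhedra, each with nonempty interior), obtaining polyhedra $Q^i_1,\cdots,Q^i_{\nu_i}$ in $X$ with ${\rm int}(Q^i_s)\not=\emptyset$, $Q_i=\bigcup\limits_{s=1}^{\nu_i}Q^i_s$, and ${\rm int}(Q^i_s)\cap Q^i_{s'}=\emptyset$ whenever $s\not=s'$. Then I would collect all the $Q^i_s$ over $i\in\overline{1k}$, $s\in\overline{1\nu_i}$, relabel them $\hat P_1,\cdots,\hat P_M$, and set $(\hat T_l,\hat b_l):=(T_i,b_i)$ whenever $\hat P_l=Q^i_s$. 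This makes $\bigcup\limits_{l=1}^M\hat P_l=\bigcup\limits_{i=1}^kQ_i=X$ with each ${\rm int}(\hat P_l)\not=\emptyset$, and, because $\hat P_l=Q^i_s\subseteq Q_i$, it gives $f|_{\hat P_l}=T_i|_{\hat P_l}+b_i=\hat T_l|_{\hat P_l}+\hat b_l$ for free.

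It remains to check that the $\hat P_l$ are pairwise interior-disjoint, and this splits into two cases. If $\hat P_l=Q^i_s$ and $\hat P_{l'}=Q^i_{s'}$ come from the same group, then $s\not=s'$ and Proposition~\ref{pro2.6}, applied to the ordered pair $(s',s)$, gives ${\rm int}(Q^i_{s'})\cap Q^i_s=\emptyset$, i.e. $\hat P_l\cap{\rm int}(\hat P_{l'})=\emptyset$. If $\hat P_l=Q^i_s$ and $\hat P_{l'}=Q^{i'}_{s'}$ with $i\not=i'$, then $Q^{i'}_{s'}\subseteq Q_{i'}$ forces ${\rm int}(\hat P_{l'})={\rm int}(Q^{i'}_{s'})\subseteq{\rm int}(Q_{i'})$ by monotonicity of the interior, while $\hat P_l\subseteq Q_i$, so $\hat P_l\cap{\rm int}(\hat P_{l'})\subseteq Q_i\cap{\rm int}(Q_{i'})=\emptyset$ by (\ref{3.12}). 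Renaming $M$ as $m$, the triples $(\hat P_l,\hat T_l,\hat b_l)$ satisfy (\ref{3.14}) and (\ref{L3.17}), which proves the proposition.

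I do not expect a genuinely hard step: all the substance sits in Proposition~\ref{pro2.6} and in the grouping already carried out in the text. The point that needs care --- the ``main obstacle'' in spirit --- is that Proposition~\ref{pro2.6} must be invoked \emph{separately on each group} $\{P_j:\; j\in I_i\}$, not once on the whole family $\{P_j:\; j\in\overline{1m}\}$: an indiscriminate application could return a polyhedron straddling two pieces on which $f$ has different affine forms, and then (\ref{L3.17}) would fail. As a consequence, the cross-group interior-disjointness is not delivered by Proposition~\ref{pro2.6} and must be recovered by hand, which is exactly the second case of the previous paragraph, using (\ref{3.12}) and the monotonicity of the interior.
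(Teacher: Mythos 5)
Your proposal is correct and follows essentially the same route as the paper: the paper's argument consists precisely of the grouping discussion preceding the statement (yielding the $Q_i$ and (\ref{3.12})) followed by an invocation of Proposition~\ref{pro2.6}. You merely make explicit what the paper leaves implicit --- that Proposition~\ref{pro2.6} must be applied to each group $\{P_j:\,j\in I_i\}$ separately and that the cross-group interior-disjointness then follows from (\ref{3.12}) --- which is the correct reading and fills in the details faithfully.
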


Let $\{\Lambda_1,\cdots,\Lambda_m,\Lambda'_1,\cdots,\Lambda'_{m'}\}\subset\mathcal{P}(X)$ be such that $X=\bigcup\limits_{i=1}^m\Lambda_i=\bigcup\limits_{j=1}^{m'}\Lambda'_j$. Then $X=\bigcup\limits_{(i,j)\in\overline{1m}\times\overline{1m'}}\Lambda_i\cap\Lambda'_j$. Setting $I=\{(i,j)\in\overline{1m}\times\overline{1m'}:\;{\rm int}(\Lambda_i\cap\Lambda'_j)\not=\emptyset\}$, one has $X=\bigcup\limits_{(i,j)\in I}\Lambda_i\cap\Lambda'_j$. This yields the following proposition.

\begin{pro}\label{pro3.3}
For any $f,f'\in\mathcal{PL}(X,Y)$ there exist $(P_i,T_i,b_i),(P'_i,T'_i,b'_i)\in \mathcal{P}(X)\times\mathcal{L}(X,Y)\times Y$ ($i=1,\cdots,m$) such that (\ref{3.14}) holds and
\begin{equation}\label{L3.13}
f|_{P_i}=T_i|_{P_i}+b_i\;\;{\rm and}\;\;f'|_{P_i}=T_i'|_{P_i}+b'_i\quad\forall  i\in\overline{1m}.
\end{equation}
\end{pro}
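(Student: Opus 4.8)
The plan is to reduce the two-function case to the single-function result (Proposition 3.2) by a common-refinement argument, exactly as suggested by the remark immediately preceding the statement. First I would apply the representation (\ref{pl2}) to each of $f$ and $f'$ separately: since $f\in\mathcal{PL}(X,Y)$ there exist $(\Lambda_i,S_i,d_i)\in\mathcal{P}(X)\times\mathcal{L}(X,Y)\times Y$ ($i\in\overline{1m}$) with $X=\bigcup_{i=1}^m\Lambda_i$ and $f|_{\Lambda_i}=S_i|_{\Lambda_i}+d_i$, and similarly there exist $(\Lambda'_j,S'_j,d'_j)$ ($j\in\overline{1m'}$) with $X=\bigcup_{j=1}^{m'}\Lambda'_j$ and $f'|_{\Lambda'_j}=S'_j|_{\Lambda'_j}+d'_j$. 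Passing to the common refinement, set $\Delta_{(i,j)}:=\Lambda_i\cap\Lambda'_j$; by Proposition \ref{pro2.2} each $\Delta_{(i,j)}$ is a polyhedron, and as noted before the statement, $X=\bigcup_{(i,j)\in I}\Delta_{(i,j)}$ where $I:=\{(i,j):\;{\rm int}(\Delta_{(i,j)})\not=\emptyset\}$. On each such $\Delta_{(i,j)}$ both $f$ and $f'$ are simultaneously affine, namely $f|_{\Delta_{(i,j)}}=S_i|_{\Delta_{(i,j)}}+d_i$ and $f'|_{\Delta_{(i,j)}}=S'_j|_{\Delta_{(i,j)}}+d'_j$. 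Re-indexing the family $\{\Delta_{(i,j)}:(i,j)\in I\}$ as $R_1,\dots,R_N$ with associated pairs of operators and vectors, we obtain polyhedra $R_1,\dots,R_N$ with nonempty interiors covering $X$ on each of which $f$ and $f'$ are affine with a common polyhedral decomposition.

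Next I would clean up this decomposition to get the disjointness-of-interiors condition in (\ref{3.14}). The obstacle here is that $\{R_k\}$ may still have overlapping interiors, and worse, applying Proposition \ref{pro2.6} directly would split the $R_k$'s in a way that could mix regions carrying \emph{different} affine representations of $f$ (or of $f'$). To avoid this, I would first group the $R_k$ according to the pair of affine data they carry: declare $R_k\sim R_{k'}$ if $f$ and $f'$ have the same affine representation on both (that is, the operators and translation vectors coincide), and for each equivalence class form the union $Q$ of its members. As in the derivation of Proposition \ref{pro3.2} from Proposition \ref{pro2.6} (the argument using (\ref{3.9})--(\ref{3.12})), linearity forces that two distinct classes $Q,Q'$ satisfy ${\rm int}(Q)\cap{\rm int}(Q')=\emptyset$: if $B(x,r)\subset Q\cap Q'$ then $f$ and $f'$ would admit two affine representations agreeing on an open ball, hence (by linearity of the operators) the two representations coincide, contradicting the class being distinct. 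Each $Q$ is a finite union of polyhedra, so by Proposition \ref{pro2.6} it can be rewritten as a finite union of polyhedra $Q_1,\dots,Q_s$ with nonempty interiors and pairwise-disjoint interiors, all carrying the same affine data for $f$ and for $f'$.

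Finally I would assemble the pieces: collect all the polyhedra produced in the previous step over all equivalence classes into a single finite list $P_1,\dots,P_m$, with corresponding $(T_i,b_i)$ and $(T'_i,b'_i)$. By construction $X=\bigcup_{i=1}^m P_i$, each ${\rm int}(P_i)\not=\emptyset$, and $f|_{P_i}=T_i|_{P_i}+b_i$, $f'|_{P_i}=T'_i|_{P_i}+b'_i$, so (\ref{L3.13}) holds. It remains to check $P_i\cap{\rm int}(P_j)=\emptyset$ for $i\not=j$. If $P_i$ and $P_j$ belong to the same equivalence class, this is the disjointness guaranteed by Proposition \ref{pro2.6} within that class (and since each $P_j$ is closed, ${\rm int}(P_i)\cap{\rm int}(P_j)=\emptyset$ upgrades to $P_i\cap{\rm int}(P_j)=\emptyset$, exactly as in the passage from (\ref{3.12}) to the displayed line following it). If $P_i$ and $P_j$ belong to different classes, then ${\rm int}(P_i)\cap{\rm int}(P_j)\subset{\rm int}(Q)\cap{\rm int}(Q')=\emptyset$, and again closedness of $P_j$ promotes this to $P_i\cap{\rm int}(P_j)=\emptyset$. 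This completes the proof. The main obstacle, as indicated, is making sure the refinement and the application of Proposition \ref{pro2.6} never split a region in a way that destroys the common affine structure; grouping by affine data before invoking Proposition \ref{pro2.6} resolves this.
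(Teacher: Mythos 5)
Your proposal is correct and follows essentially the same route as the paper: a common refinement $\Lambda_i\cap\Lambda'_j$ of the two polyhedral decompositions, discarding pieces with empty interior, combined with the grouping-by-affine-data argument and Proposition \ref{pro2.6} already used to derive Proposition \ref{pro3.2}. The only (harmless) difference is the order of operations: if you first apply Proposition \ref{pro3.2} to $f$ and to $f'$ separately and then refine, the condition $P_i\cap{\rm int}(P_j)=\emptyset$ is inherited automatically by the refinement (since ${\rm int}(\Lambda_i\cap\Lambda'_j)\subset{\rm int}(\Lambda_i)\cap{\rm int}(\Lambda'_j)$), so your extra grouping and cleanup step becomes unnecessary.
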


Now we are ready to establish the main result in this section, which shows that any piecewise linear function defined on an infinite dimensional space $X$ can be decomposed into the sum of a linear function on an infinite dimensional closed subspace of $X$ and a piecewise linear function on a finite dimensional subspace of $X$.

\begin{them}\label{thm3.0}
Let $(P_1,T_1,b_1),\cdots,(P_m,T_m,b_m)\in \mathcal{P}(X)\times\mathcal{L}(X,Y)\times Y$  and  $f\in\mathcal{PL}(X,Y)$ be such that (\ref{3.14}) and (\ref{L3.17}) hold. Then $X_f:={\rm lin}\left(\bigcap\limits_{i=1}^m P_i\right)$ is a closed subspace of $X$ with ${\rm codim}\left(X_f\right)<\infty$ and
$$T_1(x)=\cdots=T_m(x)\quad\forall x\in X_f.$$
\end{them}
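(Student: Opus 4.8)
The plan is to establish the two claims---that $X_f$ is a closed subspace of finite codimension, and that all the $T_i$ restrict to the same map on it---in turn. Write $P:=\bigcap_{i=1}^{m}P_i$; since ${\rm lin}$ is of interest only for nonempty sets, I take $P\neq\emptyset$. First I would record each piece as $P_i=\{x\in X:\langle u_{ik}^*,x\rangle\le t_{ik},\ k\in\overline{1q_i}\}$ with $u_{ik}^*\in X^*\setminus\{0\}$ (a constraint with $u_{ik}^*=0$ is redundant when $t_{ik}\ge0$ and makes $P_i$ empty otherwise, contradicting ${\rm int}(P_i)\neq\emptyset$). Then $P$ is the polyhedron determined by the finite family $\{(u_{ik}^*,t_{ik})\}_{i,k}$, so the linearity-space formula in (\ref{L2.2}) gives $X_f={\rm lin}(P)=\bigcap_{i}\bigcap_{k\in\overline{1q_i}}\mathcal{N}(u_{ik}^*)$, a finite intersection of closed hyperplanes, hence a closed subspace of $X$ with ${\rm codim}(X_f)\le\sum_{i=1}^{m}q_i<\infty$. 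The same computation, with (\ref{L2.2}) now applied to the nonempty polyhedron $P_i$, shows $X_f\subset\bigcap_{k\in\overline{1q_i}}\mathcal{N}(u_{ik}^*)={\rm lin}(P_i)$ for every $i$, so that $\bar x+X_f\subset P_i$ whenever $\bar x\in P_i$. This inclusion is the one geometric ingredient used below.

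Next I would prove the key step: if $P_i\cap P_j\neq\emptyset$, then $T_i|_{X_f}=T_j|_{X_f}$. Indeed, choose $\bar x\in P_i\cap P_j$; by the inclusion just noted, $\bar x+X_f\subset P_i$ and $\bar x+X_f\subset P_j$, so (\ref{L3.17}) forces $T_i(\bar x+h)+b_i=f(\bar x+h)=T_j(\bar x+h)+b_j$ for all $h\in X_f$. Subtracting the instance $h=0$ and invoking the linearity of $T_i$ and $T_j$ gives $T_ih=T_jh$ for every $h\in X_f$. Only the single-valuedness of $f$ enters here, not its continuity.

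To pass from adjacent pieces to all of them, I would argue by connectedness. Form the graph on $\overline{1m}$ in which $i$ and $j$ are joined by an edge precisely when $P_i\cap P_j\neq\emptyset$. If this graph were disconnected, $\overline{1m}$ would split as $A\sqcup B$ with $P_i\cap P_j=\emptyset$ for all $i\in A$ and $j\in B$; then $\bigcup_{i\in A}P_i$ and $\bigcup_{j\in B}P_j$ would be disjoint, nonempty (each $P_i\neq\emptyset$), closed, and cover $X$ by (\ref{3.14}), contradicting the connectedness of $X$. Hence the graph is connected, any two indices are joined by a path, and applying the key step edge by edge along such a path yields $T_i|_{X_f}=T_j|_{X_f}$ for all $i,j$.

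The step I expect to be the real point is the inclusion $\bar x+X_f\subset P_i\cap P_j$ in the key step: a single common point $\bar x$ only yields the scalar identity $T_i\bar x+b_i=T_j\bar x+b_j$, which is far too weak. What makes the argument work is that $X_f$ lies in the linearity space of every piece, so that $\bar x+X_f$, a whole affine copy of $X_f$, sits inside the overlap $P_i\cap P_j$, where the two affine formulas for $f$ are forced to coincide. Everything else is routine: the description of ${\rm lin}$ via (\ref{L2.2}) and the elementary connectedness observation.
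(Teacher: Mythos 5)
Your proof is correct, and it takes a genuinely different and far more elementary route than the paper's. The paper fixes two indices $i,i'$, reduces to a finite-dimensional complement $X_2$ of $X_f$, and studies the segment joining an interior point of $\hat P_i$ to one of $\hat P_{i'}$: a Lebesgue-measure/Fubini argument produces a parallel translate of that segment which passes from piece to piece only through relative interiors of facets, and a separation argument (Lemma \ref{lem2.3}) then shows that at each crossing the two adjacent pieces share a relatively open portion of a common hyperplane $\mathcal{N}_k\supset X_f$, on which the two affine formulas for $f$ must coincide. You replace all of this with two observations: (a) since $X_f\subset{\rm lin}(P_i)$ for every $i$, a single common point $\bar x\in P_i\cap P_j$ already forces the whole affine set $\bar x+X_f$ into $P_i\cap P_j$, whence $T_i|_{X_f}=T_j|_{X_f}$; and (b) the intersection graph of the pieces is connected, since otherwise $X$ would split into two disjoint nonempty closed sets. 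This dispenses with the prime-generator machinery, the finite-dimensional reduction, and the measure-theoretic genericity argument. The only thing the paper's chain yields beyond yours is that consecutive operators agree on a codimension-one subspace $\mathcal{N}_k$ rather than merely on $X_f$; that extra information is not used in the statement or in its later applications, so nothing is lost.

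One point you should not set aside by fiat: $\bigcap_{i=1}^mP_i$ can genuinely be empty (three consecutive closed intervals covering $\mathbb{R}$ already do this), and the theorem is invoked later with $X_f$ understood as the intersection of the null spaces of all defining functionals, that is, as $\bigcap_{i=1}^m{\rm lin}(P_i)$. Rather than assuming $P\neq\emptyset$, take this as the definition of $X_f$; it coincides with ${\rm lin}\big(\bigcap_{i=1}^mP_i\big)$ whenever that intersection is nonempty, by (\ref{L2.2}) applied to each (nonempty) $P_i$. Your key step and your connectedness argument use only the inclusion $X_f\subset{\rm lin}(P_i)$ for each individual $i$, so they go through verbatim under this reading, and the restriction to $P\neq\emptyset$ becomes unnecessary.
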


\begin{proof}
For each $i\in\overline{1m}$, take a prime generator group $\{(x_{ij}^*,t_{ij}):\;j\in\overline{1\nu_i}\}$  of $P_i$. Then,
\begin{equation}\label{3.17}
P_i=\{x\in X:\;\langle x_{ij}^*,x\rangle\leq t_{ij},\;j\in\overline{1\nu_i}\}
\end{equation}
and
\begin{equation}\label{3.17'}
P_i\not=\{x\in X:\;\langle x_{ij}^*,x\rangle\leq t_{ij},\;j\in\overline{1\nu_i}\setminus\{j'\}\}\quad\forall j'\in\overline{1\nu_i}.
\end{equation}
Hence, by (\ref{L2.2}), $X_1:=X_f=
\bigcap\limits_{i\in\overline{1m}}\bigcap\limits_{j\in\overline{1\nu_i}}\mathcal{N}(x_{ij}^*)$
is a closed subspace of $X$ with ${\rm codim}(X_1)\leq\sum\limits_{i=1}^m\nu_i$,  and so there exists a closed subspace $X_2$ of $X$ such that
\begin{equation}\label{3.19}
X=X_1\oplus X_2\;\;{\rm and}\;\; {\rm codim}(X_1)={\rm dim}(X_2)<\infty.
\end{equation}
Let
\begin{equation}\label{3.20P}
\hat P_i:=\{x\in X_2:\;\langle x_{ij}^*,x\rangle\leq t_{ij},\; \;j\in\overline{1\nu_i}\}.
\end{equation}
Then, by (\ref{3.17}) and Proposition \ref{pro2.1},
\begin{equation}\label{P3.17}
P_i=X_1+\hat P_i\;\;\;{\rm and}\;\;\;{\rm int}(P_i)=X_1+{\rm int}_{X_2}(\hat P_i).
\end{equation}
It follows from (\ref{3.14}) and  (\ref{3.19}) that
\begin{equation}\label{3.16}
{\rm int}_{X_2}(\hat P_i)\not=\emptyset\;\; {\rm and}\;\;\hat P_i\cap {\rm int}_{X_2}(\hat P_{i'})=\emptyset\;\;\forall i,i'\in\overline{1m}\;{\rm with}\;i\not=i'.
\end{equation}
Fix two arbitrary distinct indices $i$ and $i'$ in $\overline{1m}$. We only need to prove $T_i|_{X_1}=T_{i'}|_{X_1}$.  To prove this, let
$$F_{j}^\circ(P_i):=\left\{x\in X:\;\langle x_{ij}^*,x\rangle=t_{ij}\;{\rm and}\;\langle x_{il}^*,x\rangle<t_{il}\;{\rm for\;all}\;l\in\overline{1\nu_i}\setminus\{j\}\right\}$$
and
\begin{equation}\label{circ}
F_{j}^\circ(\hat P_i):=\{x\in X_2:\;\langle x_{ij}^*,x\rangle=t_{ij}\;{\rm and}\;\langle x_{il}^*,x\rangle<t_{il}\;{\rm for\;all}\;l\in\overline{1\nu_i}\setminus\{j\}\}.
\end{equation}
Then, by the definition of $X_1$ and (\ref{3.19}), $F_{j}^\circ(P_i)=X_1+F_{j}^\circ(\hat P_i)$ and  $F_{j}^\circ(\hat P_i)\not=\emptyset$ (thanks to Lemma \ref{lem2.2}). Take $(\bar u,\bar u')\in{\rm int}_{X_2}(\hat P_i)\times{\rm int}_{X_2}(\hat P_{i'})$ and $u^*\in X_2^*\setminus\{0\}$ such that $\langle u^*,\bar u'-\bar u\rangle\not=0$. Then there exists $\delta>0$ such that
\begin{equation}\label{3.23n}
\bar u+B_{X_3}(0,\delta)\subset{\rm int}_{X_2}(\hat P_i)\;\;{\rm and}\;\; \bar u'+B_{X_3}(0,\delta)\subset{\rm int}_{X_2}(\hat P_{i'}),
\end{equation}
where $X_3:=\mathcal{N}(u^*)=\{x\in X_2:\;\langle u^*,x\rangle=0\}$.
Hence
\begin{equation}\label{3.23}
{\rm dim}(X_3)={\rm dim}(X_2)-1,\;\;X_2=X_3\oplus\mathbb{R}(\bar u'-\bar u)
\end{equation}
and
\begin{equation}\label{3.24}
{\rm int}_{X_2}\big([\bar u,\;\bar u']+B_{X_3}(0,\delta)\big)=(\bar u,\;\bar u)+B_{X_3}(0,\delta)\not=\emptyset,
\end{equation}
where $[\bar u,\;\bar u']:=\{\bar u+t(\bar u'-\bar u):\;0\leq t\leq 1\}$ and $(\bar u,\;\bar u'):=\{\bar u+t(\bar u'-\bar u):\;0< t<1\}$. For each $z\in B_{X_3}(0,\delta)$, let
$$I_z:=\{i\in\overline{1m}:\;\hat P_i\cap (z+[\bar u, \;\bar u'])\;{\rm contains\;at\;least\;two\;points}\}$$
and
$$I_z^\circ:=\{i\in\overline{1m}:\;{\rm int}_{X_2}(\hat P_i)\cap (z+[\bar u, \;\bar u'])\not=\emptyset\}.$$
Then  $I_z^\circ\subset I_z$, and $\hat P_i\cap (z+[\bar u, \;\bar u'])$ contains at most  an element for all $i\in\overline{1m}\setminus I_z$. Noting that $X_2=\bigcup\limits_{i\in\overline{1m}}\hat P_i$ (thanks to (\ref{3.19})---(\ref{P3.17}) and (\ref{3.14})), it follows that
\begin{equation}\label{3.25}
z+[\bar u,\;\bar u']=\bigcup\limits_{i\in I_z}\hat P_i\cap (z+[\bar u, \;\bar u'])\quad\forall z\in B_{X_3}(0,\delta).
\end{equation}
Regarding $X_2$ as the Euclidean space $\mathbb{R}^{{\rm dim}(X_2)}$ (without loss of generality), let $\mu_{X_2}$ and $\mu_{X_3}$ denote the Lebesgue measures on $X_2$ and $X_3$, respectively. Setting
$$E_0:=\{z\in B_{X_3}(0,\delta):\;I_z^\circ\not=I_z\},$$
we claim that
$\mu_{X_3}(E_0)=0$.
To prove this, let $z$ be an arbitrary element in $E_0$. Then there exists $i_z\in I_z$ such that $i_z\not\in I^\circ_z$, and so
$\hat P_{i_z}\cap (z+[\bar u, \;\bar u'])\subset\hat P_{i_z}\setminus{\rm int}_{X_2}(\hat P_{i_z})$. Since $\hat P_{i_z}\setminus{\rm int}_{X_2}(\hat P_{i_z})$ is the union of finitely many faces of $\hat P_{i_z}$,
it follows from Proposition \ref{pro2.5} that there exists a face of $\hat P_{i_z}$ containing the convex set $\hat P_{i_z}\cap (z+[\bar u, \;\bar u'])$, that is, there exists $v^*\in X_2^*\setminus\{0\}$ such that
$$\hat P_{i_z}\cap (z+[\bar u, \;\bar u'])\subset\{x_2\in \hat P_{i_z}:\;\langle v^*,z_2\rangle=\sup\limits_{x\in \hat P_{i_z}}\langle v^*,x\rangle\}=\hat P_{i_z}\cap(v+\mathcal{N}(v^*))$$
for some $v\in X_2$ with $\langle v^*,v\rangle=\sup\limits_{x\in \hat P_{i_z}}\langle v^*,x\rangle$. Since $\hat P_{i_z}\cap (z+[\bar u, \;\bar u'])$ is a segment containing at least two points (thanks to the definition of $I_z$) and $v^*$ is linear, the entire segment $z+[\bar u, \;\bar u']$ is contained in the hyperplane $v+\mathcal{N}(v^*)$.  Noting that each $\hat P_i$ (as a polyhedron in $X_2$) has finitely many faces, it follows that  there exist $v_1^*,\cdots,v_q^*\in X_2^*\setminus\{0\}$ and $v_1,\cdots,v_q\in X_2$ such that $z+[\bar u,\;\bar u']\subset \bigcup\limits_{k=1}^q(v_k+\mathcal{N}(v_k^*))$ for all $z\in E_0$,
that is, $E_0+[\bar u,\;\bar u']\subset \bigcup\limits_{k=1}^q(v_k+\mathcal{N}(v_k^*))$.
Since each $\mathcal{N}(v_k^*)$ is of dimension ${\rm dim}(X_2)-1$,
$\mu_{X_2}(E_0+[\bar u,\;\bar u'])\leq\mu_{X_2}\left(\bigcup\limits_{k=1}^q(v_k+\mathcal{N}(v_k^*))\right)\leq \sum\limits_{k=1}^q\mu_{X_2}(v_k+\mathcal{N}(v_k^*))=0$.
Thus, by   (\ref{3.23}) and the Fubini Theorem on measure, one has $\mu_{X_3}(E_0)=0$.
Next, let
$$z\in B_{X_3}(0,\delta)\setminus E_0.$$
Then $I_{z}=I_{z}^\circ$.
Thus, by (\ref{3.25}) and the definition of $I_{z}^\circ$,
$$z+[\bar u,\;\bar u']=\bigcup\limits_{\kappa\in I_{z}^\circ}\hat P_\kappa\cap (z+[\bar u, \;\bar u'])\;\;{\rm and}\;\;{\rm int}_{X_2}(\hat P_\kappa)\cap ( z+[\bar u, \;\bar u'])\not=\emptyset\;\;\;\forall  \kappa\in I_{z}^\circ.$$
Hence, by (\ref{3.16}),  ${\rm int}_{X_2}(\hat P_\kappa)\cap ( z+[\bar u, \;\bar u'])$ and ${\rm int}_{X_2}(\hat P_{\kappa'})\cap ( z+[\bar u, \;\bar u'])$ are two disjoint open segments in $z+[\bar u,\;\bar u']$   for any $\kappa,\kappa'\in I^\circ_z$ with $\kappa\not=\kappa'$. Noting that  $z+\bar u\in{\rm int}_{X_2}(\hat P_i)$ and $z+\bar u'\in{\rm int}_{X_2}(\hat P_{i'})$ (by (\ref{3.23n})),
it follows  that there exist  $\iota_0^z,\iota_1^z,\cdots,\iota_{\gamma_z}^z\in\overline{1m}$ and $\lambda_0^z,\lambda_1^z,\cdots,\lambda_{\gamma_z}^z\in[0,\;1)$ such that
\begin{equation}\label{3.28}
I_z=I^\circ_z=\{\iota_0^z,\iota_1^z,\cdots,\iota_{\gamma_z}^z\},\; \iota_0^z=i,\;\iota_{\gamma_z}^z=i',\;\lambda_0^z=0,\;\lambda_{k-1}^z<\lambda_{k}^z,
\end{equation}
$$
z+\bar u+[0,\;\lambda_1^z)(\bar u'-\bar u)= (z+[\bar u,\;\bar u'])\cap{\rm int}_{X_2}(\hat P_i),
$$
$$
z+\bar u+(\lambda_{\gamma_z}^z,\;1](\bar u'-\bar u)= (z+[\bar u,\;\bar u'])\cap{\rm int}_{X_2}(\hat P_{i'}),
$$
$$
z+\bar u+[\lambda_{k-1}^z,\;\lambda_{k}^z](\bar u'-\bar u)=(z+[\bar u,\;\bar u'])\cap \hat P_{\iota_{k-1}^z}
$$
and
$$
z+\bar u+(\lambda_{k-1}^z,\;\lambda_{k}^z)(\bar u'-\bar u)= (z+[\bar u,\;\bar u'])\cap{\rm int}_{X_2}(\hat P_{\iota_{k-1}^z})
$$
for all $k\in\overline{1{\gamma_z}}$. Therefore
\begin{equation}\label{3.31z}
z+\bar u+\lambda_{k}^z(\bar u'-\bar u)\in \hat P_{\iota_{k-1}^z}\cap \hat P_{\iota_{k}^z}\quad\forall k\in\overline{1{\gamma_z}}.
\end{equation}
This and (\ref{3.16}) imply that $z+\bar u+\lambda_{k}^z(\bar u'-\bar u)\not\in {\rm int}_{X_2}(\hat P_{\iota_{k-1}^z})\cup {\rm int}_{X_2}(\hat P_{\iota_{k}^z})$ for all $k\in\overline{1{\gamma_z}}$.
Letting
\begin{equation}\label{Jz1}
J^-_{(z,k)}:=\{j\in  \overline{1\nu_{\iota_{k-1}^{z}}}:\;\langle x_{\iota_{k-1}^{z}j}^*,z+\bar u+\lambda_{k}^z(\bar u'-\bar u)\rangle=t_{\iota_{k-1}^{z}j}\}
\end{equation}
and
\begin{equation}\label{Jz2}
J_{(z,k)}:=\{j\in \overline{1\nu_{\iota_k^{z}}}:\;\langle x^*_{\iota_{k}^{z}j},z+\bar u+\lambda_{k}^z(\bar u'-\bar u)\rangle=t_{\iota_{k}^{z}j}\},
\end{equation}
it follows from (\ref{3.20P}) and Corollary \ref{pro2.1} that $J^-_{(z,k)}\not=\emptyset$ and $J_{(z,k)}\not=\emptyset$ for all $k\in\overline{1{\gamma_z}}$.  We claim that there exist $\bar z\in B_{X_3}(0,\delta)\setminus E_0$  and $(j_k^-,j_k)\in \overline{1\nu_{\iota_{k-1}^{\bar z}}}\times\overline{1\nu_{\iota_k^{\bar z}}}$ such that
\begin{equation}\label{Jz3}
J^-_{(\bar z,k)}=\{j_k^-\}\;\;{\rm  and}\;\; J_{(\bar z,k)}=\{j_k\}\quad\forall k\in\overline{1\gamma_{\bar z}}.
\end{equation}
Indeed, if this is not the case, for each $z\in B_{X_3}(0,\delta)\setminus E_0$ there exists $k\in\overline{1\gamma_z}$ such that either $J^-_{(z,k)}$ or $J_{(z,k)}$ contains at least two elements; we assume without loss of generality that there exist $k\in\overline{1\gamma_z}$ and $j_1,j_2\in J_{(z,k)}$ such that $j_1\not=j_2$. Then, by (\ref{3.31z}) and (\ref{Jz2}),
\begin{equation}\label{3.36z}
\quad z+\bar u+\lambda_{k}^z(\bar u'-\bar u)\in\big\{x\in \hat P_{\iota_{k}^{z}}:\;\langle x^*_{\iota_{k}^{z}j_1},x\rangle=t_{\iota_{k}^{z}j_1}\;\;{\rm and}\;\;\langle x^*_{\iota_{k}^{z}j_2},x\rangle=t_{\iota_{k}^{z}j_2}\big\}.
\end{equation}
Since $\big\{(x_{\iota_{k}^{z}1}^*,t_{\iota_{k}^{z}1}),(x_{\iota_{k}^{z}2}^*,t_{\iota_{k}^{z}2}),\cdots,
(x_{\iota_{k}^{z}\nu_{\iota_{k}^{z}}}^*,t_{\iota_{k}^{z}\nu_{\iota_{k}^{z}}})\big\}$ is  a prime generator group of $P_{\iota_{k}^{z}}$ and ${\rm int}(P_{\iota_{k}^{z}})$ is nonempty, this and  Lemma \ref{lemN2.1} imply that $x^*_{\iota_{k}^{z}j_1}$ and $x^*_{\iota_{k}^{z}j_2}$ are linearly independent. Hence ${\rm codim}(\mathcal{N}(x^*_{\iota_{k}^{z}j_1})\cap\mathcal{N}(x^*_{\iota_{k}^{z}j_2}))=2$. Noting that $X_1$ is a subspace of  $\mathcal{N}(x^*_{\iota_{k}^{z}j_1})\cap\mathcal{N}(x^*_{\iota_{k}^{z}j_2})$, it follows from (\ref{3.19})  that $X_2\cap\mathcal{N}(x^*_{\iota_{k}^{z}j_1})\cap\mathcal{N}(x^*_{\iota_{k}^{z}j_2})$, as a linear subspace of $X_2$, is of codimension 2. Hence,  by (\ref{3.36z}), $\hat P_{\iota_{k}^z}$ has   a face $\hat F$ such that
$z+\bar u+\lambda_{k}^z(\bar u'-\bar u)\in\hat  F$, ${\rm dim}(\hat F)\leq{\rm dim}(X_2)-2$
and so
$$z+[\bar u,\;\bar u']\subset\hat F-(\bar u+\lambda_{k}^z(\bar u'-\bar u))+[\bar u,\;\bar u']\subset\hat F+[\bar u-\bar u',\;\bar u'-\bar u].$$
Since each polyhedron has finitely many faces (cf. \cite{LYY, Ro}) , there exist finitely many linear subspaces $S_1,\cdots,S_l$ of $X_2$ and $\omega_1,\cdots,\omega_l\in X_2$ such that ${\rm dim}(S_j)\leq{\rm dom}(X_2)-2$ ($j=1,\cdots,l$) and
$z+[\bar u,\;\bar u']\subset \bigcup\limits_{j=1}^l(S_j+\omega_j+[\bar u-\bar u',\;\bar u'-\bar u])$ for all $z\in B_{X_3}(0,\delta)\setminus E_0$.
This means that $(B_{X_3}(0,\delta)\setminus E_0)+[\bar u,\;\bar u']\subset  \bigcup\limits_{j=1}^l(S_j+\omega_j+[\bar u-\bar u',\;\bar u'-\bar u])$ and so
$$\mu_{X_2}((B_{X_3}(0,\delta)\setminus E_0)+[\bar u,\;\bar u'])\leq\sum\limits_{j=1}^l\mu_{X_2}(S_j+\omega_j+[\bar u-\bar u',\;\bar u'-\bar u])=0.$$
Thus, by (\ref{3.23}) and the Fubini theorem, $\mu_{X_3}(B_{X_3}(0,\delta)\setminus E_0)=0$. Hence
$\mu_{X_3}(E_0)\geq\mu_{X_3}(B_{X_3}(0,\delta))>0$,
contradicting  $\mu_{X_3}(E_0)=0$.
This shows that (\ref{Jz3}) holds, that is, there exist $\bar z\in B_{X_3}(0,\delta)\setminus E_0$  and $(j_k^-,j_k)\in \overline{1\nu_{\iota_{k-1}^{\bar z}}}\times\overline{1\nu_{\iota_k^{\bar z}}}$ such that
$$
\bar x_k:=\bar z+\bar u+\lambda_k^{\bar z}(\bar u'-\bar u)\in F_{j_{k}^-}^\circ(\hat P_{\iota_{k-1}^{\bar z}})\cap F_{j_k}^\circ(\hat P_{\iota_{k}^{\bar z}})\quad\forall k\in \overline{1\gamma_{\bar z}}.
$$
Noting that $F^\circ_{j_{k}^-}(P_{\iota_{k-1}^{\bar z}})=X_1+F^\circ_{j_{k}^-}(\hat P_{\iota_{k-1}^{\bar z}})$ and
$F^\circ_{j_k}(P_{\iota_{k}^{\bar z}})=X_1+F^\circ_{j_k}(\hat P_{\iota_{k}^{\bar z}})$,
one has
$$\bar x_k\in F^\circ_{j_{k}^-}(P_{\iota_{k-1}^{\bar z}})\cap F^\circ_{j_k}(P_{\iota_{k}^{\bar z}})\quad\forall k\in\overline{1\gamma_{\bar z}}.$$
It follows from Lemma \ref{lem2.3} that for each $k\in\overline{1\gamma_{\bar z}}$,
$$\mathcal{N}_k:=\mathcal{N}(x^*_{\iota_{k-1}^{\bar z}j_{k}^-})=\mathcal{N}(x^*_{\iota_{k}^{\bar z}j_k})$$ and
$$F^\circ_{j_{k}^-}(P_{\iota_{k-1}^{\bar z}})\cap B_X(\bar x_k,r_k)=F^\circ_{j_k}(P_{\iota_{k}^{\bar z}})\cap B_X(\bar x_k,r_k)=(\bar x_k+\mathcal{N}_k)\cap B_X(\bar x_k,r_k)$$ for some $r_k>0$. Hence $(\bar x_k+\mathcal{N}_k)\cap B_X(\bar x_k,r_k)\subset P_{\iota_{k-1}^{\bar z}}\cap P_{\iota_{k}^{\bar z}}$ for all $k\in\overline{1\gamma_{\bar z}}$. This and  (\ref{L3.17}) imply that
$$T_{\iota_{k-1}^{\bar z}}|_{(\bar x_k+\mathcal{N}_k)\cap B_X(\bar x_k,r_k)}+b_{\iota_{k-1}^{\bar z}}=T_{\iota_{k}^{\bar z}}|_{(\bar x_k+\mathcal{N}_k)\cap B_X(\bar x_k,r_k)}+
b_{\iota_{k}^{\bar z}}\quad\forall k\in\overline{1\gamma_{\bar z}}.$$
Since $\mathcal{N}_k$ is a maximal subspace of $X$ and both $T_{\iota_{k-1}^{\bar z}}$ and $T_{\iota_{k}^{\bar z}}$ are linear,
$$T_{\iota_{k-1}^{\bar z}}|_{\mathcal{N}_k}=T_{\iota_{k}^{\bar z}}|_{\mathcal{N}_k}\quad\forall k\in\overline{1\gamma_{\bar z}}.$$
Noting that $X_1\subset \mathcal{N}_k$ (thanks to the definitions of $\mathcal{N}_k$ and $X_1$), it follows that $T_{\iota_{k-1}^{\bar z}}|_{X_1}=T_{\iota_{k}^{\bar z}}|_{X_1}$ for all $k\in\overline{1\gamma_{\bar z}}$, and so
$T_i|_{X_1}=T_{\iota_0^{\bar z}}|_{X_1}=T_{\iota_{\gamma_{\bar z}}^{\bar z}}|_{X_1}=T_{i'}|_{X_1}$
(thanks to (\ref{3.28})). The proof is complete.
\end{proof}

\begin{them}\label{thm3.1}
Let  $f\in\mathcal{PL}(X,Y)$, and let $X_1$ and $X_2$ be closed subspaces of $X$ such that
\begin{equation}\label{3.15}
\;\;X_1\subset X_f,\;\;X=X_1\oplus X_2,\;{\rm codim}(X_1)={\rm dim}(X_2)<\infty,
\end{equation}
where $X_f$ is as in Theorem \ref{thm3.0}. Then there exist $T\in\mathcal{L}(X_1,Y)$, a finite dimensional subspace $\hat Y$ of $Y$ and $g\in\mathcal{PL}(X_2,\hat Y)$ such that
\begin{equation}\label{Tg}
f(x_1+x_2)=T(x_1)+g(x_2)\quad\forall (x_1,x_2)\in X_1\times X_2.
\end{equation}
\end{them}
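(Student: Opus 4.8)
The plan is to read off the required representation by assembling Proposition~\ref{pro3.2}, Theorem~\ref{thm3.0} and Proposition~\ref{pro2.1}; essentially all of the substantive work has already been done in Theorem~\ref{thm3.0}, and what is left is bookkeeping. First I would fix $(P_i,T_i,b_i)\in\mathcal{P}(X)\times\mathcal{L}(X,Y)\times Y$ $(i\in\overline{1m})$ as in Proposition~\ref{pro3.2}, so that (\ref{3.14}) and (\ref{L3.17}) hold, and write $P_i=\{x\in X:\langle x_{ij}^*,x\rangle\le t_{ij},\ j\in\overline{1\nu_i}\}$. Applying (\ref{L2.2}) to the polyhedron $\bigcap_{i=1}^mP_i$ gives $X_f=\bigcap_{i=1}^m\bigcap_{j=1}^{\nu_i}\mathcal{N}(x_{ij}^*)$, so in particular $X_1\subset X_f\subset\bigcap_{j=1}^{\nu_i}\mathcal{N}(x_{ij}^*)$ for every $i$. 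Since $T_1,\dots,T_m$ coincide on $X_f$ by Theorem~\ref{thm3.0} and $X_1\subset X_f$, I set $T:=T_1|_{X_1}=\cdots=T_m|_{X_1}$, which belongs to $\mathcal{L}(X_1,Y)$ as the restriction of a continuous linear operator; this will be the $T$ in (\ref{Tg}).

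Next I would trim each cell to $X_2$. Because $X_1\subset\bigcap_j\mathcal{N}(x_{ij}^*)$, ${\rm dim}(X_2)={\rm codim}(X_1)<\infty$ and $X=X_1\oplus X_2$, Proposition~\ref{pro2.1} applies to every $P_i$ and yields $P_i=X_1+\hat P_i$, where $\hat P_i:=\{x_2\in X_2:\langle x_{ij}^*,x_2\rangle\le t_{ij},\ j\in\overline{1\nu_i}\}$ is a polyhedron in the finite-dimensional space $X_2$. From $X=\bigcup_{i=1}^mP_i=\bigcup_{i=1}^m(X_1+\hat P_i)$ and the uniqueness of the decomposition $x=x_1+x_2$ in $X=X_1\oplus X_2$ it follows that every $x_2\in X_2$ lies in some $\hat P_i$, hence $X_2=\bigcup_{i=1}^m\hat P_i$. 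I then put $g:=f|_{X_2}$; given $(x_1,x_2)\in X_1\times X_2$, choosing $i$ with $x_2\in\hat P_i$ gives $x_2\in P_i$ and $x_1+x_2\in X_1+\hat P_i=P_i$, so by (\ref{L3.17}) and the linearity of $T_i$
$$f(x_1+x_2)=T_i(x_1)+T_i(x_2)+b_i=T(x_1)+\big(T_i(x_2)+b_i\big)=T(x_1)+f(x_2)=T(x_1)+g(x_2),$$
which is exactly (\ref{Tg}).

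Finally I would supply the finite-dimensional target for $g$: take $\hat Y:={\rm span}\big(\bigcup_{i=1}^m(T_i(X_2)\cup\{b_i\})\big)$, a finite-dimensional (hence closed) subspace of $Y$, since each $T_i(X_2)$ is finite-dimensional. By (\ref{L3.17}) we have $g|_{\hat P_i}=T_i|_{\hat P_i}+b_i$, so $g(X_2)\subset\hat Y$ and $(T_i|_{X_2},\hat P_i,b_i)\in\mathcal{L}(X_2,\hat Y)\times\mathcal{P}(X_2)\times\hat Y$; together with $X_2=\bigcup_{i=1}^m\hat P_i$ this is precisely the defining property (\ref{pl2}) of $g\in\mathcal{PL}(X_2,\hat Y)$, completing the proof. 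I do not expect a genuine obstacle: the one non-trivial ingredient is Theorem~\ref{thm3.0} (that the $T_i$ agree on $X_f$), and the rest is routine. The points most needing care are the uniform applicability of Proposition~\ref{pro2.1} across all $i$ (which is why the identity $X_f=\bigcap_{i,j}\mathcal{N}(x_{ij}^*)$ is recorded first), the equality $X_2=\bigcup_i\hat P_i$ (where the directness of $X=X_1\oplus X_2$ is used), and the verification that the auxiliary space $\hat Y$ can indeed be taken finite-dimensional.
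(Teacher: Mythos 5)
Your proof is correct and follows essentially the same route as the paper's: define $T$ as the common restriction $T_i|_{X_1}$ supplied by Theorem~\ref{thm3.0}, split each $P_i$ as $X_1+\hat P_i$ via Proposition~\ref{pro2.1}, deduce $X_2=\bigcup_i\hat P_i$, and take $\hat Y$ to be the span of the finitely many finite-dimensional sets $T_i(X_2)$ together with the $b_i$. The only cosmetic difference is that you set $g:=f|_{X_2}$ directly while the paper defines $g$ by the explicit piecewise formula $g|_{\hat P_i}=T_i|_{\hat P_i}+b_i$ and notes it is well defined; the two definitions coincide.
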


\begin{proof}
Take $(P_1,T_1,b_1),\cdots,(P_m,T_m,b_m)\in \mathcal{P}(X)\times\mathcal{L}(X,Y)\times Y$ such that (\ref{3.14}) and (\ref{L3.17}) hold. Then, by Theorem \ref{thm3.0},
\begin{equation}\label{T}
T_1(x)=\cdots=T_m(x)\quad\forall x\in X_f.
\end{equation}
Define $T:X_1\rightarrow Y$ as $T(x):=T_1(x)$ for all $x\in X_1$. Clearly, $T\in\mathcal{L}(X_1,Y)$.  For each $i\in\overline{1m}$, by (\ref{3.15}) and Proposition \ref{pro2.1},  take a polyhedron $\hat P_i$ in $X_2$ such that $P_i=X_1+\hat P_i$. It follows from (\ref{3.14}) that  $X_2=\bigcup\limits_{i=1}^m\hat P_i$. Moreover, since $X_1\subset X_f$,  (\ref{L3.17}) and (\ref{T}) imply  that
\begin{equation}\label{PL3.33}
\;\;\;\;f(x_1+x_2)=T_i(x_1+x_2)+b_i=T_i(x_1)+T_i(x_2)+b_i=T(x_1)+T_i(x_2)+b_i
\end{equation}
for all $(x_1,x_2)\in X_1\times \hat P_i$ and $i\in\overline{1m}$. Hence $f(x_2)=T_i(x_2)+b_i$ for all $i\in\overline{1m}$ and $x_2\in\hat P_i$. It follows  that
$$T_i(x_2)+b_i=T_{i'}(x_2)+b_{i'}\quad\forall i,i'\in\overline{1m}\;{\rm and}\;\forall x_2\in \hat P_{i}\cap\hat P_{i'}.$$
Let $\hat Y:={\rm span}\left(\bigcup\limits_{i=1}^m(T_i(X_2)+b_i)\right)$, and let $g:X_2\rightarrow\hat Y$ be such that
$$
g(x_2):=\left\{\begin{array}{ll}
T_1(x_2)+b_1, & {\rm if}\; x_2\in\hat P_1\\
\cdots\cdots\cdots\cdots & \cdots\cdots\cdots\cdot\\
T_m(x_2)+b_m, & {\rm if}\; x_2\in\hat P_m.
\end{array}
\right.
$$
Then $\hat Y$ is a subspace of $Y$ with ${\rm dim}(\hat Y)\leq m{\rm dim}(X_2)+1<+\infty$ and $g$ is well defined. It is easy  from (\ref{PL3.33}) to verify that (\ref{Tg}) holds.
\end{proof}

\section{Finite dimension reduction method to solve (PLP)}
In this section, with the help of Theorems \ref{thm3.0} and  \ref{thm3.1}, we reduce fully piecewise linear  problem (PLP) and linear subproblem (LP)$_i$ in the general normed space framework to the corresponding ones  in the finite-dimensional space framework.
Throughout this section, we assume that {\it  the objective function $f$ and all constraint functions $\varphi_j$ in (PLP) are completely known, that is, $P_i\in\mathcal{P}(X)$, $T_i\in\mathcal{L}(X,Y)$, $b_i\in Y$, $u_{ik}^*,x_{ij}^*\in X^*$, $b_i\in Y$ and $t_{ik},c_{ij}\in\mathbb{R}$ are known data such that (\ref{Poi}), (\ref{wp1'}) and  (\ref{wp2}) hold}. Throughout this section, {\it let $X_1$, $X_2$, $Z$, $\hat T$, $\Pi_Z$ and $C_Z$ be as in (\ref{X1}), (\ref{X2}), (\ref{Z}), (\ref{6.5}), (\ref{Pi}) and (\ref{CZ0}), respectively}.

To establish the main results in this section, we need the following lemma.

\begin{lem}\label{lem5.1}
Suppose that $(\hat T(X_1)\oplus Z)\cap {\rm int}(C)$ is nonempty. Then
\begin{equation}\label{NW4.26}
{\rm int}_Z(C_Z)=\Pi_Z((\hat T(X_1)\oplus Z)\cap {\rm int}(C)).
\end{equation}
\end{lem}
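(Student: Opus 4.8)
The plan is to prove the two inclusions separately, using the fact that $Z$ is finite dimensional (so ${\rm int}_Z$ is the ordinary interior in $Z$) and that $\Pi_Z$ is a continuous linear projection from the closed subspace $\hat T(X_1)\oplus Z$ onto $Z$ with kernel $\hat T(X_1)$. First I would record the elementary observations: $C_Z=\Pi_Z((\hat T(X_1)\oplus Z)\cap C)$ is a convex cone in $Z$ by (\ref{CZ0}) and Proposition \ref{pro2.2}-type reasoning, and $(\hat T(X_1)\oplus Z)\cap{\rm int}(C)$ is a nonempty open convex subset of the normed space $\hat T(X_1)\oplus Z$, hence equal to the relative interior of $(\hat T(X_1)\oplus Z)\cap C$ inside $\hat T(X_1)\oplus Z$ (here I use that if a convex set has nonempty interior relative to an affine subspace, that interior coincides with the set of points where the set is ``locally full'').

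For the inclusion $\supset$: let $w\in(\hat T(X_1)\oplus Z)\cap{\rm int}(C)$ and pick $r>0$ with $B_{\hat T(X_1)\oplus Z}(w,r)\subset C$. Since $\Pi_Z$ is a continuous linear \emph{open} map onto the finite dimensional space $Z$ (a continuous surjective linear map between such spaces is open), $\Pi_Z\big(B_{\hat T(X_1)\oplus Z}(w,r)\big)$ contains a $Z$-neighbourhood of $\Pi_Z(w)$; but this image is contained in $\Pi_Z((\hat T(X_1)\oplus Z)\cap C)=C_Z$, so $\Pi_Z(w)\in{\rm int}_Z(C_Z)$. For the inclusion $\subset$: let $z\in{\rm int}_Z(C_Z)$. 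By (\ref{W4.1}) every element of $Z$ is $\Pi_Z$ of some element of $\hat T(X_1)\oplus Z$, and using the cone structure I would produce a point $c_0\in(\hat T(X_1)\oplus Z)\cap{\rm int}(C)$ (guaranteed to exist by hypothesis); then write $z$ as a point in the relative interior of $C_Z$ and ``pull back'': since $z\in{\rm int}_Z(C_Z)$ there is $z'\in C_Z$ with $z$ on the open segment between $\Pi_Z(c_0)$ and $z'$, say $z=\lambda\Pi_Z(c_0)+(1-\lambda)z'$ with $\lambda\in(0,1)$; choosing $c'\in(\hat T(X_1)\oplus Z)\cap C$ with $\Pi_Z(c')=z'$, the point $w:=\lambda c_0+(1-\lambda)c'$ lies in $(\hat T(X_1)\oplus Z)\cap{\rm int}(C)$ (a convex combination of an interior point of $C$ and a point of $C$, with positive weight on the interior point, lies in ${\rm int}(C)$) and $\Pi_Z(w)=z$.

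The main obstacle I anticipate is the $\subset$ direction, specifically justifying that an interior point of $C_Z$ in $Z$ pulls back to an \emph{interior} point of $C$ in $\hat T(X_1)\oplus Z$ rather than merely to a point of $C$. The segment trick above handles this, but it relies on two facts that deserve care: (a) ${\rm int}\big((\hat T(X_1)\oplus Z)\cap C\big)$, computed in the subspace $\hat T(X_1)\oplus Z$, is nonempty and contains every convex combination $\lambda a+(1-\lambda)b$ with $a$ in it, $b\in C$, $\lambda\in(0,1]$ — the standard line-segment principle for convex sets in normed spaces; and (b) $\Pi_Z$ maps $(\hat T(X_1)\oplus Z)\cap{\rm int}(C)$ \emph{onto} $\Pi_Z((\hat T(X_1)\oplus Z)\cap C)$'s interior, which is exactly what the two inclusions together assert, so the argument must not be circular — hence I keep the pull-back construction purely geometric (segments and convex combinations) and use the openness of $\Pi_Z$ only in the easy direction. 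Finally I would note that nonemptiness of $(\hat T(X_1)\oplus Z)\cap{\rm int}(C)$ is used essentially: it supplies the base point $c_0$ of all the segments, so without the hypothesis the statement can fail (indeed case (i) of Step 3 is precisely the complementary situation).
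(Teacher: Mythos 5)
Your proposal is correct and follows essentially the same route as the paper: the inclusion $\supset$ via openness of $\Pi_Z$ (which holds because $\Pi_Z|_Z$ is the identity), and the inclusion $\subset$ via the line-segment principle for convex sets. Your ``extend the segment slightly past $z$ and take the convex combination $\lambda c_0+(1-\lambda)c'$'' is exactly the paper's choice of $\sigma>0$ with $z+\sigma(z-\bar z)\in C_Z$ followed by the combination with weights $\tfrac{\sigma}{1+\sigma}$ and $\tfrac{1}{1+\sigma}$, so no further comparison is needed.
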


\begin{proof}
By the assumption, take $(\bar x_1, \bar z)\in X_1\times Z$ and $r>0$ such that
\begin{equation}\label{CZ}
\hat T(\bar x_1)+\bar z+rB_{\hat T(X_1)\oplus Z}\subset (\hat T(X_1)\oplus Z)\cap C.
\end{equation}
Noting that the projection  $\Pi_Z$ is an open mapping from $\hat T(X_1)\oplus Z$ to $Z$, (\ref{CZ0}) implies that ${\rm int}_Z(C_Z)\supset\Pi_Z((\hat T(X_1)\oplus Z)\cap {\rm int}(C))$. Hence it suffices to show the converse inclusion. To do this,
let $z\in{\rm int}_Z(C_Z)$. Then there exists  $\sigma>0$ sufficiently small such that $z+\sigma(z-\bar z)\in C_Z$, that is, $\hat T(x_1)+z+\sigma(z-\bar z)\in (\hat T(X_1)\oplus Z)\cap  C$ for some $x_1\in X_1$. It follows from (\ref{CZ}) and the convexity of $C$ that
\begin{eqnarray*}
\hat T\big(\frac{x_1+\sigma\bar x_1}{1+\sigma}\big)+z+\frac{\sigma rB_{\hat T(X_1)\oplus Z}}{1+\sigma}&=&\frac{\hat T(x_1)+z+\sigma(z-\bar z)}{1+\sigma}+\frac{\sigma(\hat T(\bar x_1)+\bar z+rB_{\hat T(X_1)\oplus Z})}{1+\sigma}\\
&\subset& (\hat T(X_1)\oplus Z)\cap C.
\end{eqnarray*}
Hence, by (\ref{CZ0}) and (\ref{Pi}),
$$C_Z\supset\Pi_Z\left(\hat T\big(\frac{x_1+\sigma\bar x_1}{1+\sigma}\big)+z+\frac{\sigma rB_{\hat T(X_1)\oplus Z}}{1+\sigma}\right)=z+\frac{\sigma r\Pi_Z(B_{\hat T(X_1)\oplus Z})}{1+\sigma}.$$
This shows that $z\in{\rm int}_Z(C_Z)$ (because $\Pi_Z$ is an open mapping from $\hat T(X_1)\oplus Z$ onto $Z$).
\end{proof}

Define
$\hat f:X_2\rightarrow Z$ as follows
$$\hat f(x_2):=(\Pi_Z\circ f)(x_2)=\Pi_Z(f(x_2))\quad\forall x_2\in X_2.$$
Then, $\hat f$ is a piecewise linear function between the finite dimensional spaces $X_2$ and $Z$. For each $i\in\overline{1m}$, recall that
$$\hat P_i:=\{x_2\in X_2:\,\langle u_{ik}^*,x_2\rangle\leq t_{ik}\;\;\forall k\in\overline{1q_i}\}.$$
By (\ref{Poi}), (\ref{wp1'}), (\ref{X1}) and  (\ref{X2}),  one has
$$X_2=\bigcup\limits_{i\in\overline{1m}}\hat P_i\;\;\;{\rm and}\;\;\;P_i=X_1+\hat P_i\;\;\forall i\in\overline{1m}.$$
From (\ref{wp2}),  (\ref{6.5}) and the definition of $Z$ (see (\ref{Z})), it is easy to verify that
$$f(X)=\bigcup\limits_{i\in\overline{1m}}(\hat T (X_1)+T_i(\hat P_i)+b_i)\subset \hat T(X_1)+Z=\hat T(X_1)\oplus Z$$
and $f(x_1+x_2)=\hat T(x_1)+f(x_2)\in \hat T(X_1)+f(x_2)$ for all $(x_1,x_2)\in  X_1\times X_2$.
Noting that  $f(x_2)-\hat f(x_2)=f(x_2)-\Pi_Z(f(x_2))\in \hat T(X_1)$, it follows that
\begin{equation}\label{NW4.25}
f(x_1+x_2)\in\hat T(X_1)+\hat f(x_2)\quad\forall (x_1,x_2)\in X_1\times X_2.
\end{equation}
To solve the original piecewise linear vector optimization problem (PLP), consider the following piecewise linear problem in the framework of finite dimensional spaces:
$$C_Z-\min\hat f(x_2)\;\;\;{\rm subject\;to}\;x_2\in X_2\;{\rm and}\;\varphi_1(x_2)\leq0,\cdots,\varphi_l(x_2)\leq0.\leqno{\rm (\widehat{PLP})}$$
Let $\hat A$ denote  the feasible set of $(\widehat{{\rm PLP}})$. Then  the feasible set $A$ of  (PLP) is equal to $X_1+\hat A$.

Next we establish the relationship between the weak Pareto optimal value set and weak Pareto solution set (resp. the Pareto solution set) of (PLP) and that  of ${\rm (\widehat{PLP})}$.

\begin{them}\label{thm5.1}
Let $S^w$ and $\hat S^w$ denote the  weak Pareto solution sets of piecewise linear problems ${\rm (PLP)}$ and ${\rm (\widehat{PLP})}$, respectively. The following statements hold:\\
(i) If
$(\hat T(X_1)\oplus Z)\cap{\rm int}(C)=\emptyset$ then ${\rm WE}(f(A),C)=f(A)$ and $S^w=A$.\\
(ii) If
$(\hat T(X_1)\oplus Z)\cap{\rm int}(C)\not=\emptyset$ then
\begin{equation}\label{NW4.27}
{\rm WE}(f(A),C)=\hat T(X_1)+{\rm WE}(\hat f(\hat A),C_Z)\;\;{\rm and}\;\; S^w=X_1+\hat S^w.
\end{equation}
\end{them}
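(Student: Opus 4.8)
The plan is to first dispose of part (i) and then reduce part (ii) to a statement purely about the relationship between the ordering cone $C$ on $\hat T(X_1)\oplus Z$ and the cone $C_Z$ on $Z$. For part (i), observe that if $(\hat T(X_1)\oplus Z)\cap{\rm int}(C)=\emptyset$ then, since $f(A)\subset \hat T(X_1)\oplus Z$, for any $y_1,y_2\in f(A)$ the difference $y_1-y_2$ lies in $\hat T(X_1)\oplus Z$, so it can never lie in ${\rm int}(C)$; hence no $y\in f(A)$ admits $v\in f(A)$ with $v<_C y$, which gives ${\rm WE}(f(A),C)=f(A)$ immediately, and therefore $S^w=A$ since every feasible point is weakly Pareto.

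For part (ii), the key structural fact is (\ref{NW4.25}) (equivalently (\ref{NW4.25})): $f(x_1+x_2)\in \hat T(X_1)+\hat f(x_2)$, together with $A=X_1+\hat A$ and $\hat T=T_1|_{X_1}=\cdots=T_m|_{X_1}$ being linear. First I would show $f(A)=\hat T(X_1)+\hat f(\hat A)$: the inclusion ``$\subset$'' follows from (\ref{NW4.25}), and ``$\supset$'' follows because for $x_2\in\hat A$ and $x_1\in X_1$ we have $f(x_1+x_2)=\hat T(x_1)+f(x_2)$ and $f(x_2)-\hat f(x_2)\in\hat T(X_1)$, so $\hat T(X_1)+\hat f(x_2)\subset f(A)$. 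Next I want to prove the identity ${\rm WE}(\hat T(X_1)+\hat f(\hat A),C)=\hat T(X_1)+{\rm WE}(\hat f(\hat A),C_Z)$. The crucial linear-algebra lemma needed here is: for $w\in\hat T(X_1)\oplus Z$, one has $w\in{\rm int}(C)$ (as a subset of $Y$) if and only if $\Pi_Z(w)\in{\rm int}_Z(C_Z)$; this is exactly Lemma \ref{lem5.1}, since $\Pi_Z$ is an open surjection from $\hat T(X_1)\oplus Z$ onto $Z$ and $\hat T(X_1)=\ker\Pi_Z\cap(\hat T(X_1)\oplus Z)$. Using this, a point $\hat T(x_1)+z\in\hat T(X_1)+\hat f(\hat A)$ (with $z\in\hat f(\hat A)$) fails to be in ${\rm WE}(\hat T(X_1)+\hat f(\hat A),C)$ iff there are $x_1'\in X_1$, $z'\in\hat f(\hat A)$ with $\hat T(x_1)+z-\hat T(x_1')-z'\in{\rm int}(C)$; applying $\Pi_Z$ and Lemma \ref{lem5.1}, this happens iff $z-z'\in{\rm int}_Z(C_Z)$ (the $\hat T(X_1)$-component is irrelevant because $\hat T(X_1)+{\rm int}(C)$ absorbs it — more precisely $\hat T(X_1)+({\rm int}(C)\cap(\hat T(X_1)\oplus Z))=\Pi_Z^{-1}({\rm int}_Z(C_Z))$), i.e.\ iff $z\notin{\rm WE}(\hat f(\hat A),C_Z)$. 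Hence ${\rm WE}(f(A),C)={\rm WE}(\hat T(X_1)+\hat f(\hat A),C)=\hat T(X_1)+{\rm WE}(\hat f(\hat A),C_Z)$, which is the first equality in (\ref{NW4.27}).

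For the second equality $S^w=X_1+\hat S^w$, I would translate the value-set statement back to feasible points. If $\bar x\in S^w$, write $\bar x=x_1+x_2$ with $x_1\in X_1$, $x_2\in\hat A$; then $f(\bar x)\in{\rm WE}(f(A),C)=\hat T(X_1)+{\rm WE}(\hat f(\hat A),C_Z)$, and by (\ref{NW4.25}) the $Z$-component of $f(\bar x)$ is $\hat f(x_2)$, so $\hat f(x_2)\in{\rm WE}(\hat f(\hat A),C_Z)$, i.e.\ $x_2\in\hat S^w$; this gives $S^w\subset X_1+\hat S^w$. Conversely if $x_2\in\hat S^w$ and $x_1\in X_1$, then $\hat f(x_2)\in{\rm WE}(\hat f(\hat A),C_Z)$, so $f(x_1+x_2)=\hat T(x_1)+f(x_2)\in\hat T(X_1)+\hat f(x_2)+\hat T(X_1)\subset\hat T(X_1)+{\rm WE}(\hat f(\hat A),C_Z)={\rm WE}(f(A),C)$ by the first equality, hence $x_1+x_2\in S^w$. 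The main obstacle — the step deserving the most care — is the ``absorption'' argument showing that adding the $\hat T(X_1)$-component does not change membership in the weak-efficiency relation; this is where Lemma \ref{lem5.1} and the openness of $\Pi_Z$ are essential, and one must be careful that the implication ``$\Pi_Z(w)\in{\rm int}_Z(C_Z)\Rightarrow$ there exists a representative in ${\rm int}(C)$'' is used in the right direction, namely to recover an interior-point of $C$ from an interior point of $C_Z$ after modifying the kernel component.
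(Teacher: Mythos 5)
Your proposal is correct and follows essentially the same route as the paper: part (i) via the observation that differences of points of $f(A)$ stay in the subspace $\hat T(X_1)\oplus Z$, and part (ii) via Lemma \ref{lem5.1} together with the absorption identity $\hat T(X_1)+(\hat T(X_1)\oplus Z)\cap{\rm int}(C)=\hat T(X_1)+{\rm int}_Z(C_Z)$, followed by the transfer from value sets to solution sets using $\hat T(X_1)\cap Z=\{0\}$. One small caution: the pointwise claim that $w\in{\rm int}(C)$ if and only if $\Pi_Z(w)\in{\rm int}_Z(C_Z)$ for $w\in\hat T(X_1)\oplus Z$ is not what Lemma \ref{lem5.1} says and is false in general (only the forward implication holds pointwise); your argument is nevertheless sound because, as your parenthetical and closing remark make explicit, the weak-efficiency test involves an existential quantifier over the $\hat T(X_1)$-component, so only the set-level identity is actually needed.
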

\begin{proof}
First suppose that $(\hat T(X_1)\oplus Z)\cap{\rm int}(C)=\emptyset$. Then, since $\hat T(X_1)\oplus Z$ is a linear subspace of $Y$, $(\hat T(X_1)\oplus Z)\cap((\hat T(X_1)\oplus Z)-{\rm int}(C))=\emptyset$. Noting that
\begin{equation}\label{f(A)}
f(A)=f(X_1+\hat A)=\hat T(X_1)+\hat f(\hat A)\subset \hat T(X_1)\oplus Z
\end{equation}
(thanks to (\ref{NW4.25})), it follows that  $f(A)\cap(f(A)-{\rm int}(C))=\emptyset$. This shows that  ${\rm WE}(f(A),C)=f(A)$ and $S^w=A$.
Next suppose that $(\hat T(X_1)\oplus Z)\cap{\rm int}(C)\not=\emptyset$. Then, by Lemma \ref{lem5.1}, ${\rm int}_Z(C_Z)=\Pi_Z(\hat T(X_1)\oplus Z)\cap{\rm int}(C))$.
Since $\Pi_Z$ is the projection from $\hat T(X_1)\oplus Z$ to $Z$, $\hat T(X_1)+E=\hat T(X_1)+\Pi_Z(E)$ for any set $E$ in $\hat T(X_1)\oplus Z$. Hence, by Lemma \ref{lem5.1},
\begin{eqnarray*}
\hat T(X_1)+(\hat T(X_1)\oplus Z)\cap{\rm int}(C)&=&\hat T(X_1)+\Pi_Z((\hat T(X_1)\oplus Z)\cap{\rm int}(C))\\
&=&\hat T(X_1)+{\rm int}_Z(C_Z).
\end{eqnarray*}
Noting that ${\rm WE}(\Omega,C)=\Omega\setminus (\Omega+{\rm int}(C))$ for any set in $Y$, it follows from (\ref{f(A)}) that
\begin{eqnarray*}
{\rm WE}(f(A),C)
&=&(\hat T(X_1)+\hat f(\hat A))\setminus(\hat T(X_1)+\hat f(\hat A)+{\rm int}(C))\\
&=&(\hat T(X_1)+\hat f(\hat A))\setminus(\hat T(X_1)+\hat f(\hat A)+(\hat T(X_1)\oplus Z)\cap{\rm int}(C))\\
&=&(\hat T(X_1)+\hat f(\hat A))\setminus(\hat T(X_1)+\hat f(\hat A)+{\rm int}_Z(C_Z)).
\end{eqnarray*}
Since $\hat f(\hat A)\subset Z$ and $\hat T(X_1)\cap Z=\{0\}$,
$$(\hat T(X_1)+\hat f(\hat A))\setminus(\hat T(X_1)+\hat f(\hat A)+{\rm int}_Z(C_Z))=\hat T(X_1)+\hat f(\hat A)\setminus(\hat f(\hat A)+{\rm int}_Z(C_Z)).$$
Hence
$$
{\rm WE}(f(A),C)=\hat T(X_1)+\hat f(\hat A)\setminus(\hat f(\hat A)+{\rm int}_Z(C_Z))=\hat T(X_1)+{\rm WE}(\hat f(\hat A),C_Z).
$$
This shows  the first equality of (\ref{NW4.27}). To prove the second equality of (\ref{NW4.27}), let $x_2\in \hat S^w$. Then $x_2\in\hat A$ and $\hat f(x_2)\in {\rm WE}(\hat f(\hat A),C_Z)$. Hence,
$$X_1+x_2\subset X_1+\hat A=A\;{\rm and}\; f(X_1+x_2)=\hat T(X_1)+\hat f(x_2)\subset {\rm WE}(f(A),C)$$
(thanks to (\ref{NW4.25}) and the first equality of (\ref{NW4.27})). It follows that $X_1+x_2\subset S^w$ and so $X_1+\hat S^w\subset S^w$. Conversely, let $x\in S^w$. Then there exists $(x_1,x_2)\in X_1\times \hat A$ such that $x=x_1+x_2$ and $f(x_1+x_2)\in {\rm WE}(f(A),C)=\hat T(X_1)+{\rm WE}(\hat f(\hat A),C_Z)$.
Noting that $f(x_1+x_2)\in f(X_1+x_2)=\hat T(X_1)+\hat f(x_2)$ and $\hat T(X_1)\cap Z=\{0\}$, one has  $\hat f(x_2)\in {\rm WE}(\hat f(\hat A),C_Z)$. Hence $x_2\in \hat S^w$ and $x=x_1+x_2\in X_1+\hat S^w$.  Hence the second equality of (\ref{NW4.27}) holds. The proof is complete.
\end{proof}

\begin{them}\label{thm5.2}
Let $(x_1,x_2)\in X_1\times \hat A$. Then $f(x_1+x_2)\in {\rm E}(f(A),C)$ if and only if
$\hat f(x_2)\in {\rm E}(\hat f(\hat A),C_Z)$ and $C_Z=C\cap(\hat T(X_1)\oplus Z)$.
\end{them}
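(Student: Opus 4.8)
The plan is to transport the Pareto comparison down to the finite-dimensional space $Z$ via the projection $\Pi_Z$, using two structural facts that are already available: by (\ref{NW4.25}) together with $A=X_1+\hat A$ one has $f(A)=\hat T(X_1)+\hat f(\hat A)\subseteq L$ and $\Pi_Z(f(x_1'+x_2'))=\hat f(x_2')$ for all $(x_1',x_2')\in X_1\times X_2$, where $L:=\hat T(X_1)\oplus Z$; and $f(A)$ is invariant under translation by $\hat T(X_1)$, i.e.\ $f(A)+\hat T(X_1)=f(A)$. The first step is to record that, since $f(A)-f(A)\subseteq L$, a point $\omega\in f(A)$ belongs to ${\rm E}(f(A),C)$ if and only if it belongs to ${\rm E}(f(A),C\cap L)$; so only the cone $K:=C\cap L$ is relevant.

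For the forward implication, assume $\omega:=f(x_1+x_2)\in{\rm E}(f(A),C)$. Translation invariance gives $\omega-d\in f(A)$ for every $d\in\hat T(X_1)$, and comparing $\omega$ with $\omega-d$ forces $d\notin C$ unless $d=0$; hence $\hat T(X_1)\cap C=\{0\}$. The crucial point --- and the step I expect to be the real obstacle --- is to strengthen this to $C\cap L\subseteq Z$, equivalently $C_Z=\Pi_Z(K)=K=C\cap(\hat T(X_1)\oplus Z)$; here one writes $f(A)=\hat T(X_1)+\bigcup\limits_{i=1}^m\hat V_i$ with each $\hat V_i$ a polyhedron in the finite-dimensional space $Z$ (so $\hat T(X_1)\subseteq{\rm rec}(f(A))$), and shows that a vector $d_0+z_0\in C$ with $d_0\in\hat T(X_1)\setminus\{0\}$ would allow one to strictly dominate $\omega$ inside $f(A)$, contradicting $\omega\in{\rm E}(f(A),C)$. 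Once $C_Z=K$ is known, the remaining claim $\hat f(x_2)\in{\rm E}(\hat f(\hat A),C_Z)$ is routine: if some $\hat f(x_2')\in\hat f(\hat A)\setminus\{\hat f(x_2)\}$ had $\hat f(x_2)-\hat f(x_2')\in C_Z\subseteq C$, then $v:=(\omega-\hat f(x_2))+\hat f(x_2')\in\hat T(X_1)+\hat f(\hat A)=f(A)$ would satisfy $v\neq\omega$ and $\omega-v=\hat f(x_2)-\hat f(x_2')\in C$, contradicting the efficiency of $\omega$ (note $\omega-\hat f(x_2)\in\hat T(X_1)$ because $\Pi_Z(\omega)=\hat f(x_2)$).

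For the converse, assume $C_Z=C\cap L$ and $\hat f(x_2)\in{\rm E}(\hat f(\hat A),C_Z)$, and suppose $\omega=f(x_1+x_2)$ is dominated by some $v\in f(A)\setminus\{\omega\}$, so $\omega-v\in C$. Since $\omega,v\in L$ we get $\omega-v\in C\cap L=C_Z\subseteq Z$, hence $\Pi_Z(\omega-v)=\omega-v$; writing $v=f(x_1'+x_2')$ with $(x_1',x_2')\in X_1\times\hat A$, this yields $\hat f(x_2)-\hat f(x_2')=\Pi_Z(\omega-v)=\omega-v\in C_Z$. If $\hat f(x_2')\neq\hat f(x_2)$ this contradicts $\hat f(x_2)\in{\rm E}(\hat f(\hat A),C_Z)$, while if $\hat f(x_2')=\hat f(x_2)$ then $\omega=v$, contradicting $v\neq\omega$. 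Hence $\omega\in{\rm E}(f(A),C)$, which completes the equivalence.

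In short, the converse and the ``efficiency transfer'' half of the forward implication are formal manipulations with $\Pi_Z$ and the decomposition $f(A)=\hat T(X_1)+\hat f(\hat A)$, much in the spirit of the proof of Theorem~\ref{thm5.1}. The one place where real work is needed is deducing the cone equality $C_Z=C\cap(\hat T(X_1)\oplus Z)$ from the existence of a single efficient point: translation invariance cheaply gives $\hat T(X_1)\cap C=\{0\}$, but upgrading this to $C\cap(\hat T(X_1)\oplus Z)\subseteq Z$ genuinely uses that the ``$Z$-part'' $\hat f(\hat A)$ of $f(A)$ is a finite union of polyhedra in the finite-dimensional space $Z$.
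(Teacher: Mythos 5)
Your converse direction and the ``efficiency transfer'' half of the forward direction are correct, and they amount to the same computation the paper performs: decompose $f(A)-f(x_1+x_2)=\hat T(X_1)+\big(\hat f(\hat A)-\hat f(x_2)\big)$ and intersect with $-C$, which only sees $C\cap L$ where $L:=\hat T(X_1)\oplus Z$. The problem is exactly the step you flagged as the real obstacle, namely deducing $C\cap L\subseteq Z$ from the existence of a single Pareto point. Your sketch proposes to dominate $\omega=f(x_1+x_2)$ by $\omega-t(d_0+z_0)$ for a hypothetical $d_0+z_0\in C\cap L$ with $d_0\in\hat T(X_1)\setminus\{0\}$. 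But $\omega-t(d_0+z_0)$ belongs to $f(A)=\hat T(X_1)+\hat f(\hat A)$ only if its $Z$-component $\hat f(x_2)-tz_0$ belongs to $\hat f(\hat A)$, and nothing guarantees this when $z_0\neq 0$: the translation invariance of $f(A)$ is only in the directions of $\hat T(X_1)$, not in the direction $z_0$. So your argument yields only the cheap conclusion $\hat T(X_1)\cap C=\{0\}$, and the gap is not closed.

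In fact this step cannot be repaired, because the ``only if'' implication is false as stated. Take $X=Y=\mathbb{R}^2$, $f(s,t)=(s,|t|)$ (so $P_1=\{t\geq0\}$, $P_2=\{t\leq0\}$, $T_1(s,t)=(s,t)$, $T_2(s,t)=(s,-t)$, $b_1=b_2=0$), constraints $\varphi_1(s,t)=t$ and $\varphi_2(s,t)=-t$, and $C=\{(a,b)\in\mathbb{R}^2:\,0\leq a\leq b\}$. The construction of Section 1 gives $X_1=\hat T(X_1)=\mathbb{R}\times\{0\}$, $X_2=Z=\{0\}\times\mathbb{R}$, $L=\mathbb{R}^2$, $A=\mathbb{R}\times\{0\}$, $\hat A=\{0\}$ and $f(A)=\mathbb{R}\times\{0\}$. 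Since $\big(f(A)-\omega\big)\cap(-C)=(\mathbb{R}\times\{0\})\cap(-C)=\{(0,0)\}$, every feasible point is a Pareto solution; yet $C_Z=\Pi_Z(C)=\{0\}\times\mathbb{R}_+$ while $C\cap L=C$, so $C_Z\neq C\cap L$. (This also contradicts part (i) of Corollary~\ref{coro5.1}.) Note that here $\hat T(X_1)\cap C=\{0\}$, confirming that your cheap necessary condition is the most one can extract.

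For comparison, the paper's own proof of Theorem~\ref{thm5.2} hides the same difficulty in the asserted identity $\big(f(A)-f(x_1+x_2)\big)\cap(-C)=\Pi_1\big(C\cap L\big)+\big(\hat f(\hat A)-\hat f(x_2)\big)\cap(-C_Z)$, of which only the inclusion $\subseteq$ (up to a sign on the first summand) is valid; in the example above the left-hand side is $\{0\}$ while the right-hand side is $\mathbb{R}_+\times\{0\}$. The valid inclusion is enough for the ``if'' direction, which both you and the paper prove correctly, but not for the ``only if'' direction. So your instinct about where the real work lies was exactly right; the answer is that no argument can do that work without additional hypotheses relating $C$ to $f(A)$ (for instance, the implication does hold when $\hat f(\hat A)+C_Z=Z$ or, more relevantly, when one can move from $\hat f(x_2)$ in the direction $-z_0$ inside $\hat f(\hat A)$).
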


\begin{proof}
By (\ref{NW4.25}),  $f(A)=f(X_1+\hat A)=\hat T(X_1)+\hat f(\hat A)$.
Hence
$$f(A)-f(x_1+x_2)=\hat T(X_1)+\hat f(\hat A)-\hat f(x_2).$$
Noting that $\hat f(\hat A)-\hat f(x_2)\subset Z$, it follows that
$$(f(A)-f(x_1+x_2))\cap -C=(\hat T(X_1)+\hat f(\hat A)-\hat f(x_2))\cap- (C\cap(\hat T(X_1)\oplus Z)).$$
Thus, from the definitions of the projection $\Pi_Z: \hat T(X_1)\oplus Z\rightarrow Z$ (see (\ref{Pi})), it is easy to verify that
$$(f(A)-f(x_1+x_2))\cap- C=\Pi_1(C\cap(\hat T(X_1)\oplus Z))+(\hat f(\hat A)-\hat f(x_2))\cap -C_Z,$$
where $\Pi_1(y+z)=y$ for all $(y,z)\in \hat T(X_1)\oplus Z$.
Therefore, $f(x_1+x_2)\in {\rm E}(f(A),C)$ is equivalent to
$$\Pi_1(C\cap(\hat T(X_1)\oplus Z))+(\hat f(\hat A)-\hat f(x_2))\cap -C_Z=\{0\}.$$
Since $\hat T(X_1)\cap Z=\{0\}$, it follows that $f(x_1+x_2)\in {\rm E}(f(A),C)$ if and only if
$$\Pi_1(C\cap(\hat T(X_1)\oplus Z))=(\hat f(\hat A)-\hat f(x_2))\cap-C_Z=\{0\},$$
namely $C_Z=C\cap(\hat T(X_1)\oplus Z)$ and $\hat f(x_2)\in E(\hat f(\hat A),C_Z)$. The proof is complete.
\end{proof}

The following corollary is a consequence of Theorem \ref{thm5.2}.

\begin{coro}\label{coro5.1}
Let $\hat S$ denote the Pareto solution set of piecewise linear problem ${\rm (\widehat{PLP})}$. The following statements hold:\\
(i) If $C_Z\not=C\cap(\hat T(X_1)\oplus Z)$ then $S=\emptyset$.\\
(ii) If $C_Z=C\cap(\hat T(X_1)\oplus Z)$ then
$$S=X_1+\hat S\;\;{\rm and}\;\;{\rm E}(f(A),C)=\hat T(X_1)+{\rm E}(\hat f(\hat A),C_Z).$$

{\rm {\bf Remark.}} By Corollary \ref{coro5.1}(i) and Theorem \ref{thm5.1}(i), piecewise linear problem \end{coro}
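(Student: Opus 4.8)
The plan is to reduce everything to Theorem~\ref{thm5.2}, together with the structural facts already recorded in this section: $A=X_1+\hat A$, $f(A)=\hat T(X_1)+\hat f(\hat A)$, and, since $X=X_1\oplus X_2$ and $\hat A\subset X_2$, the fact that every $x\in A$ has a \emph{unique} decomposition $x=x_1+x_2$ with $(x_1,x_2)\in X_1\times\hat A$, for which $f(x_1+x_2)=\hat T(x_1)+f(x_2)$ while $f(x_2)-\hat f(x_2)\in\hat T(X_1)$; in particular $f(x_1+x_2)\in\hat T(X_1)+\hat f(x_2)$ (see (\ref{NW4.25})). I shall also use that $\hat S=\{x_2\in\hat A:\ \hat f(x_2)\in{\rm E}(\hat f(\hat A),C_Z)\}$.

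For \textbf{(i)} I would argue by contradiction. If some $\bar x\in A$ were a Pareto solution of (PLP), write $\bar x=x_1+x_2$ with $(x_1,x_2)\in X_1\times\hat A$; then $f(x_1+x_2)=f(\bar x)\in{\rm E}(f(A),C)$, so the ``only if'' direction of Theorem~\ref{thm5.2} forces $C_Z=C\cap(\hat T(X_1)\oplus Z)$, contradicting the hypothesis of (i). Hence no feasible point is a Pareto solution, i.e.\ $S=\emptyset$.

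For \textbf{(ii)}, assume $C_Z=C\cap(\hat T(X_1)\oplus Z)$. To prove $S=X_1+\hat S$: if $x\in S$, decompose $x=x_1+x_2$ with $(x_1,x_2)\in X_1\times\hat A$; since $f(x)\in{\rm E}(f(A),C)$, Theorem~\ref{thm5.2} yields $\hat f(x_2)\in{\rm E}(\hat f(\hat A),C_Z)$, i.e.\ $x_2\in\hat S$, so $x\in X_1+\hat S$. Conversely, if $x=x_1+x_2$ with $x_1\in X_1$ and $x_2\in\hat S\subset\hat A$, then $x\in X_1+\hat A=A$, and since \emph{both} conditions in Theorem~\ref{thm5.2} now hold, $f(x)\in{\rm E}(f(A),C)$, i.e.\ $x\in S$. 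For the value-set identity, ``$\subseteq$'' is immediate: any $y\in{\rm E}(f(A),C)$ equals $f(x_1+x_2)$ for a Pareto solution $x_1+x_2$, so $\hat f(x_2)\in{\rm E}(\hat f(\hat A),C_Z)$ by Theorem~\ref{thm5.2} and $y\in\hat T(X_1)+\hat f(x_2)\subset\hat T(X_1)+{\rm E}(\hat f(\hat A),C_Z)$. For ``$\supseteq$'', take $y=\hat T(x_1)+\hat f(x_2)$ with $x_2\in\hat A$ and $\hat f(x_2)\in{\rm E}(\hat f(\hat A),C_Z)$; choosing $u\in X_1$ with $\hat T(u)=f(x_2)-\hat f(x_2)$ gives $f\big((x_1-u)+x_2\big)=\hat T(x_1)+\hat f(x_2)=y$, and Theorem~\ref{thm5.2} then shows $y\in{\rm E}(f(A),C)$.

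The routine part is the translation of membership in the solution and value sets through Theorem~\ref{thm5.2}; the one point that needs care is the ``$\supseteq$'' inclusion of the value-set identity, where an arbitrary element of $\hat T(X_1)+{\rm E}(\hat f(\hat A),C_Z)$ must be realized as an actual objective value $f(x)$ at a feasible $x$ — this is where the uniqueness of the $X_1\oplus X_2$ decomposition and the non-uniqueness of preimages of $f$ have to be combined, via the relation $f(x_2)-\hat f(x_2)\in\hat T(X_1)$.
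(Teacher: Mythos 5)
Your proposal is correct and follows exactly the route the paper intends: the paper gives no separate argument for Corollary~\ref{coro5.1} beyond declaring it ``a consequence of Theorem~\ref{thm5.2},'' and your write-up simply supplies the routine details of that deduction (using $A=X_1+\hat A$, the uniqueness of the $X_1\oplus X_2$ decomposition, and the relation $f(x_2)-\hat f(x_2)\in\hat T(X_1)$ to realize elements of $\hat T(X_1)+{\rm E}(\hat f(\hat A),C_Z)$ as actual objective values). Nothing is missing.
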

{\it (PLP) has no Pareto solution when $C_Z\not=C\cap(\hat T(X_1)\oplus Z)$, and the weak Pareto solution set of (PLP) is just the entire feasible set $A$ of (PLP) when $(\hat T(X_1)\oplus Z)\cap{\rm int}(C)=\emptyset$. Therefore, we only need to consider
the Pareto solution set  and the weak Pareto solution of (PLP) when  $C_Z=C\cap(\hat T(X_1)\oplus Z)$ and $(\hat T(X_1)\oplus Z)\cap{\rm int}(C)\not=\emptyset$, respectively.}

In the framework of finite dimensional spaces, for $i\in\overline{1m}$, we consider the following  linear subproblem
$$C_Z-\min \Pi_Z(T_ix+b_i)\;\;{\rm subject\;to}\;x\in \hat A_i.\leqno{{\rm (\widehat{LP})}_{i}},$$
where  $\hat A_i$ is as (\ref{hatA}).

By Theorem \ref{thm5.1} and Corollary \ref{coro5.1} (with linear problems (LP)$_i$ and ${\rm (\widehat{LP})}_i$ replacing respectively piecewise linear problems (PLP) and ${\rm (\widehat{PLP})}$), we have the following result (thanks to $A_i=X_1+\hat A_i$).

\begin{pro}\label{pro5.1}
For each $i\in\overline{1m}$, let $S_i$ (resp. $S_i^w$) and $\hat S_i$ (resp. $\hat S^w_i$) denote the Pareto solution sets (resp. weak Pareto solution sets) of linear problem (LP)$_i$ and ${\rm (\widehat{LP})}_i$, respectively. The following statements hold:\\
(i) $S_i=\emptyset$ if $C_Z\not=C\cap(\hat T(X_1)\oplus Z)$.\\
(ii) $S_i=X_1+\hat S_i$ if $C_Z=C\cap(\hat T(X_1)\oplus Z)$.\\
(iii) $S_i^w=A_i$ if $(\hat T(X_1)\oplus Z)\cap{\rm int}(C)=\emptyset$.\\
(iv) $S_i^w=X_1+\hat S_i^w$ if $(\hat T(X_1)\oplus Z)\cap{\rm int}(C)\not=\emptyset$.
\end{pro}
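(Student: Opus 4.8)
The plan is to deduce this proposition from Theorem 4.1 and Corollary 4.1 by observing that the linear subproblems (LP)$_i$ and $({\rm \widehat{LP}})_i$ are literally the piecewise linear problems (PLP) and $({\rm \widehat{PLP}})$ in the special case where there is only one ``piece'' $P_i$, the objective is the single affine map $x\mapsto T_ix+b_i$, and the feasible set is $A_i$. Thus the only thing that needs checking is that the data of (LP)$_i$ satisfy the hypotheses under which Theorem 4.1 and Corollary 4.1 were proved, namely (\ref{Poi}), (\ref{wp1'}) and (\ref{wp2}), and that the resulting decomposition objects — the spaces $X_1$, $X_2$, $Z$, the operator $\hat T$, the projection $\Pi_Z$ and the cone $C_Z$ — coincide with those already fixed for (PLP). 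This is where the subtlety lies, and I describe how I would resolve it.

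\textbf{Step 1 (Verify the hypotheses for each $i$).} First I would note that $P_i$ as in (\ref{Poi}) is a polyhedron with ${\rm int}(P_i)\neq\emptyset$ (this is part of (\ref{wp1'})), that trivially $X=\bigcup_{\{i\}}P_i$ for the one-element family $\{i\}$, and that the pairwise-disjointness condition in (\ref{wp1'}) is vacuous for a single index, so the analogue of (\ref{wp1'}) holds. Likewise (\ref{wp2}) restricted to the single piece $P_i$ gives $f|_{P_i}=T_i|_{P_i}+b_i$ and $\varphi_j|_{P_i}=x_{ij}^*|_{P_i}-c_{ij}$, which is exactly the data required of a fully piecewise linear problem with one linearity piece. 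Hence all the constructions of Section 4 apply to (LP)$_i$.

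\textbf{Step 2 (Identify the decomposition data).} The potential obstacle — and I expect this to be the only real point needing care — is that the space $X_1$ in (\ref{X1}), the subspace $X_2$ in (\ref{X2}), the space $Z$ in (\ref{Z}), the map $\hat T$ in (\ref{6.5}) and the cone $C_Z$ in (\ref{CZ0}) are all defined using the \emph{full} collection of functionals and operators $\{u^*_{i'k},x^*_{i'j}:i'\in\overline{1m}\}$ and $\{T_{i'},b_{i'}:i'\in\overline{1m}\}$, whereas running the Section 4 machinery on the isolated problem (LP)$_i$ would a priori produce the smaller space $X_1^{(i)}:=\bigcap_{j,k}\mathcal{N}(x_{ij}^*)\cap\mathcal{N}(u_{ik}^*)\supset X_1$ and correspondingly possibly different $Z^{(i)}$, $\hat T^{(i)}$, $C_Z^{(i)}$. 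To handle this I would invoke the assertion in the text just before Proposition 4.2: ``By Theorem \ref{thm5.1} and Corollary \ref{coro5.1} (with linear problems (LP)$_i$ and ${\rm (\widehat{LP})}_i$ replacing respectively piecewise linear problems (PLP) and ${\rm (\widehat{PLP})}$), we have the following result (thanks to $A_i=X_1+\hat A_i$).'' The key equality $A_i=X_1+\hat A_i$ holds because $X_1\subset\bigcap_k\mathcal{N}(u_{ik}^*)\cap\bigcap_j\mathcal{N}(x_{ij}^*)$, so by Proposition \ref{pro2.1} applied with the pair $(X_1,X_2)$ one gets $P_i=X_1+\hat P_i$ and then $A_i=X_1+\hat A_i$ directly from (\ref{wp2'}) and (\ref{hatA}). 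Since $X_1$ is contained in $X_f^{(i)}:={\rm lin}(P_i)$ (indeed $X_1\subset{\rm lin}(P_i)$ because $X_1\subset\bigcap_k\mathcal N(u_{ik}^*)={\rm lin}(P_i)$ by (\ref{L2.2})), Theorem \ref{thm3.0} guarantees $T_{i'}|_{X_1}$ is well defined and equals $\hat T|_{X_1}$, so $\hat T$ is the correct linear part on $X_1$ for every piece; moreover $Z$ and $C_Z$, being defined from the same ambient data, serve uniformly for all $i$, and $({\rm \widehat{LP}})_i$ as written is precisely the finite-dimensional reduction of (LP)$_i$ relative to this common decomposition.

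\textbf{Step 3 (Apply Theorem 4.1 and Corollary 4.1 verbatim).} With Step 2 in place, (LP)$_i$ together with its reduction $({\rm \widehat{LP}})_i$ satisfy all hypotheses of Theorem \ref{thm5.1} and Corollary \ref{coro5.1}. Applying Theorem \ref{thm5.1}(i) gives statement (iii): if $(\hat T(X_1)\oplus Z)\cap{\rm int}(C)=\emptyset$ then ${\rm WE}(T_i(A_i)+b_i,C)=T_i(A_i)+b_i$ and $S_i^w=A_i$. Applying Theorem \ref{thm5.1}(ii) gives statement (iv): $S_i^w=X_1+\hat S_i^w$. Applying Corollary \ref{coro5.1}(i) gives statement (i): $S_i=\emptyset$ when $C_Z\neq C\cap(\hat T(X_1)\oplus Z)$. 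Applying Corollary \ref{coro5.1}(ii) gives statement (ii): $S_i=X_1+\hat S_i$ when $C_Z=C\cap(\hat T(X_1)\oplus Z)$. Since the four clauses of the proposition are exactly these four specializations, the proof is complete. The only genuine work, as noted, is Step 2 — checking that the ``global'' decomposition $X=X_1\oplus X_2$, $Z$, $\hat T$, $C_Z$ is simultaneously a legitimate Section 4 decomposition for each individual linear subproblem, which follows from the inclusions $X_1\subset{\rm lin}(P_i)$ for every $i$ and the consistency relation (\ref{6.5}) furnished by Theorem \ref{thm3.0}.
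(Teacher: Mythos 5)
Your proposal is correct and follows exactly the route the paper takes: the paper proves Proposition \ref{pro5.1} only by the one-line remark that Theorem \ref{thm5.1} and Corollary \ref{coro5.1} apply with (LP)$_i$ and ${\rm (\widehat{LP})}_i$ in place of (PLP) and ${\rm (\widehat{PLP})}$, ``thanks to $A_i=X_1+\hat A_i$.'' Your Step 2, verifying that the global decomposition data $X_1$, $X_2$, $Z$, $\hat T$, $C_Z$ remain legitimate for each individual subproblem via $X_1\subset{\rm lin}(P_i)$ and (\ref{6.5}), is precisely the implicit content of that remark, made explicit.
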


The following theorem provides exact formulas for the weak Pareto solution set and weak Pareto optimal value set for  piecewise linear problem ${\rm (\widehat{PLP})}$.

\begin{them}\label{pro5.2}
For each $i\in \overline{1m}$, let
\begin{equation}\label{4.17}
\hat V^w_{i}:=\Pi_Z\left(T_i(\hat A_{i})+b_i\right)\setminus\left(\hat f(\hat A)+{\rm int}_Z(C_Z)\right)
\end{equation}
and
\begin{equation}\label{n4.18}
\breve{S}_i:=\hat A_i\cap (\Pi_Z\circ T_i)^{-1}\left(\hat V^w_i-\Pi_Z(b_i)\right).
\end{equation}
Suppose that $(\hat T(X_1)\oplus Z)\cap{\rm int}(C)\not=\emptyset$. Then the following statements hold:\\
(i) $\hat S^w=\bigcup\limits_{i\in\bar I}\breve{S}_i$ and ${\rm WE}\left(\hat f(\hat A),C_Z\right)=\bigcup\limits_{i\in\bar I}\hat V^w_i$, where $I:=\{i\in\overline{1m}:\;\hat A_i\not=\emptyset\}$.\\
(ii) If, in addition, the ordering cone $C$ in $Y$ is assumed to be polyhedral, then for each $i\in\bar I$ there exist finitely many polyhedra $\hat P_{i1},\cdots,\hat P_{iq_i}$ in $X_2$ and faces $\hat F_{i1},\cdots,\hat F_{iq_i}$ of $\hat A_i$ such that
$\breve{S}_i=\bigcup\limits_{j=1}^{q_i}\hat P_{ij}$
and $\hat P_{ij}\subset \hat F_{ij}\subset\hat S^w_i$ for all $j\in\overline{1q_i}$.
Consequently, $\hat S^w$ is the union of finitely many polyhedra in $X_2$, each one of which is contained in a weak Pareto face of some linear subproblem ${\rm (\widehat{LP})}_i$.
\end{them}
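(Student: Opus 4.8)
The overall strategy is to prove (i) by elementary set algebra and to reduce (ii) to finite-dimensional polyhedral geometry by expressing every set in sight through prime generator groups. For part (i), recall that $\hat A=\bigcup_{i\in\bar I}\hat A_i$ and that $\hat f(x_2)=\Pi_Z(T_ix_2+b_i)$ whenever $x_2\in\hat A_i\subset\hat P_i$ (by (\ref{wp2}), (\ref{6.5}) and the definition of $\hat f$), hence $\hat f(\hat A)=\bigcup_{i\in\bar I}\Pi_Z(T_i(\hat A_i)+b_i)$. Since ${\rm WE}(\Omega,C_Z)=\Omega\setminus(\Omega+{\rm int}_Z(C_Z))$ for every $\Omega\subset Z$, distributing this set difference over the union immediately gives ${\rm WE}(\hat f(\hat A),C_Z)=\bigcup_{i\in\bar I}\hat V_i^w$, with $\hat V_i^w$ as in (\ref{4.17}). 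For the solution set, if $x_2\in\hat S^w$ pick $i_0\in\bar I$ with $x_2\in\hat A_{i_0}$; then $\hat f(x_2)=\Pi_Z(T_{i_0}x_2+b_{i_0})\in\Pi_Z(T_{i_0}(\hat A_{i_0})+b_{i_0})$ while $\hat f(x_2)\notin\hat f(\hat A)+{\rm int}_Z(C_Z)$, so $\hat f(x_2)\in\hat V^w_{i_0}$ and $x_2\in\breve S_{i_0}$; conversely $\breve S_i\subset\hat S^w$ because $\hat V_i^w\subset{\rm WE}(\hat f(\hat A),C_Z)$. This yields $\hat S^w=\bigcup_{i\in\bar I}\breve S_i$.

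For part (ii), write $C=\{y\in Y:\langle y_k^*,y\rangle\le0,\ k\in\overline{1p}\}$. First I would check that $C_Z$ is a polyhedral cone with ${\rm int}_Z(C_Z)\neq\emptyset$: applying Proposition~\ref{pro2.1} to the polyhedron $(\hat T(X_1)\oplus Z)\cap C$ inside the normed space $\hat T(X_1)\oplus Z$ expresses it as $W_1+\hat Q$ with $W_1$ a closed subspace and $\hat Q$ a polyhedron in a finite-dimensional complement, whence $C_Z=\Pi_Z(W_1)+\Pi_Z(\hat Q)$ is a sum of polyhedra in the finite-dimensional space $Z$, hence polyhedral by Proposition~\ref{pro2.2}, while ${\rm int}_Z(C_Z)=\Pi_Z((\hat T(X_1)\oplus Z)\cap{\rm int}(C))\neq\emptyset$ by Lemma~\ref{lem5.1} and the standing hypothesis of (ii). Next, put $\Omega_{i'}:=\Pi_Z(T_{i'}(\hat A_{i'})+b_{i'})$, a polyhedron in $Z$ (an affine image of the polyhedron $\hat A_{i'}$); then $\Omega_{i'}+C_Z$ is a polyhedron with nonempty interior and, because $C_Z$ is a cone with nonempty interior, $\Omega_{i'}+{\rm int}_Z(C_Z)={\rm int}_Z(\Omega_{i'}+C_Z)$. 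Fixing a prime generator group $\{(w^*_{i't},s_{i't}):t\in\overline{1\sigma_{i'}}\}$ of $\Omega_{i'}+C_Z$, Corollary~\ref{coro2.1} gives ${\rm int}_Z(\Omega_{i'}+C_Z)=\{z\in Z:\langle w^*_{i't},z\rangle<s_{i't}\ \forall t\}$, so $Z\setminus(\Omega_{i'}+{\rm int}_Z(C_Z))=\bigcup_{t=1}^{\sigma_{i'}}\{z:\langle w^*_{i't},z\rangle\ge s_{i't}\}$. Since $\hat f(\hat A)+{\rm int}_Z(C_Z)=\bigcup_{i'\in\bar I}(\Omega_{i'}+{\rm int}_Z(C_Z))$, intersecting with $\Omega_i$ gives
$$\hat V_i^w=\bigcap_{i'\in\bar I}\bigcup_{t=1}^{\sigma_{i'}}\Big(\Omega_i\cap\{z\in Z:\langle w^*_{i't},z\rangle\ge s_{i't}\}\Big),$$
and distributing the intersection over the unions writes $\hat V_i^w$ as a finite union $\bigcup_{j=1}^{q_i}\hat P'_{ij}$ of polyhedra in $Z$ (discarding empty ones).

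It remains to pull back to $X_2$ and identify the faces. Fix a nonempty summand $\hat P'_{ij}$, arising from a choice $t(\cdot)$. Because $\Omega_i\subset\Omega_i+C_Z$ forces $\langle w^*_{it},z\rangle\le s_{it}$ for every $z\in\Omega_i$, the $i'=i$ factor of $\hat P'_{ij}$ equals $G_{ij}:=\Omega_i\cap\{z:\langle w^*_{it(i)},z\rangle=s_{it(i)}\}$, a nonempty exposed face of $\Omega_i$ containing $\hat P'_{ij}$; moreover the computation of the previous paragraph applied to the single piece $\hat A_i$ shows ${\rm WE}(\Omega_i,C_Z)=\bigcup_{t}\big(\Omega_i\cap\{z:\langle w^*_{it},z\rangle=s_{it}\}\big)$, so $G_{ij}\subset{\rm WE}(\Omega_i,C_Z)$. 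Let $L_i:=\Pi_Z\circ T_i|_{X_2}:X_2\to Z$. Since preimages of polyhedra under affine maps between finite-dimensional spaces are polyhedra and $\hat A_i$ is a polyhedron, $\hat P_{ij}:=\hat A_i\cap L_i^{-1}(\hat P'_{ij}-\Pi_Z(b_i))$ is a polyhedron in $X_2$, and $\breve S_i=\bigcup_j\hat P_{ij}$ by (\ref{n4.18}). Finally $\hat F_{ij}:=\hat A_i\cap L_i^{-1}(G_{ij}-\Pi_Z(b_i))=\{x_2\in\hat A_i:\langle L_i^*w^*_{it(i)},x_2\rangle=s_{it(i)}-\langle w^*_{it(i)},\Pi_Z(b_i)\rangle\}$ is an exposed face of $\hat A_i$ with $\hat P_{ij}\subset\hat F_{ij}$, and $\hat F_{ij}\subset\hat S^w_i$ because $\Pi_Z(T_ix_2+b_i)=L_ix_2+\Pi_Z(b_i)\in G_{ij}\subset{\rm WE}(\Omega_i,C_Z)$ for all $x_2\in\hat F_{ij}$; i.e. $\hat F_{ij}$ is a weak Pareto face of ${\rm (\widehat{LP})}_i$. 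Combining with part (i), $\hat S^w=\bigcup_{i\in\bar I}\breve S_i=\bigcup_{i\in\bar I}\bigcup_j\hat P_{ij}$ is a finite union of polyhedra in $X_2$, each contained in a weak Pareto face of some ${\rm (\widehat{LP})}_i$.

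The step I expect to be most delicate is the bookkeeping around $\hat V_i^w$: the set removed is $\hat f(\hat A)+{\rm int}_Z(C_Z)$, assembled from \emph{all} pieces $\Omega_{i'}$ rather than from $\Omega_i$ alone, which is precisely what produces the intersection over $i'\in\bar I$ in the display above and what makes the polyhedra $\hat P_{ij}$ genuinely smaller than the faces $\hat F_{ij}$ in general. Closely tied to it is the verification that $C_Z$ is polyhedral in the merely norm-complete (non-reflexive) setting, which is what licenses treating $\Omega_{i'}+C_Z$ as a polyhedron and using a prime generator group; the remaining ingredients — affine images and preimages of polyhedra in finite dimensions, the identity $\Omega+{\rm int}_Z(C_Z)={\rm int}_Z(\Omega+C_Z)$ for a cone with nonempty interior, and the exposed-face identifications via Corollary~\ref{coro2.1} — are routine finite-dimensional polyhedral geometry.
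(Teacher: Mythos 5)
Your proof is correct, and for most of its length it follows the same route as the paper: part (i) is the same set-algebra argument (distribute $\Omega\setminus(\hat f(\hat A)+{\rm int}_Z(C_Z))$ over $\hat f(\hat A)=\bigcup_{i}\Pi_Z(T_i(\hat A_i)+b_i)$ and match $\breve S_i$ with $\hat S^w\cap\hat A_i$), and part (ii) uses the same skeleton (polyhedrality of $C_Z$, the identity $\Omega_{i'}+{\rm int}_Z(C_Z)={\rm int}_Z(\Omega_{i'}+C_Z)$, a strict-inequality representation of that interior, complementation, distribution of the intersection over the unions, and pullback under $\Pi_Z\circ T_i$). Where you genuinely diverge is the final face-containment step. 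The paper gets the face abstractly: it applies Theorem ABB to ${\rm (\widehat{LP})}_i$ to write $\hat S^w_i$ as a finite union of faces of $\hat A_i$, observes $\hat P^i_{(k_1,\dots,k_n)}\subset\breve S_i\subset\hat S^w_i$, and then invokes Proposition \ref{pro2.5} (a convex set covered by finitely many exposed faces lies in one of them) to place each polyhedral piece inside a single face. You instead construct the face explicitly: since $\Omega_i\subset\Omega_i+C_Z$, the $i'=i$ factor of each piece of $\hat V^w_i$ collapses to the exposed face $G_{ij}=\Omega_i\cap\{z:\langle w^*_{it(i)},z\rangle=s_{it(i)}\}$ of $\Omega_i$, which lies in ${\rm WE}(\Omega_i,C_Z)$, and its preimage $\hat F_{ij}$ is an exposed face of $\hat A_i$ contained in $\hat S^w_i$. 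Your version is more self-contained (it needs neither Theorem ABB nor Proposition \ref{pro2.5} for this step) and yields an explicit supporting functional $L_i^*w^*_{it(i)}$ for the face, at the cost of a little care with degenerate cases (e.g.\ when that functional vanishes on $X_2$, in which case $\hat F_{ij}=\hat A_i$ is the improper face and the argument still goes through); the paper's version is shorter because Theorem ABB is already on the shelf and delivers the weak Pareto faces directly.
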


\begin{proof}
Let $i$ be an arbitrary element in $\bar I$. Since $\hat f(\hat x)=\Pi_Z(T_i(\hat x))+\Pi_Z(b_i)$ for all $\hat x\in\hat A_i$,
$$\hat A_i\cap(\Pi_Z\circ T_i)^{-1}(\hat V^w_i-\Pi_Z(b_i))=\hat A_i\cap\hat f^{-1}(\hat V^w_i).$$
Hence, by (\ref{4.17}) and (\ref{n4.18}),
\begin{equation}\label{C1}
\breve S_i=\hat A_i\cap \hat f^{-1}(\hat V^w_i)\;\;\;{\rm and}\;\;\;\hat V^w_i=\hat f(\breve S_i).
\end{equation}
Thus, to prove (i),
it suffices to show that $\breve S_i=\hat S^w\cap \hat A_i$
(because $\hat A=\bigcup\limits_{i\in\bar I}\hat A_i$ and $\hat f(\hat S^w)={\rm WE}(\hat f(\hat A),C_Z)$).
To do this, let $\hat a_i\in\hat S^w\cap\hat A_i$. Then $\hat f(\hat a_i)\in {\rm WE}(\hat f(\hat A),C_Z)$, that is, $\hat f(\hat a_i)\not\in \hat f(\hat A)+{\rm int}_Z(C_Z)$. Since
$$\hat f(\hat a_i)=\Pi_Z(T_i(\hat a_i)+b_i)\in \Pi_Z(T_i(\hat A_{i})+b_i),$$
this and (\ref{4.17}) imply that $\hat f(\hat a_i)\in\hat V^w_i$. Hence $\hat a_i\in\breve S_i$ (thanks to  (\ref{C1})).
This shows that $\hat  S^w\cap\hat A_i \subset \breve S_i$.
Conversely, let $\hat a_i\in \breve S_i$. Then, by (\ref{n4.18}), $\hat a_i\in\hat A_i$, $\Pi_Z(T_i(\hat a_i))\in \hat V^w_i-\Pi_Z(b_i)$ and so  $\hat f(\hat a_i)\in\hat  V^w_i$. Since $\hat f(\hat a_i)\in\hat f(\hat A_i)=\Pi_Z\big(T_i(\hat A_{i})+b_i\big)$, $\hat f(\hat a_i)\not\in \hat f(\hat A)+{\rm int}_Z(C_Z)$ (thanks to (\ref{4.17})). Noting that $\hat A_i\subset\hat A$, it follows that
$$\hat f(\hat a_i)\in \hat f(\hat A)\setminus\big(\hat f(\hat A)+{\rm int}_Z(C_Z)\big)={\rm WE}(\hat f(\hat A),C_Z),$$
and so
$\hat a_i\in \hat A_i\cap \hat f^{-1}({\rm WE}(\hat f(\hat A),C_Z))=\hat A_i\cap\hat  S^w$. This shows that $\breve S_i\subset \hat A_i\cap\hat  S^w$. Therefore, $\breve S_i=\hat A_i\cap\hat  S^w$. The proof of (i) is complete.

To prove (ii), suppose that the ordering cone $C$ is polyhedral. Then, since the projection mapping $\Pi_Z:\hat T(X_1)\oplus Z\rightarrow Z$ is a linear operator and since $Z$ is finite dimensional, $C_Z=\Pi_Z((\hat T(X_1)\oplus Z)\cap C)$ is a polyhedral cone in $Z$ (thanks to \cite[Theorem 19.3]{Ro} and Proposition 2.1). On the other hand, by the assumption that $(\hat T(X_1)\oplus Z)\cap {\rm int}(C)\not=\emptyset$, Lemma \ref{lem5.1} implies that
$${\rm int}_Z(C_Z)=\Pi_Z((\hat T(X_1)\oplus Z)\cap {\rm int}(C))\not=\emptyset.$$
Since $\Pi_Z(T_j(\hat A_j)+b_j)$ and $C_Z$ are polyhedra in the finite dimensional space $Z$, their sum $\Pi_Z(T_j(\hat A_j)+b_j)+C_Z$ is a polyhedron in $Z$ and so is closed. Hence
$$\Pi_Z(T_j(\hat A_j)+b_j)+C_Z={\rm cl}\big(\Pi_Z(T_j(\hat A_j)+b_j)+{\rm int}_Z(C_Z)\big).$$
Noting that $\Pi_Z(T_j(\hat A_j)+b_j)+{\rm int}_Z(C_Z)$ is open in $Z$,  it follows  that
$$
{\rm int}_Z\big(\Pi_Z(T_j(\hat A_j)+b_j)+C_Z\big)=\Pi_Z(T_j(\hat A_j)+b_j)+{\rm int}_Z(C_Z).
$$
Thus, by Proposition \ref{pro2.1}, there exist $(z_{j1}^*,r_{j1}),\cdots,(z_{jq_j}^*,r_{jq_j})$ in $Z^*\times\mathbb{R}$ such that
\begin{equation}\label{5.12}
\Pi_Z(T_j(\hat A_j)+b_j)+{\rm int}_Z(C_Z)=\{z\in Z:\;\langle z_{jk}^*,z\rangle<r_{jk},\;k=1\cdots,q_j\}.
\end{equation}
Since $\hat A=\bigcup\limits_{j\in\bar I}\hat A_j$, it follows from (\ref{4.17}) that
\begin{eqnarray*}
\hat V^w_i&=&\Pi_Z(T_i(\hat A_i)+b_i)\setminus\left(\bigcup\limits_{j\in\bar I}(\hat f(\hat A_j)+{\rm int}_Z(C_Z)\right)\\
&=&\Pi_Z(T_i(\hat A_i)+b_i)\setminus\left(\bigcup\limits_{j\in\bar I}(\Pi_Z(T_j(\hat A_j)+b_j)+{\rm int}_Z(C_Z)\right)\\
&=&\Pi_Z(T_i(\hat A_i)+b_i)\setminus\left(\bigcup\limits_{j\in\bar I}\bigcap\limits_{k=1}^{q_j}\{z\in Z:\;\langle z_{jk}^*,z\rangle<r_{jk}\}\right)\\
&=&\bigcap\limits_{j\in\bar I}\bigcup\limits_{k=1}^{q_j}\left(\Pi_Z(T_i(\hat A_i)+b_i)\setminus\{z\in Z:\;\langle z_{jk}^*,z\rangle<r_{jk}\}\right)\\
&=&\bigcap\limits_{j\in\bar I}\bigcup\limits_{k=1}^{q_j}\left(\Pi_Z(T_i(\hat A_i)+b_i)\cap\{z\in Z:\;\langle z_{jk}^*,z\rangle\geq r_{jk}\}\right).
\end{eqnarray*}
Since $\bar I$ is a subset of $\overline{1m}$, we assume without loss of generality that there exists $n\in\overline{1m}$ such that $\bar I=\overline{1n}$. For any $(k_1,\cdots,k_n)\in \overline{1q_1}\times\cdots\times \overline{1q_n}$, let
$$Q^i_{(k_1,\cdots,k_n)}:=\bigcap\limits_{j=1}^n\left(\Pi_Z(T_i(\hat A_i)+b_i)\cap\{z\in Z:\;\langle z_{jk_j}^*,z\rangle\geq r_{jk_j}\}\right).$$
Then, each $Q^i_{(k_1,\cdots,k_n)}$ is a polyhedron in $Z$ and
\begin{equation}\label{5.13}
\hat V^w_i=\bigcup\limits_{(k_1,\cdots,k_n)\in\Pi_i}Q^i_{(k_1,\cdots,k_n)},
\end{equation}
where $\Pi_i:=\big\{(k_1,\cdots,k_n)\in \overline{1q_1}\times\cdots\times \overline{1q_n}:\; Q^i_{(k_1,\cdots,k_n)}\not=\emptyset\big\}$.
Let
$$\hat P^i_{(k_1,\cdots,k_n)}:=\hat A_i\cap (\Pi_Z\circ T_i)^{-1}(Q^i_{(k_1,\cdots,k_n)}-\Pi_Z(b_i))\quad\forall (k_1,\cdots,k_n)\in\Pi_i.$$
Then each $\hat P^i_{(k_1,\cdots,k_n)}$ is a polyhedron in the finite dimensional space $X_2$ and
\begin{equation}\label{ii5.17}
\breve S_i=\hat A_i\cap(\Pi_Z\circ T_i)^{-1}(\hat V^w_i-\Pi_Z(b_i))=\bigcup\limits_{(k_1,\cdots,k_n)\in\Pi_i}\hat P^i_{(k_1,\cdots,k_n)}.
\end{equation}
Thus, to prove (ii), it suffices to show  that for each $(k_1,\cdots,k_n)\in\Pi_i$ there exists a face $\hat F$ of $\hat A_i$ such that
$\hat P^i_{(k_1,\cdots,k_n)}\subset\hat F\subset \hat S^w_i$. By
Theorem ABB (applied to linear problem ${\rm (\widehat{LP})}_i$), there exist finitely many faces $\hat F_{i1}\cdots, \hat F_{i\nu_i}$ of $\hat A_i$ such that $\hat S^w_i=\bigcup\limits_{j=1}^{\nu_i}\hat F_{ij}$. Noting that each $\hat P^i_{(k_1,\cdots,k_n)}$ is contained in $\hat S^w_i$ (thanks to (i) and (\ref{ii5.17})), it follows from Proposition \ref{pro2.5} that $\hat P^i_{(k_1,\cdots,k_n)}\subset\hat F_{ij'}$ for some $j'\in\overline{1\nu_i}$. The proof is complete.
\end{proof}

Formulas (i) and (ii) in Step 3 of the procedure provided in Section 1 are immediate from Theorems \ref{thm5.1} and \ref{pro5.2}. \\

To establish formulas for the Pareto solution set and Pareto optimal value set,  we need the following lemma, which is a  variant of a formula appearing in the proof of \cite[Theorem 3.4]{Zh}.
\begin{lem}\label{lem5.2}
Let $B_1,\cdots,B_m$ be subsets of $Y$. Then
$${\rm E}\left(\bigcup\limits_{i\in\overline{1m}}B_i,C\right)=\bigcup\limits_{i\in\overline{1m}}\bigcap\limits_{j\in\overline{1m}}\big({\rm E}(B_i,C)\setminus((B_j+C)\setminus  {\rm E}(B_j,C))\big).$$
\end{lem}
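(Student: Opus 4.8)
The plan is to prove the set equality by double inclusion, unwinding the definition of the efficient set $\mathrm{E}(\cdot,C)$ directly. Recall that for a set $B\subset Y$ and $y\in B$, we have $y\in\mathrm{E}(B,C)$ precisely when there is no $v\in B\setminus\{y\}$ with $v\le_C y$, equivalently $(B-y)\cap(-C)\subseteq\{0\}$, equivalently: for every $v\in B$, if $v-y\in -C$ then $v=y$. Set $B:=\bigcup_{i\in\overline{1m}}B_i$. The right-hand side is a union over $i$ of an intersection over $j$ of the set $\mathrm{E}(B_i,C)\setminus\big((B_j+C)\setminus\mathrm{E}(B_j,C)\big)$; the key observation driving everything is that $(B_j+C)\setminus\mathrm{E}(B_j,C)$ is exactly the set of points of $Y$ that are ``strictly dominated within $B_j$'' in the sense that there is some $b\in B_j$ with $b\le_C y$ and $b\ne y$. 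Indeed, $y\in B_j+C$ means $y=b+c$ for some $b\in B_j$, $c\in C$, i.e. $b\le_C y$; and subtracting off $\mathrm{E}(B_j,C)$ removes the case where no such $b$ is distinct from $y$. Making this elementary equivalence precise (and handling the boundary case $y\notin B_j+C$, where $y$ vacuously fails to be dominated by any point of $B_j$) is the first step.

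For the inclusion $\subseteq$, let $y\in\mathrm{E}(B,C)$. Then $y\in B_i$ for some $i$, and since $B_i\subseteq B$, no point of $B_i$ distinct from $y$ dominates $y$, so $y\in\mathrm{E}(B_i,C)$. Now fix any $j\in\overline{1m}$: I must show $y\notin(B_j+C)\setminus\mathrm{E}(B_j,C)$. If $y\notin B_j+C$ this is immediate. If $y\in B_j+C$, write $y=b+c$ with $b\in B_j\subseteq B$, $c\in C$; since $y\in\mathrm{E}(B,C)$ and $b\le_C y$, we must have $b=y$, whence $c=0$ and in fact no point of $B_j$ distinct from $y$ dominates $y$ (any such would also dominate $y$ in $B$), so $y\in\mathrm{E}(B_j,C)$ and thus $y\notin(B_j+C)\setminus\mathrm{E}(B_j,C)$. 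This holds for all $j$, so $y$ lies in the $i$-th term of the right-hand union.

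For the reverse inclusion $\supseteq$, suppose $y$ belongs to the $i$-th term: $y\in\mathrm{E}(B_i,C)$ and for every $j$, $y\notin(B_j+C)\setminus\mathrm{E}(B_j,C)$. I claim $y\in\mathrm{E}(B,C)$. Certainly $y\in B_i\subseteq B$. Suppose for contradiction some $v\in B\setminus\{y\}$ satisfies $v\le_C y$. Then $v\in B_j$ for some $j$, so $y=v+(y-v)\in B_j+C$; by hypothesis $y\notin(B_j+C)\setminus\mathrm{E}(B_j,C)$, hence $y\in\mathrm{E}(B_j,C)$. But $v\in B_j$, $v\ne y$, and $v\le_C y$ contradicts $y\in\mathrm{E}(B_j,C)$. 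Therefore no such $v$ exists and $y\in\mathrm{E}(B,C)$.

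The argument is entirely set-theoretic and uses nothing about polyhedra or linearity; the only mild subtlety — the main thing to be careful about — is the correct reading of $(B_j+C)\setminus\mathrm{E}(B_j,C)$ as ``points strictly dominated from inside $B_j$,'' including getting the vacuous cases $y\notin B_j$ and $y\notin B_j+C$ right, and using that $0\in C$ so that $y\in B_j$ implies $y\in B_j+C$. Once that equivalence is stated cleanly, both inclusions are short. I expect no real obstacle; the proof is a direct verification, and the reference to \cite[Theorem 3.4]{Zh} is only for context.
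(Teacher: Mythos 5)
Your proof is correct and follows essentially the same route as the paper's: both directions rest on the observation that $(B_j+C)\cap\mathrm{E}(B,C)\subset\mathrm{E}(B_j,C)$ for the forward inclusion, and on the fact that any dominating point $v\in B_j$ forces $y\in B_j+C$ and hence $y\in\mathrm{E}(B_j,C)$ for the reverse. The only difference is presentational (you argue the reverse inclusion by contradiction where the paper shows directly that $B\cap(y-C)=\{y\}$), so nothing further is needed.
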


\begin{proof}
Let $B:=\bigcup\limits_{i\in\overline{1m}}B_i$ and
$E_i:=\bigcap\limits_{j\in\overline{1m}}\big({\rm E}(B_i,C)\setminus((B_j+C)\setminus  {\rm E}(B_j,C))\big)$ for all $i\in \overline{1m}$.
We need to show ${\rm E}(B,C)=\bigcup\limits_{i=1}^mE_i$.
For each $y'\in {\rm E}(B,C)$, there exists $i'\in\overline{1m}$ such that $y'\in B_{i'}$ and so  $y'\in E(B_{i'},C)$. Since $(B_j+C)\cap {\rm E}(B,C)\subset{\rm E}(B_j,C)$ for all $j\in\overline{1m}$,
$y'\in {\rm E}(B_j,C)\;{\rm for\;all}\; j\in\overline{1m}\;{\rm with}\;y'\in B_j+C$.
It follows that $y'\not\in (B_j+C)\setminus {\rm E}(B_j,C)$ for all $j\in \overline{1m}.$
Hence $y'\in E(B_{i'},C)\setminus((B_j+C)\setminus {\rm E}(B_j,C)))$ for all $j\in \overline{1m}$, that is, $y'\in E_{i'}$. This shows that ${\rm E}(B,C)\subset\bigcup\limits_{i\in\overline{1m}}E_i$.
Conversely,  let $y\in
\bigcup\limits_{i=1}^mE_i$. Then there exists $i_0\in\overline{1m}$ such that $y\in E_{i_0}$. Let $z\in B\cap(y-C)$. We only need to show $z=y$. Take  $j\in \overline{1m}$ such that $z\in B_j$. It follows that
$z\in B_j\cap (y-C)$.
Noting that $E_{i_0}\subset{\rm E}(B_{i_0},C)$, it
is clear that $z=y$ if $j=i_0$. Now suppose that $j\not=i_0$. By the
definition of $E_{i_0}$, one has $y\in E(B_{i_0},C)\setminus((B_j+C)\setminus{\rm E}(B_j,C))$, and so $y\not \in (B_j+C)\setminus{\rm E}(B_j,C)$.
Since $y\in z+C\subset B_j+C$, $y\in {\rm E}(B_j,C)$. Hence $\{y\}=B_j\cap (y-C)\ni z$. This shows that $y=z$. The proof is complete.
\end{proof}

\begin{pro}\label{pro5.3}
Let $\hat S$ and $\hat S_i$ ($i\in\bar I:=\{i\in\overline{1m}:\,\hat A_i\not=\emptyset\}$) denote the Pareto solution set of piecewise linear problem ${\rm (\widehat{PLP})}$ and linear subproblem ${\rm (\widehat{LP})}_i$, respectively. Suppose that the ordering cone $C$ is polyhedral. Then there exist finitely many generalized polyhedra $\hat F_1,\cdots,\hat F_p$ in $X_2$ such that the following statements hold:\\
(i) $\hat S=\bigcup\limits_{k=1}^p\hat F_k$.\\
(ii) For each $k\in \overline{1p}$ there exist $i\in\bar I$ and a face $\hat F$ of $\hat A_i$ such that $\hat F_k\subset \hat F\subset \hat S_i$.
\end{pro}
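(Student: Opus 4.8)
The plan is to mimic the structure of the proof of Theorem~\ref{pro5.2}, but replace the role of the weak Pareto value set $\Pi_Z(T_i(\hat A_i)+b_i)\setminus(\hat f(\hat A)+{\rm int}_Z(C_Z))$ by the Pareto value set description furnished by Lemma~\ref{lem5.2}. Concretely, set $B_i:=\hat f(\hat A_i)=\Pi_Z(T_i(\hat A_i)+b_i)$ for $i\in\bar I$, so that $\hat f(\hat A)=\bigcup_{i\in\bar I}B_i$, and apply Lemma~\ref{lem5.2} to obtain
$$
{\rm E}(\hat f(\hat A),C_Z)=\bigcup_{i\in\bar I}\bigcap_{j\in\bar I}\big({\rm E}(B_i,C_Z)\setminus((B_j+C_Z)\setminus{\rm E}(B_j,C_Z))\big).
$$
Since $C_Z$ is polyhedral (by Proposition~\ref{pro2.1} and \cite[Theorem 19.3]{Ro}, as already argued inside the proof of Theorem~\ref{pro5.2}) and each $B_i$ is a polyhedron in the finite dimensional space $Z$, the set $B_j+C_Z$ is a polyhedron; hence $B_j+C_Z$, ${\rm E}(B_j,C_Z)$ and their set-difference are all finite unions of generalized polyhedra, and so is ${\rm E}(B_i,C_Z)$. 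Using Proposition~\ref{pro2.2} (closure of the class of generalized polyhedra under finite intersections) together with the elementary fact that the set-difference of a generalized polyhedron and a polyhedron is a finite union of generalized polyhedra, the displayed formula exhibits ${\rm E}(\hat f(\hat A),C_Z)$ as a finite union, say $\bigcup_{k=1}^{p}G_k$, of generalized polyhedra in $Z$, each $G_k$ being contained in some ${\rm E}(B_{i(k)},C_Z)$.

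Next I would pull this back to $X_2$. For each $k$ let $i:=i(k)\in\bar I$ and set
$$
\hat F_k:=\hat A_i\cap(\Pi_Z\circ T_i)^{-1}\big(G_k-\Pi_Z(b_i)\big).
$$
Since $\hat f|_{\hat A_i}=\Pi_Z(T_i(\cdot))+\Pi_Z(b_i)$, this is exactly $\hat A_i\cap\hat f^{-1}(G_k)$; the preimage of a generalized polyhedron under the affine map $x_2\mapsto\Pi_Z(T_i x_2)+\Pi_Z(b_i)$ is a generalized polyhedron, and intersecting with the polyhedron $\hat A_i$ keeps it a generalized polyhedron (Proposition~\ref{pro2.2}). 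Because $\hat A=\bigcup_{i\in\bar I}\hat A_i$ and $\hat S=\hat f^{-1}\big({\rm E}(\hat f(\hat A),C_Z)\big)$, a routine check (the same bookkeeping as in the proof of Theorem~\ref{pro5.2}(i), using $G_k\subset{\rm E}(B_{i(k)},C_Z)=\hat f(\hat S_{i(k)})$ and $\hat S_i=\hat A_i\cap\hat f^{-1}({\rm E}(\hat f(\hat A_i),C_Z))$) gives $\hat S=\bigcup_{k=1}^p\hat F_k$ with $\hat F_k\subset\hat S_{i(k)}$. This proves (i).

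For (ii) I would argue exactly as in the last paragraph of the proof of Theorem~\ref{pro5.2}: apply Theorem~ABB to the linear subproblem ${\rm (\widehat{LP})}_i$ (with $i=i(k)$) to write its Pareto solution set as a finite union of faces $\hat F_{i1},\dots,\hat F_{i\nu_i}$ of $\hat A_i$; then since $\hat F_k$ is a \emph{convex} set contained in $\hat S_i=\bigcup_{j}\hat F_{ij}$, Proposition~\ref{pro2.5} yields a single face $\hat F_{ij'}$ with $\hat F_k\subset\hat F_{ij'}\subset\hat S_i$, which is the desired face $\hat F$. The main obstacle, and the only place where genuine care is needed, is the first step: verifying cleanly that the class of finite unions of generalized polyhedra in a finite dimensional space is stable under the three operations that appear in Lemma~\ref{lem5.2}'s formula—addition of the polyhedral cone $C_Z$, passage to the Pareto efficient subset ${\rm E}(\cdot,C_Z)$, and set-difference—and that ${\rm E}(B_i,C_Z)$ really is such a finite union when $B_i$ is a polyhedron. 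Once those stability facts are in hand (they follow from Proposition~\ref{pro2.2}, the polyhedrality of $C_Z$, and the standard finite-dimensional structure of Pareto sets of polyhedra via Theorem~ABB applied face-by-face), the rest is bookkeeping identical in spirit to the proof of Theorem~\ref{pro5.2}.
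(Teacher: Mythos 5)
Your proposal is correct and follows essentially the same route as the paper: both use Lemma \ref{lem5.2} together with the polyhedrality of $C_Z$ and Theorem ABB to write ${\rm E}(\hat f(\hat A),C_Z)$ as a finite union of generalized polyhedra, pull this back through the affine maps $x_2\mapsto\Pi_Z(T_ix_2+b_i)$ intersected with $\hat A_i$, and finish with Theorem ABB plus Proposition \ref{pro2.5} to locate each piece inside a single Pareto face. The only (immaterial) difference is bookkeeping: the paper pulls back the whole value set for each $i$ via $\tilde S_i=\hat A_i\cap\hat S$, while you pull back each generalized-polyhedral piece $G_k$ only through its associated index $i(k)$.
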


\begin{proof}
For each $i\in\bar I$, let $\tilde S_i:=\hat A_i\cap \hat S$. Then $\hat S=\bigcup\limits_{i\in\bar I}\tilde S_i$, and  $\tilde S_i$ is clearly contained in the Pareto solution set $\hat S_i$ of linear subproblem ${\rm (\widehat{LP})}_i$. Thus, by Theorem ABB and Proposition 2.4, it suffices to show that there exist finitely many generalized polyhedra  $\hat G_{i1},\cdots,\hat G_{i\nu_i}$ in $X_2$ such that $\tilde S_i=\bigcup\limits_{k=1}^{\nu_i}\hat G_{ik}$.  Noting that $\hat f|_{\hat A_i}=\Pi_Z\circ f|_{\hat A_i}=\Pi_Z\circ T_i|_{\hat A_i}+\Pi_Z(b_i)$,
one has
\begin{equation}\label{5.18}
\;\;\tilde S_i=\hat A_i\cap \hat f^{-1}({\rm E}(\hat f(\hat A),C_Z))= \hat A_i\cap (\Pi_Z\circ T_i)^{-1}({\rm E}(\hat f(\hat A),C_Z)-\Pi_Z(b_i)).
\end{equation}
Since $C$ is a polyhedral cone in $Y$, $C\cap (\hat T(X_1)\oplus Z)$ is a polyhedral cone in $\hat T(X_1)\oplus Z$. Hence $C_Z=\Pi_Z(C\cap (\hat T(X_1)\oplus Z))$ is a polyhedral cone in the finite dimensional space $Z$. It follows  that $B_j+C_Z$ is a polyhedron in $Z$ and ${\rm E}(B_j,C_Z)={\rm E}(B_j+C_Z,C_Z)$ is the union of finitely many polyhedra in $Z$ for  each $j\in\bar I$ (thanks to Theorem ABB), where $B_j:=\Pi_Z(T_j(\hat A_j)+b_j)$. Hence $E_i:=\bigcap\limits_{j\in \bar I}{\rm E}(B_i,C_Z)\setminus\big(B_j+C_Z)\setminus{\rm E}(B_j,C_Z)\big)$
is the union of finitely many generalized polehedra in $Z$ for all $i\in \bar I$. Since
$$\hat f(\hat A)=\bigcup\limits_{i\in\bar I}\hat f(\hat A_i)=\bigcup\limits_{i\in\bar I}B_i,$$
This and Lemma \ref{lem5.2} imply that ${\rm E}(\hat f(\hat A),C_Z)=\bigcup\limits_{i\in\bar I}E_i$ and so  ${\rm E}(\hat f(\hat A),C_Z)$ is the union of finitely many generalized polyhedra in $Z$. Thus, by (\ref{5.18}), for each $i\in\bar I$ there exist finitely many generalized polyhedra $\hat G_{i1},\cdots,\hat G_{i\nu_i}$ in $X_2$ such that $\tilde S_i=\bigcup\limits_{k=1}^{\nu_i}\hat G_{ik}$.  The proof is complete.
\end{proof}

Based on Corollary \ref{coro5.1} and Proposition \ref{pro5.3} (and its proof), we can establish exact formulas for the Pareto solution set and Pareto optimal value set of  fully piecewise linear vector optimization problem (PLP).

The following corollary establishes the structure of the weak Pareto solution set and Pareto solution set for (PLP).

\begin{coro}\label{cor4.2}
Let $S^w$ and $S$ be the weak Pareto solution set and Pareto solution set of fully  piecewise linear vector optimization problem (PLP), respectively.
Suppose that the ordering cone $C$ is polyhedral. Then the following statements hold:\\
(i) There exist finitely many polyhedra $F_1,\cdots,F_p$ in $X$ such that  $S^w=\bigcup\limits_{k=1}^pF_k$ and  each $F_k$  is contained in a weak Pareto face of some linear subproblem (LP)$_{i}$.\\
(ii) There exist finitely many generalized polyhedra $F_1,\cdots,F_p$ in $X$ such that  $S=\bigcup\limits_{k=1}^pF_k$ and  $F_k$  is contained in a Pareto face of some linear subproblem (LP)$_{i}$.
\end{coro}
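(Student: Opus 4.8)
The plan is to transfer the finite–dimensional structure theorems of this section back to $X$ through the splitting $X=X_1\oplus X_2$, after first disposing of the two degenerate configurations recorded in the Remark following Corollary \ref{coro5.1}. If $(\hat T(X_1)\oplus Z)\cap{\rm int}(C)=\emptyset$, then Theorem \ref{thm5.1}(i) gives $S^w=A$; since $A=X_1+\hat A$ and $\hat A=\bigcup_{i\in\bar I}\hat A_i$, we get $S^w=\bigcup_{i=1}^mA_i$ with each $A_i$ a polyhedron in $X$ which, by Proposition \ref{pro5.1}(iii), equals $S_i^w$ and is therefore, as a face of itself, a weak Pareto face of (LP)$_{i}$; this proves (i) in that case. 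If $C_Z\neq C\cap(\hat T(X_1)\oplus Z)$, then Corollary \ref{coro5.1}(i) gives $S=\emptyset$ and (ii) holds trivially. From here on I would assume $(\hat T(X_1)\oplus Z)\cap{\rm int}(C)\neq\emptyset$ when treating (i) and $C_Z=C\cap(\hat T(X_1)\oplus Z)$ when treating (ii).

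For (i) in the nondegenerate case, Theorem \ref{thm5.1}(ii) yields $S^w=X_1+\hat S^w$, and Theorem \ref{pro5.2}(ii) supplies finitely many polyhedra $\hat P_1,\dots,\hat P_N$ in $X_2$ and faces $\hat F_1,\dots,\hat F_N$, with $\hat F_l$ a face of some $\hat A_{i(l)}$, such that $\hat S^w=\bigcup_l\hat P_l$ and $\hat P_l\subset\hat F_l\subset\hat S^w_{i(l)}$ for every $l$. I would then put $F_l:=X_1+\hat P_l$. Because ${\rm codim}(X_1)<\infty$ by Theorem \ref{thm3.0} and $\hat P_l$ lies in the finite–dimensional subspace $X_2$, Proposition \ref{pro2.3}(i) makes each $F_l$ a polyhedron in $X$, and clearly $S^w=\bigcup_lF_l$. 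Finally $F_l\subset X_1+\hat F_l$; by Corollary \ref{pro2.4} (with $Z_1=X_1$, $Z_2=X_2$) the set $X_1+\hat F_l$ is an exposed face of $X_1+\hat A_{i(l)}=A_{i(l)}$, and by Proposition \ref{pro5.1}(iv) it lies inside $X_1+\hat S^w_{i(l)}=S^w_{i(l)}$, hence is a weak Pareto face of (LP)$_{i(l)}$ containing $F_l$.

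For (ii) in the nondegenerate case the steps mirror the above. Corollary \ref{coro5.1}(ii) gives $S=X_1+\hat S$, and Proposition \ref{pro5.3} supplies generalized polyhedra $\hat F_1,\dots,\hat F_p$ in $X_2$ together with, for each $k$, a face $\hat G_k$ of some $\hat A_{i(k)}$ satisfying $\hat F_k\subset\hat G_k\subset\hat S_{i(k)}$. Writing $\hat F_k=\hat P_k\cap\{x_2\in X_2:\langle v_{kj}^*,x_2\rangle<t_{kj},\ j\in\overline{1s_k}\}$ with $\hat P_k$ a polyhedron and $v_{kj}^*\in X_2^*$, and precomposing each $v_{kj}^*$ with the (continuous) projection $\Pi_{X_2}:X=X_1\oplus X_2\to X_2$ to obtain $\tilde v_{kj}^*\in X^*$ vanishing on $X_1$, one checks $X_1+\hat F_k=(X_1+\hat P_k)\cap\bigcap_{j}\{x\in X:\langle\tilde v_{kj}^*,x\rangle<t_{kj}\}$; since $X_1+\hat P_k$ is a polyhedron in $X$ by Proposition \ref{pro2.3}(i), the set $F_k:=X_1+\hat F_k$ is a generalized polyhedron in $X$ and $S=\bigcup_kF_k$. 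As in (i), $F_k\subset X_1+\hat G_k$, which is a face of $A_{i(k)}$ by Corollary \ref{pro2.4} and lies in $X_1+\hat S_{i(k)}=S_{i(k)}$ by Proposition \ref{pro5.1}(ii), hence is a Pareto face of (LP)$_{i(k)}$.

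The substantive work is already carried out in Theorem \ref{pro5.2} and Proposition \ref{pro5.3}; the issues left for this corollary are purely structural, namely that adjoining the closed, finite–codimension subspace $X_1$ preserves both the class of polyhedra and the class of generalized polyhedra, and that it carries exposed faces of $\hat A_i$ to exposed faces of $A_i$. The first is immediate for polyhedra from Proposition \ref{pro2.3}, but for generalized polyhedra it requires the short verification above that the strict inequalities can be extended to $X$ via $\Pi_{X_2}$; that is the one spot where I expect to have to spell out a line or two, everything else following directly from the cited results.
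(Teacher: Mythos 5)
Your proposal is correct and follows exactly the route the paper intends: the paper's own "proof" is a single sentence declaring the corollary immediate from Theorem \ref{thm5.1}, Corollary \ref{coro5.1}, Propositions \ref{pro5.1} and \ref{pro5.3}, Theorem \ref{pro5.2} and Corollary \ref{pro2.4}, and your write-up simply supplies the details of that assembly (degenerate cases first, then $F=X_1+\hat F$ with Proposition \ref{pro2.3} and the $\Pi_{X_2}$-extension of the strict inequalities to preserve the polyhedral and generalized-polyhedral classes). No gaps.
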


Corollary \ref{cor4.2} is immediate from Theorem \ref{thm5.1}, Corollary \ref{coro5.1}, Propositions \ref{pro5.1}, \ref{pro5.3} and \ref{pro5.2} and Corollary \ref{pro2.4}.

Dropping the polyhedral assumption on the ordering cone $C$ but imposing the $C$-convexity assumption on $f(A)$, we have the following structure theorem on  the weak Pareto solution set  of (PLP), which generalizes and improves the corresponding result established by Arrow et al. \cite{ABB} in the finite-dimension and linear case.

\begin{them}\label{thm4.2}
Let $C$ be a convex cone in $Y$ such that $f(A)$ is $C$-convex, that is, $f(A)+C$ is a convex subset of $Y$. Then there exist finitely many polyhedra $F_1,\cdots,F_p$ in $X$ satisfying the following properties:\\
(i) $S^w=\bigcup\limits_{k=1}^pF_k$.\\
(ii) For each $k$ there exists $i\in\bar I$ such that $F_k$ is a face of $A_i$ and $F_k\subset S^w_{i}$, where $\bar I:=\{i\in \overline{1m}:\,A_i\not=\emptyset\}$ and $S^w_{i}$ is the weak Pareto solution set of linear subproblem (LP)$_{i}$.\\
Consequently each $F_k$ is just a weak Pareto face of linear subproblem (LP)$_{i}$ for some $i\in\bar I$.
\end{them}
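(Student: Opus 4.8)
The plan is to reduce fully piecewise linear problem (PLP) to the finite dimensional problem ${\rm (\widehat{PLP})}$ by means of Theorems \ref{thm5.1} and \ref{pro5.2}, to push the $C$-convexity of $f(A)$ down to the reduced ordering cone $C_Z$, and then to replace the polyhedral computation used in the proof of Theorem \ref{pro5.2}(ii) by a scalarization (separation) argument that requires only $C_Z$ to be a solid convex cone. First, if $(\hat T(X_1)\oplus Z)\cap{\rm int}(C)=\emptyset$, then Theorem \ref{thm5.1}(i) gives $S^w=A=\bigcup_{i\in\bar I}A_i$; each $A_i=X_1+\hat A_i$ is a polyhedron in $X$ by Proposition \ref{pro2.3}, and $S^w_i=A_i$ by Proposition \ref{pro5.1}(iii), so each $A_i$ is a face of $A_i$ with $A_i\subset S^w_i$ and the conclusion holds with $\{F_k\}=\{A_i:\,i\in\bar I\}$. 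So I assume henceforth that $(\hat T(X_1)\oplus Z)\cap{\rm int}(C)\not=\emptyset$; then $S^w=X_1+\hat S^w$ by Theorem \ref{thm5.1}(ii), and, since $\hat A=\bigcup_{i\in\bar I}\hat A_i$ and $\hat S^w=\hat A\cap\hat f^{-1}({\rm WE}(\hat f(\hat A),C_Z))$, one has $\hat S^w=\bigcup_{i\in\bar I}\breve S_i$ with $\breve S_i:=\hat A_i\cap\hat f^{-1}({\rm WE}(\hat f(\hat A),C_Z))$ (equivalently, by Theorem \ref{pro5.2}(i)).

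Next I would transfer the convexity hypothesis to the reduced data. Since $f(A)=\hat T(X_1)+\hat f(\hat A)$ with $\hat f(\hat A)\subset Z$, one has $f(A)\subset\hat T(X_1)\oplus Z$, hence $\big(f(A)+C\big)\cap(\hat T(X_1)\oplus Z)=f(A)+\big((\hat T(X_1)\oplus Z)\cap C\big)$, which is convex because $f(A)+C$ is. Applying the linear projection $\Pi_Z$ and using $\Pi_Z(f(A))=\hat f(\hat A)$ and $C_Z=\Pi_Z((\hat T(X_1)\oplus Z)\cap C)$ shows that $\hat f(\hat A)+C_Z$ is convex; moreover ${\rm int}_Z(C_Z)\not=\emptyset$ by Lemma \ref{lem5.1}, so $C_Z$ is a solid convex cone in the finite dimensional space $Z$, with nonempty dual cone $C_Z^+$.

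The heart of the argument is the following. Writing $L_i:=\Pi_Z\circ T_i$ and $\beta_i:=\Pi_Z(b_i)$, so that $\hat f=L_i(\cdot)+\beta_i$ on $\hat A_i$, separating the convex set $(\hat f(\hat A)+C_Z)-y$ from the open convex cone $-{\rm int}_Z(C_Z)$ yields the characterization: for $y\in\hat f(\hat A)$, one has $y\in{\rm WE}(\hat f(\hat A),C_Z)$ if and only if $\langle\xi,y\rangle=\inf_{\hat f(\hat A)}\langle\xi,\cdot\rangle$ for some $\xi\in C_Z^+\setminus\{0\}$. Substituting this into $\breve S_i$ and using $\langle\xi,\hat f(x)\rangle=\langle L_i^*\xi,x\rangle+\langle\xi,\beta_i\rangle$ for $x\in\hat A_i$, each nonempty set in the resulting union over $\xi\in C_Z^+\setminus\{0\}$ is of the form $\{x\in\hat A_i:\,\langle x^*,x\rangle=\sup_{\hat A_i}\langle x^*,\cdot\rangle\}$ with $x^*=-L_i^*\xi$, i.e.\ an exposed face of $\hat A_i$; since the polyhedron $\hat A_i$ in the finite dimensional space $X_2$ has only finitely many exposed faces, $\breve S_i$ is the union of finitely many exposed faces $\hat F_{i1},\dots,\hat F_{i\mu_i}$ of $\hat A_i$. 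On each $\hat F_{is}$ the affine function $\langle\xi,\hat f(\cdot)\rangle$ is constant with value $\inf_{\hat f(\hat A)}\langle\xi,\cdot\rangle\le\inf_{\hat f(\hat A_i)}\langle\xi,\cdot\rangle$, whence $\hat f(\hat F_{is})\subset{\rm WE}(\hat f(\hat A_i),C_Z)$ by convexity of $\hat f(\hat A_i)$, that is, $\hat F_{is}\subset\hat S^w_i$. Finally $S^w=X_1+\hat S^w=\bigcup_{i\in\bar I}\bigcup_{s=1}^{\mu_i}(X_1+\hat F_{is})$; by Corollary \ref{pro2.4} each $X_1+\hat F_{is}$ is an exposed face of $X_1+\hat A_i=A_i$, it is a polyhedron in $X$ by Proposition \ref{pro2.3}, and $X_1+\hat F_{is}\subset X_1+\hat S^w_i=S^w_i$ by Proposition \ref{pro5.1}(iv). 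Taking $F_1,\dots,F_p$ to be these finitely many sets proves (i) and (ii), and the concluding assertion follows from the definition of a weak Pareto face.

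The step I expect to be the main obstacle is this last one: the explicit polyhedral manipulations of the proof of Theorem \ref{pro5.2}(ii) are unavailable here, so the finiteness of the decomposition must instead be produced from the separation theorem — once $C$-convexity has been pushed down to $C_Z$-convexity of $\hat f(\hat A)$, the only finiteness actually needed is that each $\hat A_i$ has finitely many exposed faces. Some care is also required in reducing the infimum of $\langle\xi,\cdot\rangle$ over $\hat f(\hat A)+C_Z$ first to that over $\hat f(\hat A)$ and then to a minimum over $\hat A_i$, and in verifying that these extrema are attained precisely on the exposed faces singled out above.
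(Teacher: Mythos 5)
Your proof is correct, but it reaches the conclusion by a genuinely different route from the paper's. The paper proves Theorem \ref{thm4.2} directly in $X$ and $Y$, without any finite-dimensional reduction: it separates the convex set $f(A)+C$ from the open set $f(x)-{\rm int}(C)$ to characterize $S^w$ as $\bigcup_{c^*}S^w(c^*)$ over $c^*\in C^+\setminus\{0\}$, observes that $S^w(c^*)\cap A_i=\{x\in A_{i}:\;\langle T^*_i(c^*),x\rangle=\min_{u\in A_{i}}\langle T^*_i(c^*),u\rangle\}$ is a face of $A_i$ contained in $S^w_i$, and concludes by the finiteness of the set of faces of each $A_i$. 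You instead first descend to the finite-dimensional problem ${\rm (\widehat{PLP})}$ via Theorem \ref{thm5.1}, verify that $C$-convexity of $f(A)$ pushes down to $C_Z$-convexity of $\hat f(\hat A)$ (your identity $(f(A)+C)\cap(\hat T(X_1)\oplus Z)=f(A)+((\hat T(X_1)\oplus Z)\cap C)$ and the projection argument are both sound), scalarize in $Z$, and lift the resulting exposed faces of $\hat A_i$ back through Corollary \ref{pro2.4} and Proposition \ref{pro5.1}(iv). The scalarization-plus-finitely-many-faces mechanism is the same in both arguments; what differs is where it is executed. Your route costs more: it needs the extra case $(\hat T(X_1)\oplus Z)\cap{\rm int}(C)=\emptyset$ (which you handle correctly via Theorem \ref{thm5.1}(i) and Proposition \ref{pro5.1}(iii)), Lemma \ref{lem5.1}, and the convexity-descent step, none of which the paper requires because the Hahn--Banach separation and the finiteness of exposed faces of $A_i$ already hold in the general normed space. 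What it buys is uniformity with the reduction framework of Section 4: your argument makes explicit that the $C$-convex case fits the same $X_1+(\cdot)$ pattern as Theorem \ref{pro5.2} and Proposition \ref{pro5.3}, with the polyhedral computation on $C_Z$ replaced by a separation argument needing only that $C_Z$ is a solid convex cone. One cosmetic remark: your appeal to ``convexity of $\hat f(\hat A_i)$'' when showing $\hat F_{is}\subset\hat S^w_i$ is unnecessary --- minimizing $\langle\xi,\hat f(\cdot)\rangle$ over $\hat A_i$ with $\xi\in C_Z^+\setminus\{0\}$ already forces membership in $\hat S^w_i$ once ${\rm int}_Z(C_Z)\not=\emptyset$.
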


\begin{proof}
Let $x\in A$. Then  $x\in S^w$ if and only if $f(A)\cap (f(x)-{\rm int}(C))=\emptyset$, which is equivalent to $(f(A)+C)\cap(f(x)-{\rm int}(C))=\emptyset$. Thus, by the separation theorem and the convexity of
$f(A)+C$, $x\in S^w$ if and only if
there exists $c^*\in C^+\setminus\{0\}$ such that $\langle c^*,f(x)\rangle=\inf\limits_{u\in A}\langle c^*,f(u)\rangle$.
Let
$$S^w(c^*):=\left\{x\in A:\;\langle c^*,f(x)\rangle=\inf\limits_{u\in A}\langle c^*,f(u)\rangle\right\}\quad\forall c^*\in C^+\setminus\{0\}$$
and $C^+(f,A):=\big\{c^*\in C^*\setminus\{0\}:\;S^w(c^*)\not=\emptyset\big\}$.
Then, since the feasible set $A$ of (PLP) is equal to $\bigcup\limits_{i\in\overline{1m}}A_i$,  one has $S^w=\bigcup\limits_{c^*\in C^+(f,A)}S^w(c^*)=\bigcup\limits_{c^*\in C^+(f,A)}\bigcup\limits_{i\in \Lambda(c^*)}S^w(c^*)\cap A_{i}$,
where $\Lambda(c^*):=\{i\in \bar I:\;S^w(c^*)\cap A_{i}\not=\emptyset\}$.
On the other hand, for $c^*\in C^+(f,A)$ and $i\in \Lambda(c^*)$,
\begin{eqnarray*}
S^w(c^*)\cap A_{i}&=&\{x\in A_{i}:\;\langle c^*,f(x)=\min\limits_{u\in A_{i}}\langle c^*,f(u)\rangle\}\\
&=&\{x\in A_{i}:\;\langle c^*,T_ix+b_i\rangle=\min\limits_{u\in A_{i}}\langle c^*,T_iu+b_i\rangle\}\\
&=&\{x\in A_{i}:\;\langle c^*,T_ix\rangle=\min\limits_{u\in A_{i}}\langle c^*,T_iu\rangle\}\\
&=&\{x\in A_{i}:\;\langle T^*_i(c^*),x\rangle=\min\limits_{u\in A_{i}}\langle T^*_i(c^*),u\rangle\}
\end{eqnarray*}
(thanks to (\ref{wp2}) and (\ref{wp2'})) is a face of $A_{i}$ and a subset of the weak Pareto solution set of linear subproblem (LP)$_{i}$. Since  every polyhedron only has finitely many faces, there exist $c_1^*,\cdots,c_p^*\in C^+(f,A)$ such that
$$S^w=\bigcup\limits_{c^*\in C^+(f,A)}\bigcup\limits_{i\in \Lambda(c^*)}S^w(c^*)\cap A_{i}=\bigcup\limits_{k=1}^p\bigcup\limits_{i\in \Lambda(c_k^*)}S^w(c_k^*)\cap A_{i}.$$
The proof is complete.
\end{proof}

{\bf Remark.}  If $Y=\mathbb{R}$ and $C=\mathbb{R}_+$, then each set in $Y$ is trivially $C$-convex. Moreover, if $f$ is $C$-convex (i.e. ${\rm epi}_C(f)=\{(x,y):\;y\in f(x)+C\}$ is convex) then $f(A)$ is $C$-convex.


\begin{thebibliography}{99}
\bibitem{A} A. D. Alexandrov, Convex Polyhedra, Springer-Verlag, Berlin Heidelberg, 2005.

\bibitem{Ar} P. Armand, Finding all maximal efficient faces in multiobjecture linear programming, Math. Program., 61 (1993),
pp.357-375.
\bibitem{ABB} K. J. Arrow,  E. W. Barankin and D. Blackwell, Admissible points
of convex sets, Contribution to the Theory of Games, Edited by H. W. Kuhn and A. W. Tucker, Princeton
University Press, Princeton, New Jersey, 1953, pp. 87-92.
\bibitem{BS} H. P. Benson and E. Sun, Outcome space partition of the weight set in multiobjecture linear programming, J.
Optim. Theory Appl., 105 (2000), pp. 17-36.

\bibitem{BCL} A. Boneh et al., Note on prime representations
of convex polyhedral sets, J. Optim. Theory Appl., 61 (1989), pp.137-142.

\bibitem{grv} M.D. Fajardo, M.A. Goberna, M.M.L. Rodr\'{i}guez, and J. Vicente-P\'{e}rez, Even Convexity and
Optimization: Handling Strict Inequalities, Springer International Publishing, 2020.

\bibitem{FMY}Y. P. Fang, K. W. Meng, X. Q. Yang, Piecewise linear multi-criteria programs: the continuous case
and its discontinuous generalization. Oper. Res. 60(2012), pp.398-409.

\bibitem{GR} S. I. Gass and P. G. Roy, The compromise hypersphere for multiobjecture linear programming, European J.
Oper. Res., 144 (2003), pp.459-479.

\bibitem{jahn} J. Jahn, Vector Optimization: Theory, Applications and Extensions,
Springer-Verlag 2004.

\bibitem{KTZ} M. H. Karwan, J. Telgen and S. Zionts, Redundancy in Mathematical
Programming, Springer Verlag, New York, New York, 1982.
\bibitem{Luan} N. N. Luan, Piecewise linear vector optimization problems on locally convex Hausdorff topological
vector spaces,  Acta Math. Vietnam, 43(2018), pp.289-308.

\bibitem{LYY} N. N. Luan, J-C. Yao and N. D. Yen, On some generalized polyhedral convex constructions, Numer. Funct. Analy. Optim. 39 (2018), pp.537-570.

\bibitem{Luc} D. T. Luc, Theory of Vector Optimization, Springer-Verlag, Berlin Heidelberg, 1989.

\bibitem{Luc2} D. T. Luc,  Multiobjective Linear Programming-An Introduction, Springer, Cham 2016.
\bibitem{NW} S. Nickel and M. M. Wiecek, Multiple objective programming with piecewise linear functions, J. Multi-Crit. Decis. Anal.,
8 (1999), pp.322-332.



\bibitem{Perez} G. Perez et al., Management of surgical waiting lists through a possibilistic linear multiobjective
programming problem, App. Math. Comput., 167 (2005), pp.477-495.


\bibitem{Ro} R. T. Rockafellar, Convex Analysis, Princeton Univ. Press, Princeton,
New Jersey, 1970.

\bibitem{Ru} W. Rudin, Functional Analysis, McGraw-Hill, New York, 1991.
\bibitem{Te} J. Telgen, Minimal representation of convex polyhedral sets,  J. Optim. Theory Appl., 38 (1982), pp.1-24.

\bibitem{TL} L. V. Thuan and D. T. Luc, On sensitivity in linear multiobjective programming, J. Optim. Theory Appl.,
107 (2000), pp.615-626.

\bibitem{YY} X. Q. Yang and N. D. Yen,  Structure and weak sharp minimum of the pareto solution set for piecewise
linear multiobjective optimization. J. Optim. Theory Appl., 147(2010), pp.113-124.

\bibitem{Ze} M. Zeleny, Linear Multiobjecture Programming, Lecture Notes in Economics and Mathematical Systems, Vol.95,
Springer-Verlag, New York, 1974.
\bibitem{Zh}X. Y. Zheng, Pareto solutions of polyhedral-valued vector optimization problems in Banach spaces.
Set-Valued Var. Anal., 17(2009), pp.389-408.

\bibitem{ZY} X. Y. Zheng and X. Q. Yang, The structure of weak Pareto solution sets in piecewise
linear multiobjective optimization in normed spaces, Sci. in China Ser A,
51 (2008), pp.1243-1256.






\end{thebibliography}
\end{document}